\newcommand{\bbA}{{\mathbb A}}
\newcommand{\bbQ}{{\mathbb Q}}
\newcommand{\bbR}{{\mathbb R}}
\newcommand{\bbZ}{{\mathbb Z}}
\newcommand{\bfH}{{\mathbf H}}
\newcommand{\calV}{\mathcal{V}}
\newcommand{\calO}{\mathcal{O}}
\newcommand{\bs}{\backslash}
\newcommand{\id}{\operatorname{id}}
\newcommand{\im}{\operatorname{im}}
\newcommand{\pr}{\operatorname{pr}}
\newcommand{\Ad}{\operatorname{Ad}}
\newcommand{\Ker}{\operatorname{Ker}}
\newcommand{\SL}{\operatorname{\mathbf{SL}}}
\newcommand{\Aut}{\operatorname{Aut}}
\newcommand{\PGL}{\operatorname{\mathbf{PGL}}}
\newcommand{\GL}{\operatorname{\mathbf{GL}}}
\newcommand{\fin}{{\mathrm{fin}}}
\newcommand{\Commen}{\operatorname{Commen}}
\newcommand{\normal}{\triangleleft}
\newtheorem{theorem}{Theorem}[section]
\newtheorem{lemma}[theorem]{Lemma}
\newtheorem{corollary}[theorem]{Corollary}
\newtheorem{prop}[theorem]{Proposition}
\theoremstyle{definition}
\newtheorem{definition}[theorem]{Definition}
\newtheorem{defn}[theorem]{Definition}
\newtheorem{example}[theorem]{Example}
\newtheorem{remark}[theorem]{Remark}
\numberwithin{equation}{section}
\begin{document}

\title{An adelic arithmeticity theorem for lattices in products}

\author{Uri Bader}
\address{Weizmann Institute, Rehovot}
\email{uri.bader@gmail.com}

\author{Alex Furman}
\address{University of Illinois at Chicago, Chicago}
\email{furman@uic.edu}

\author{Roman Sauer}
\address{Karlsruhe Institute of Technology}
\email{roman.sauer@kit.edu}

\thanks{U.B. and A.F. were supported in part by the BSF grant 2008267.}
\thanks{A.F. was supported in part by the NSF grants DMS 1611765.}


\maketitle

\begin{abstract}
In this paper we prove that, under mild assumptions, a lattice in a product of semi-simple Lie group and a totally disconnected locally compact group is, in a certain sense, arithmetic.
We do not assume the lattice to be finitely generated or the ambient group to be compactly generated. 
\end{abstract}


\section{Introduction and statement of the main result} 
\label{sec:introduction_and_statement_of_the_main_result}

\subsection{Motivation}\label{sub: intro-motivation}
Our goal in this paper is to show that, under mild conditions, a lattice~$\Gamma$ in a product of a semi-simple Lie group $H$ and a totally disconnected locally compact group $D$ is -- what we will call -- \emph{arithmetically constructed}. Under the assumption that $\Gamma<H\times D$ is irreducible and $\Gamma$ is finitely generated or $D$ is compactly generated such arithmeticity results  are obtained by Caprace-Monod in~\cite{Caprace+Monod:I}. In this work we drop these assumptions and obtain an adelic arithmeticity result for lattices in $H\times D$. An application will appear in~\cites{BFS:envelopes}. 

The exact formulation of the main result is given in Theorem~\ref{T:arithmetic-core} below.
Before formally stating it we will provide some motivation and an idea 
of what \emph{arithmetically constructed} means. 
A prototypical example is the lattice embedding
\[
	\PGL_2(\bbQ)<\PGL_2(\mathbb{A})\cong \PGL_2(\bbR)\times{\prod_{p}}'\PGL_2(\bbQ_p).
\]
Here the factor $\PGL_2(\bbR)$ is a connected, center-free semi-simple, real Lie group. Further, $\bbA$ denotes the ring of adeles. By $\prod'$ we denote restricted products in the sense of adeles: for almost all places $p$ the corresponding element lies in $\PGL_2(\bbZ_p)$.
A similar example is 
\[
\SL_2(\bbQ)<\SL_2(\mathbb{A})\cong \SL_2(\bbR)\times{\prod_{p}}'\SL_2(\bbQ_p).
\]
Despite the similarity, note that the image of $\SL_2(\bbQ)$ in $\PGL_2(\mathbb{A})$ under the obvious map
\[
	\SL_2(\mathbb{Q})\to \SL_2(\mathbb{A}) \to \PGL_2(\mathbb{A})
\]
is not a lattice. Indeed, it is of infinite index in the lattice $\PGL_2(\bbQ)$, as can be seen by the exact sequence
\[
 	1 \to \{+1,-1\} \to  \SL_2(\mathbb{Q}) \to \PGL_2(\bbQ) \to \bbQ^*/(\bbQ^*)^2 \to 1. 
\]
Intermediate subgroups between $\PGL_2(\bbQ)$ and the image of $\SL_2(\bbQ)$  are arithmetically constructed lattices in the 
sense of Definition~\ref{def:arithcon}. A special case of our main result identifies, under mild conditions, an arbitrary lattice in $\PGL_2(\bbR)\times D$ with an arithmetically constructed lattice. 

\subsection{Notation and basic setup}\label{sub: intro-notation}
Throughout, let $K$ be a number field with ring of integers $\calO$.
Let $\calV$ be the set of inequivalent valuations (places) of $K$, 
let $\calV^\infty$ denote the archimedean ones, and $\calV^\fin=\calV-\calV^\infty$ 
the non-archimedean ones (finite places). 
Given a subset $S\subset \calV$ we set
$S^\infty=S\cap \calV^\infty$ and $S^\fin=S\cap \calV^\fin$. 
We let 
\[
	\calO[S]:=\{x\in K \mid |x|_\nu\le 1\ \textrm{for\ all}\ \nu\in\calV^\fin-S\}
\] 
be the ring of $S$-integers in $K$.
For a semi-simple $K$-algebraic group $\mathbf{H}$ we denote 
by $\mathbf{H}_{S}$ the (truncated) $S$-adeles of $\mathbf{H}$, that is, the restricted product 
\begin{equation*}
	\mathbf{H}_{S}:=\bigl\{ (x_\nu)\in \prod_{\nu\in S}\mathbf{H}(K_\nu)\mid 
	x_\nu\in \mathbf{H}(\calO_\nu) \text{ for almost all $\nu\in S$}\bigr\}. 
\end{equation*}
We let  $\mathbf{H}_S^\#$ be the closure in $\mathbf{H}_{S}$ of the image of $\mathbf{H}(K)$ under the diagonal embedding.

For a field extension $k$ of $K$ we denote by $\mathbf{H}(k)^+<\mathbf{H}(k)$ the normal 
subgroup as defined in~\cite{borel+tits-abstract}*{Section~6}. 
Since $k$ is perfect, $\mathbf{H}(k)^+$ is the subgroup generated by all 
unipotent elements and it has finite index in $\mathbf{H}(k)$. 
If $\nu\in \calV^\infty$, then 
$\mathbf{H}(K_\nu)^+$ is just the connected component of the identity in the real Lie group $\mathbf{H}(K_\nu)$. 
We denote by $\mathbf{H}_{S}^+$ the restricted product 
\begin{equation*}
	\mathbf{H}_{S}^+:=\bigl\{ (x_\nu)\in \prod_{\nu\in S}\mathbf{H}(K_\nu)^+\mid 
	x_\nu\in \mathbf{H}(\calO_\nu) \text{ for almost all $\nu\in S$}\bigr\}. 
\end{equation*}
Note that if $S$ is finite, then 
\begin{equation*}
	\mathbf{H}_{S}=\prod_{\nu\in S}\mathbf{H}(K_\nu), \qquad
	\mathbf{H}_{S}^+=\prod_{\nu\in S}\mathbf{H}(K_\nu)^+.	
\end{equation*}
In this case
$\mathbf{H}_S^+<\mathbf{H}_S$ has 
finite index. In general, this is false for infinite~$S$.

\begin{defn} \label{def:compatible}
In the above context $S$ is said to be \emph{compatible} with $\mathbf{H}$ if the following hold: 
\begin{enumerate}
\item For every $\nu\in S$, $\mathbf{H}$ is $K_\nu$-isotropic.
\item $S$ contains all $\nu\in\calV^\infty$ for which $\mathbf{H}$ is $K_\nu$-isotropic. 
\item $S$ contains at least one finite and one infinite place. 
\end{enumerate}
\end{defn}

\subsection{Arithmetically constructed lattices}\label{sub: intro-arithmeticlattices}

The definition of arithmetically constructed lattices builds on the following proposition which we prove in Section~\ref{sec: arithmetic lattices}. 

\begin{prop}\label{ex:lattices}
For every $i\in\{1,\dots,n\}$ let $K_i$ be a number field, 
$\mathbf{H}_i$ be a connected, non-commutative, absolutely simple, adjoint $K_i$-group, and $S_i\subset \calV_i$ be a (possibly infinite) subset of places 
of $K_i$ which is compatible with $\bfH_i$. Then the following hold. 

\begin{enumerate}
\item 	$(\mathbf{H}_i)_{S_i^\fin}^+$ is an open subgroup of $(\mathbf{H}_i)_{S_i^\fin}^\#$ for every $i\in\{1,\dots,n\}$. 
\item For every subgroup 
\[A<\prod_{i=1}^n \left((\mathbf{H}_i)_{S_i^\fin}^\#/ (\mathbf{H}_i)_{S_i^\fin}^+\right)\]
let $D_A<\prod_{i=1}^n (\mathbf{H}_i)_{S_i^\fin}^\#$ be the preimage of $A$ under the quotient map, and let $\Gamma_A< \prod_{i=1}^n  \mathbf{H}_i(\calO_i[S_i])$ be the preimage of $D_A$ under 
the product of diagonal embeddings $\mathbf{H}_i(\calO_i[S_i])\to(\mathbf{H}_i)_{S_i^\fin}^\#$. 
The via the diagonal embedding 
\[ 
	\Gamma_A < \prod_{i=1}^n (\mathbf{H}_i)_{S_i^\infty}\times D_A 
\]
is a lattice.
\end{enumerate}
\end{prop}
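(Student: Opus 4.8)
The plan is to reduce everything to the classical $S$-arithmeticity of $\mathbf{H}_i(\calO_i[S_i])$ in $(\mathbf{H}_i)_{S_i}$ and then track how the various finite-index-type subgroups interact. For part (1), the key input is strong approximation: since each $\mathbf{H}_i$ is absolutely simple, simply connected... wait — it is adjoint, not simply connected, so strong approximation must be applied to the simply connected cover $\widetilde{\mathbf{H}}_i$. One uses that $\widetilde{\mathbf{H}}_i$ is $K_i$-isotropic at every place of $S_i$ (compatibility), so by the Kneser–Platonov strong approximation theorem the diagonal image of $\widetilde{\mathbf{H}}_i(K_i)$ is dense in $\widetilde{\mathbf{H}}_i{}_{,S_i^{\fin}}$. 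Pushing forward along the central isogeny $\widetilde{\mathbf{H}}_i\to\mathbf{H}_i$, whose image on $K_\nu$-points is exactly $\mathbf{H}_i(K_\nu)^+$, one gets that $\prod'_{\nu\in S_i^\fin}\mathbf{H}_i(K_\nu)^+$ — which is $(\mathbf{H}_i)_{S_i^\fin}^+$ — is contained in the closure $(\mathbf{H}_i)_{S_i^\fin}^\#$. Openness is then the statement that $\mathbf{H}_i(K_\nu)^+$ is open in $\mathbf{H}_i(K_\nu)$ for each $\nu$ (true since it has finite index and is a union of cosets, or directly because it is the image of an open map on $\widetilde{\mathbf{H}}_i(K_\nu)$ together with the profinite structure at almost all places where $\mathbf{H}_i(\calO_\nu)\subset\mathbf{H}_i(K_\nu)^+$), so a restricted-product neighborhood basis of the identity already lies inside $(\mathbf{H}_i)_{S_i^\fin}^+$.

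For part (2), first handle a single factor and finite $S_i$ is not assumed, so one works adelically. The classical Borel–Harish-Chandra / Behr–Harder theorem gives that $\mathbf{H}_i(\calO_i[S_i])$, embedded diagonally, is a lattice in $(\mathbf{H}_i)_{S_i}=(\mathbf{H}_i)_{S_i^\infty}\times(\mathbf{H}_i)_{S_i^\fin}$; here absolute simplicity and non-commutativity guarantee no $K_i$-characters, hence the lattice is even cocompact or finite-covolume as appropriate, and in any case a lattice. Since $(\mathbf{H}_i)_{S_i^\fin}^\#$ is by definition the closure of the image of $\mathbf{H}_i(K_i)$, and $\mathbf{H}_i(\calO_i[S_i])$ is commensurable-dense in that image, the subgroup $(\mathbf{H}_i)_{S_i^\infty}\times(\mathbf{H}_i)_{S_i^\fin}^\#$ is a closed subgroup of $(\mathbf{H}_i)_{S_i}$ containing the lattice; standard facts on lattices in closed subgroups (a lattice contained in a closed subgroup $L$ of $G$ forces $L$ to have finite covolume quotient, and here the quotient $(\mathbf{H}_i)_{S_i^\fin}/(\mathbf{H}_i)_{S_i^\fin}^\#$ is discrete by part (1)'s openness statement, hence must be finite... or at least the relevant quotient is compact) show $\mathbf{H}_i(\calO_i[S_i])$ is a lattice in $(\mathbf{H}_i)_{S_i^\infty}\times(\mathbf{H}_i)_{S_i^\fin}^\#$. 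Taking products over $i$ preserves the lattice property.

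Finally, pass to $\Gamma_A$ and $D_A$. Here $D_A\supset\prod_i(\mathbf{H}_i)_{S_i^\fin}^+$ is an open subgroup of $\prod_i(\mathbf{H}_i)_{S_i^\fin}^\#$ by part (1), of index $[\,\prod_i(\mathbf{H}_i)_{S_i^\fin}^\#/(\mathbf{H}_i)_{S_i^\fin}^+ : A\,]$, and $\Gamma_A$ is the corresponding subgroup of $\prod_i\mathbf{H}_i(\calO_i[S_i])$ mapping into $D_A$. One checks that $\Gamma_A$ is a lattice in $\prod_i(\mathbf{H}_i)_{S_i^\infty}\times D_A$ by a pull-back diagram argument: there is a commutative square whose total space is $\prod_i(\mathbf{H}_i)_{S_i^\infty}\times\prod_i(\mathbf{H}_i)_{S_i^\fin}^\#$ with its lattice $\prod_i\mathbf{H}_i(\calO_i[S_i])$, and $\prod_i(\mathbf{H}_i)_{S_i^\infty}\times D_A$ is the preimage of the open subgroup $D_A$; since $D_A$ is open, $\prod_i(\mathbf{H}_i)_{S_i^\infty}\times D_A$ is open in the ambient group, and an open subgroup containing a (suitable conjugate-saturated piece of the) lattice inherits a lattice, namely the intersection, which is exactly $\Gamma_A$. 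The one subtlety — and the main obstacle — is that openness alone gives that $\Gamma_A$ is a \emph{discrete} subgroup of finite covolume in $\prod_i(\mathbf{H}_i)_{S_i^\infty}\times D_A$ \emph{provided} this open subgroup has finite index, or at least finite covolume, in $\prod_i(\mathbf{H}_i)_{S_i}$; since $A$ may be an arbitrary subgroup of the quotient, $D_A$ need not be finite index, so one must instead argue directly that the covolume of $\Gamma_A$ in $\prod_i(\mathbf{H}_i)_{S_i^\infty}\times D_A$ equals $[\,\prod_i(\mathbf{H}_i)_{S_i^\fin}^\#/(\mathbf{H}_i)_{S_i^\fin}^+:A\,]$ times a fixed finite quantity coming from part (2) divided by... — more cleanly, one fixes a Haar measure, notes $\Gamma_A$ still has a fundamental domain obtained by taking the fundamental domain for $\prod_i\mathbf{H}_i(\calO_i[S_i])$ in $\prod_i(\mathbf{H}_i)_{S_i^\infty}\times(\mathbf{H}_i)_{S_i^\fin}^\#$ and intersecting with the open set $\prod_i(\mathbf{H}_i)_{S_i^\infty}\times D_A$ after spreading by coset representatives of $\Gamma_A$ in $\prod_i\mathbf{H}_i(\calO_i[S_i])$; since there are exactly $[\text{quotient}:A]$ such cosets, finiteness of that index — which holds because $A$ is a subgroup of a group that is \emph{locally finite}? no — is \emph{not} automatic. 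So the genuinely careful point is: the fundamental-domain for $\Gamma_A$ in $\prod_i(\mathbf{H}_i)_{S_i^\infty}\times D_A$ is a \emph{translate} of a subset of the fundamental domain for the bigger lattice in the bigger group, and it has the \emph{same} measure because the index of $\Gamma_A$ in $\prod_i\mathbf{H}_i(\calO_i[S_i])$ equals the index of $D_A$ in $\prod_i(\mathbf{H}_i)_{S_i^\fin}^\#$ (the diagonal embedding being injective with dense image makes these indices match), so the measure of the fundamental domain is unchanged and finite. Discreteness of $\Gamma_A$ is immediate from discreteness of $\prod_i\mathbf{H}_i(\calO_i[S_i])$, and closedness of $\prod_i(\mathbf{H}_i)_{S_i^\infty}\times D_A$ from openness of $D_A$. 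This completes the argument.
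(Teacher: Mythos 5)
Your overall skeleton for part (2) — reduction theory for $\prod \mathbf{H}_i(\calO_i[S_i])$ in $\prod(\mathbf{H}_i)_{S_i}$, then descending to the intermediate closed subgroup $\prod(\mathbf{H}_i)_{S_i^\infty}\times\prod(\mathbf{H}_i)_{S_i^\fin}^\#$, then restricting to the open subgroup $\prod(\mathbf{H}_i)_{S_i^\infty}\times D_A$ — is exactly the paper's route. But there are two genuine gaps.

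First, in part (1) your openness argument rests on the claim that $\mathbf{H}_i(\calO_\nu)\subset\mathbf{H}_i(K_\nu)^+$ at almost all places, so that a restricted-product neighborhood of the identity lies in $(\mathbf{H}_i)_{S_i^\fin}^+$. This is false for adjoint groups: already for $\mathbf{H}=\PGL_2$ over $\bbQ$ the group $\PGL_2(\bbZ_p)$ surjects onto $\bbZ_p^*/(\bbZ_p^*)^2\neq 1$ under the determinant, while $\PGL_2(\bbQ_p)^+=\ker(\det)$, so $\mathbf{H}(\calO_\nu)\not\subset\mathbf{H}(K_\nu)^+$ at \emph{every} odd place. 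Consequently $\mathbf{H}^+_{S^\fin}$ is \emph{not} open in $\mathbf{H}_{S^\fin}$ when $S^\fin$ is infinite (it has infinite index there); what is true, and what the proposition asserts, is that it is open in the smaller group $\mathbf{H}^\#_{S^\fin}$. That statement is genuinely global: one must show that $\mathbf{H}^+_{S^\fin}\cap U$ has \emph{finite index} in $\mathbf{H}^\#_{S^\fin}\cap U$ for $U=\prod_\nu\mathbf{H}(\calO_\nu)$, which the paper does by combining the commensurability of $i(\tilde{\mathbf{H}}(\calO))$ with $\mathbf{H}(\calO)$ and the density of $\mathbf{H}(\calO)$ in $\mathbf{H}^\#_{S^\fin}\cap U$ (Lemma 3.5(3)). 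Your local argument cannot see this, so part (1) is not established.

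Second, in the last step of part (2) you worry that intersecting the lattice with the open subgroup $\prod(\mathbf{H}_i)_{S_i^\infty}\times D_A$ only works if that subgroup has finite index or finite covolume, and you then try to repair this with a fundamental-domain count hinging on the claim that $[\prod\mathbf{H}_i(\calO_i[S_i]):\Gamma_A]=[\prod(\mathbf{H}_i)^\#_{S_i^\fin}:D_A]$. That equality is unjustified (it would require the image of $\prod\mathbf{H}_i(\calO_i[S_i])$ to meet every coset of $D_A$, which you do not prove), and in any case the whole detour is unnecessary: it is a standard fact (Lemma 2.3 of the paper, cf.\ Caprace--Monod) that if $\Gamma<G$ is a lattice and $U<G$ is \emph{any} open subgroup, then $\Gamma\cap U$ is a lattice in $U$ — no finite-index or finite-covolume hypothesis on $U$ is needed (the $U$-orbit of the basepoint in $G/\Gamma$ is open, hence of positive finite measure, and is $U$-equivariantly $U/(\Gamma\cap U)$). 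Since $\Gamma_A$ is by construction the intersection of $\prod\mathbf{H}_i(\calO_i[S_i])$ with $\prod(\mathbf{H}_i)_{S_i^\infty}\times D_A$, this lemma finishes the proof directly once part (1) is in place.
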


\begin{defn} \label{def:arithcon}
A subgroup of $\prod_{i=1}^n \mathbf{H}_i(K_i)$ is an \emph{arithmetically constructed lattice} if it contains the group $\Gamma_A$ from Proposition~\ref{ex:lattices} as a subgroup of finite index for some subgroup 
\[A<\prod_{i=1}^n \left((\mathbf{H}_i)_{S_i^\fin}^\#/ (\mathbf{H}_i)_{S_i^\fin}^+\right).\]
\end{defn}

\begin{example}
	We continue the example from Subsection~\ref{sub: intro-motivation}. Here $K=K_1=\bbQ$, $n=1$, $\mathbf{H}=\mathbf{H}_1=\PGL_2$ and $S^\fin=S_1^\fin$ consists of all finite primes. Let $\bbA_\fin=\prod'_{p\ne\infty} \bbQ_p$ 
 be the ring of finite rational adeles. 
		The determinant 
	morphism $\det\colon \mathbf{G}_m(\bbA_\fin)\to \bbA_\fin^\ast/(\bbA_\fin^\ast)^2$ modulo squares descends to a morphism from 
	$\PGL_2(\bbA_\fin)$. The group $\bbA^\ast_\fin$ of ideles is the subset of $(x_p)\in \prod_{p\ne \infty} \bbQ_p^\ast$ such that $x_p\in \bbZ_p^\ast$ for almost all primes~$p$. Identifying the $p$-adic units $\bbQ_p^\ast$ with $\bbZ_p^\ast\times\bbZ$ we obtain an isomorphism 
	\[ \mathbf{G}_m(\bbA_\fin)\cong \prod_{p\ne \infty}\bbZ_p^\ast \times \bigoplus_{p\ne\infty} \bbZ\]
	of topological groups and a commutative diagram of topological 
	groups: 
	\[
	\begin{tikzcd}
          \GL_2(\bbA_\fin)\arrow[r, equal]\arrow[d] &\prod'\GL_2(\bbQ_p)\arrow[r, "\det"] & \prod_{p\ne \infty}\bbZ_p^\ast \times \bigoplus_{p\ne\infty} \bbZ\arrow[d, twoheadrightarrow] \\
          \PGL_2(\bbA_\fin)\arrow[r, equal] &\prod'\PGL_2(\bbQ_p) \arrow[r, "\det"] &\prod_{p\ne \infty}(\bbZ_p^\ast)/(\bbZ_p^\ast)^2 \times \bigoplus_{p\ne\infty} \bbZ/2\bbZ
    \end{tikzcd}
    \]
    In this example we have 
    \begin{align*} 
           \mathbf{H}_{S^\fin}^\# &= \operatorname{det}^{-1}\Bigl(\{1\}\times \bigoplus_{p\ne \infty} \bbZ/2\bbZ\Bigr)\\
           \mathbf{H}_{S^\fin}^+ &=\ker\det.
    \end{align*}
	Thus the quotient $\mathbf H_{S^\fin}^\#/\mathbf H_{S^\fin}^+$ appearing in the previous definition is here isomorphic to 
	$\bigoplus_{p\ne \infty} \bbZ/2\bbZ$. 
\end{example}

\subsection{Main result}
The main goal of this paper is to establish the following partial converse of Proposition~\ref{ex:lattices} which is an arithmeticity theorem for lattices in products. 
We use the abbreviation \emph{tdlc} for totally disconnected locally compact groups.

\begin{theorem}\label{T:arithmetic-core}\hfill{}\\
		Let $H$ be a connected, center-free, semi-simple, real Lie group 
		without compact factors,
		$D$ be a tdlc group and $\Gamma<H\times D$ be a lattice. Assume that 
		\begin{enumerate}
			\item the projection $\Gamma\to D$ has a dense image,
			\item the projection $\Gamma\to H$ has a dense image,
			\item the projection $\Gamma\to H$ is injective.
		\end{enumerate}
		Then there exist number fields $K_1,\dots,K_n$,  
		connected, non-commutative, absolutely simple, adjoint $K_i$-groups $\mathbf{H}_i$, 
		and (possibly infinite) compatible 
		subsets $S_i\subset \calV_i$ of places of $K_i$ (see Definition~\ref{def:compatible}) 
		with the following properties: 
		There is a topological isomorphism
		\begin{equation*}
			H \cong \prod_{i=1}^n (\mathbf{H}_i)_{S_i^\infty}^0
		\end{equation*}
		and a short exact sequence $1\to C\to D\to Q\to 1$ of
		topological groups, where $C$ is compact, and $Q$ is a closed intermediate subgroup  
		\[
			\prod_{i=1}^n (\mathbf{H}_i)_{S_i^\fin}^+<Q<\prod_{i=1}^n (\mathbf{H}_i)_{S_i^\fin}
		\]
		so that the inclusion $\Gamma< H \times D$ 
		is contained in the image of 
		\[
			\prod_{i=1}^n \mathbf{H}_i(K_i)\to \prod_{i=1}^n (\mathbf{H}_i)_{S_i},
		\]
		and, identifying $\Gamma$ with a subgroup of $\prod_{i=1}^n \mathbf{H}_i(K_i)$, 
		$\Gamma$ is an arithmetically constructed lattice as in Definition~\ref{def:arithcon}.
		
		Furthermore, compact generation of $D$, finite generation of $\Gamma$,
		and finiteness of all $S_i\subset \calV_i$, are equivalent conditions.
		If they hold, each $(\mathbf{H}_i)_{S_i}^+$ has finite index in $(\mathbf{H}_i)_{S_i}$, 
		so up to commensurability $\Gamma$ is the product of $S_i$-arithmetic lattices
		$\prod_{i=1}^n \mathbf{H}_i(\calO_i[S_i])$
		diagonally embedded in 
		\[
			H\times D/C\simeq \prod_{i=1}^n\mathbf{H}_{S_i}\simeq \prod_{i=1}^n\mathbf{H}_{S_i}^+
		\]
		where $\simeq$ means being commensurable. 
\end{theorem}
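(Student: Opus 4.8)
The plan is to combine the rigidity of lattices in products, in the boundary-theoretic form developed by Bader and Furman, with the structure theory of tdlc groups; the global arithmetic data will be reconstructed from the action of $\Gamma$ on the compact open subgroups of $D$ together with superrigidity for the resulting algebraic representations of $\Gamma$. First I would record a commensurated-lattice structure. Since $H$ is connected, center-free and semi-simple without compact factors, decompose it into its simple factors, and, using hypothesis~(3), identify $\Gamma$ with the dense subgroup $\Lambda:=\pr_H(\Gamma)<H$, so that the remaining datum is a dense homomorphism $\Lambda\to D$ whose graph is the given lattice. Fix, by van Dantzig's theorem, a neighbourhood basis of $1\in D$ consisting of compact open subgroups $U$. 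For each such $U$ the subgroup $\Gamma_U:=\Gamma\cap(H\times U)$ is a lattice in the open subgroup $H\times U$, and since $U$ is compact its projection $\Lambda_U:=\pr_H(\Gamma_U)$ is a lattice in $H$; moreover any two compact open subgroups of $D$ are commensurable, so the family $\{\Lambda_U\}$ is permuted up to commensurability by $\Lambda$. Thus $\Gamma$ is realised as a dense subgroup of $H$ commensurating a cofinal coherent system of lattices, the system itself encoding $D$.

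The heart of the proof is superrigidity and the field of definition. Because $\Gamma$ is a lattice in the product $H\times D$ with dense projections, it is superrigid even when the simple factors of $H$ have rank one, the factor $D$ playing the role of an auxiliary higher-rank factor: using a Furstenberg boundary of $H$ as a boundary for $\Gamma$ and the Bader--Furman theory of algebraic representations of ergodic actions, every Zariski-dense unbounded algebraic representation of $\Gamma$ over a local field essentially extends to the appropriate factor of $H\times D$, so in particular the action of $\Lambda$ on $H$ and the actions on the coset spaces $D/U$ are algebraic. Assembling the archimedean data by a Galois-descent argument produces number fields $K_1,\dots,K_n$, connected non-commutative absolutely simple adjoint $K_i$-groups $\mathbf{H}_i$, and a topological isomorphism $H\cong\prod_{i=1}^n(\mathbf{H}_i)_{S_i^\infty}^0$, with $\Lambda$ commensurable to a group of $S$-integral points; the sets $S_i$ are obtained by adjoining to $S$ the finite places at which $D$ is ``supported''. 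Isotropy of $\mathbf{H}_i$ at the places of $S_i$ is forced by density of the projection to $H$; discarding any factor for which no finite place occurs --- which would make $\Lambda$ discrete in the corresponding factor of $H$, contradicting hypothesis~(2) --- one obtains that each $S_i$ is compatible with $\mathbf{H}_i$ in the sense of Definition~\ref{def:compatible}.

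Next I would reconstruct $D$ and recognise $\Gamma$. By the same superrigidity, the $\Gamma$-action on $\{\Gamma_U\}$ agrees with its action on the compact open subgroups of the truncated finite-adelic group $\prod_{i=1}^n(\mathbf{H}_i)_{S_i^\fin}$; passing to the inverse limit over $U$ yields a continuous homomorphism $D\to\prod_{i=1}^n(\mathbf{H}_i)_{S_i^\fin}$ whose kernel $C$ acts trivially on every $D/U$ and is therefore compact, and whose image $Q$ is closed and, by strong approximation for the isotropic $\mathbf{H}_i$, contains $\prod_{i=1}^n(\mathbf{H}_i)_{S_i^\fin}^+$. Composing the two identifications carries $\Gamma\hookrightarrow H\times D$ into the diagonal $K_i$-points inside $\prod_{i=1}^n(\mathbf{H}_i)_{S_i}$; letting $A$ be the image of $\Gamma$ in $\prod_{i=1}^n\bigl((\mathbf{H}_i)_{S_i^\fin}^\#/(\mathbf{H}_i)_{S_i^\fin}^+\bigr)$, Proposition~\ref{ex:lattices} shows that $\Gamma_A$ is a lattice contained in $\Gamma$, and a covolume comparison gives $[\Gamma:\Gamma_A]<\infty$, so $\Gamma$ is an arithmetically constructed lattice as in Definition~\ref{def:arithcon}. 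Finally, within this arithmetic picture an infinite restricted product of non-compact groups is never compactly generated and an $S$-arithmetic group with $S$ infinite is never finitely generated, so finiteness of all $S_i$ is equivalent both to compact generation of $D$ (equivalently of $Q$, modulo the compact $C$) and to finite generation of $\Gamma$; when the $S_i$ are finite one has $(\mathbf{H}_i)_{S_i}=\prod_{\nu\in S_i}\mathbf{H}_i(K_{i,\nu})$ with $\mathbf{H}_i(K_{i,\nu})^+$ of finite index in $\mathbf{H}_i(K_{i,\nu})$, whence $(\mathbf{H}_i)_{S_i}^+$ has finite index in $(\mathbf{H}_i)_{S_i}$, and strong approximation identifies $\prod_{i=1}^n\mathbf{H}_i(\calO_i[S_i])$, diagonally embedded, with a lattice commensurable to $\Gamma$ in $H\times D/C\simeq\prod_{i=1}^n(\mathbf{H}_i)_{S_i}\simeq\prod_{i=1}^n(\mathbf{H}_i)_{S_i}^+$.

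The main obstacle is carrying out the superrigidity step with no finite-generation hypothesis on $\Gamma$, so that Margulis's classical arithmeticity argument --- which pins down the field of definition through finitely many matrix entries of a generating set --- is unavailable; one has to produce the algebraic representations purely ergodic-theoretically and then perform the local-to-global passage recognising the possibly infinitely many local factors of $H\times D$ as completions of finitely many number-field groups. Controlling the compact kernel $C$ and the exact position of $Q$ between $\prod_{i=1}^n(\mathbf{H}_i)_{S_i^\fin}^+$ and $\prod_{i=1}^n(\mathbf{H}_i)_{S_i^\fin}$ via strong approximation is the secondary technical difficulty.
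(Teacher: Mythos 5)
Your opening step coincides with the paper's: choosing a compact open $U<D$, observing that $\Delta=\pr_H(\Gamma\cap(H\times U))$ is a lattice in $H$ commensurated by the dense subgroup $\Lambda=\pr_H(\Gamma)$. From there, however, your argument diverges at exactly the point where it matters most, and the divergence leaves a genuine gap. You propose to obtain the arithmetic data by running boundary-theoretic superrigidity on $\Gamma$ itself, viewed as an irreducible lattice in the product $H\times D$, and you then concede that ``the main obstacle is carrying out the superrigidity step with no finite-generation hypothesis on $\Gamma$'' --- without resolving it. The paper never faces this obstacle, because it does not apply any superrigidity to $\Gamma$. Instead it decomposes $\Delta$ into irreducible pieces $\Delta_i<H_i$ and applies Margulis' \emph{commensurator} criterion for arithmeticity (Theorem~\ref{thm: margulis lemma}) to each $\Delta_i$: this is a lattice in a semi-simple real Lie group with no tdlc factor, hence finitely generated by classical results, and it is commensurated by $\Lambda_i$ with $[\Lambda_i:\Delta_i]=\infty$ because $\Lambda_i$ is dense. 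All the number-theoretic data ($K_i$, $\mathbf{H}_i$, $S_i$, $\Sigma_i<\mathbf{H}_i(K_i)$) is extracted from this finitely generated object; the possible non-finite-generation of $\Gamma$ is simply irrelevant to this step. Your route would require a from-scratch superrigidity and Galois-descent argument for non-finitely-generated lattices in $H\times D$, which is precisely the heavy machinery the paper is structured to avoid.

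A second, related gap is the construction of the map $D\to\prod_i(\mathbf{H}_i)_{S_i^\fin}$. You assert that ``passing to the inverse limit over $U$'' of the identification of the actions on $D/U$ with algebraic data yields the desired continuous homomorphism, but you give no mechanism for matching the abstract permutation actions $\Gamma\curvearrowright D/U$ with compact open subgroups of the adelic group. The paper's mechanism is the terminal-object property of dense tdlc representations of the commensurability pair $(\Sigma,C)$ (Theorem~\ref{thm:terminal} and Lemma~\ref{lem:terminal}): the closure $Q$ of $\Sigma$ in $\prod_i(\mathbf{H}_i)_{S_i^\fin}$ has no nontrivial compact normal subgroup, hence $\Sigma\to Q$ is terminal, and the dense representation $\Gamma\to D$ therefore maps to it uniquely, with the compact kernel supplied by Lemma~\ref{lem:closedimage}. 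This categorical step is the actual content of the third stage of the proof and cannot be replaced by an unspecified inverse limit. Finally, two smaller points: containment of $\prod_i(\mathbf{H}_i)_{S_i^\fin}^+$ in $Q$ needs more than strong approximation (which only gives an open subgroup of $\mathbf{H}^+$); one also needs unboundedness at each place together with the Prasad/Bruhat--Tits--Rousseau theorem, as in Theorem~\ref{thm:algsetting}(1). And in your recognition of $\Gamma$ as arithmetically constructed, the nontrivial direction is the containment $\Gamma_A<\Gamma$ (not the index bound), which in the paper is handled by Lemma~\ref{lem:subthmcore} using that $\prod_i(\mathbf{H}_i)_{S_i^\fin}^+$ has no proper closed finite-index subgroup.
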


\begin{remark}
This result is an important ingredient in our work on lattice envelopes \cite{BFS:envelopes} 
(see also the announcement \cite{BFS:CRAS}).
In this application finite generation and irreducibility of $\Gamma$ are not known a priori.

For finitely generated $\Gamma$ or compactly generated $D$, such  results are proved by Caprace and Monod~\cite[Section~5]{Caprace+Monod:I}. 
Our approach is different except for step (i) below 
that uses Margulis' commensurator criterion for arithmeticity.

An adelic analog of Margulis' arithmeticity where ones assumes the ambient group of algebraic origin is proved by Oh~\cite{oh-adelic}. 
\end{remark}

\subsection{Strategy of proof}
The proof of Theorem~\ref{T:arithmetic-core} can be roughly divided into the following steps: 

\begin{enumerate}
	\item
	Choosing a compact open subgroup $U<D$, we observe that the projection $\Delta$ of $\Gamma\cap (H\times U)$ 
	to $H$ is a lattice in $H$, commensurated by the projection $\Lambda$ of all of $\Gamma$ to $H$.
	Then Margulis' commensurator superrigidity theorem provides the arithmetic structure of $\Delta$
	yielding the algebraic groups $\mathbf{H}_1,\dots,\mathbf{H}_n$, 
	the product of which defines the arithmetic structure on $H$.
	(The possible multitude of these groups corresponds to the possibility that $\Delta$ is reducible).
	\item
	The commensurator of $\Delta$ in $H$  
	can be identified with the image of $\prod\mathbf{H}_i(K_i)$,
	and $\Lambda\cong\Gamma$ is viewed as a subgroup of the latter.
	Considering the projections of $\Gamma$ to various finite places we identify 
	(see Theorem~\ref{thm: margulis lemma}) the sets $S_i^\fin$ of the non-archimedean places,
	so that on one hand 
	\[
		\Gamma<\prod_{i=1}^n \mathbf{H}_i(\calO_i[S_i]),
	\] 
	and on the other the closure $\overline{\Gamma}$ of the projection of $\Gamma$ 
	to $\prod_{i=1}^n (\mathbf{H}_i)_{S_i}$ contains $\prod_{i=1}^n (\mathbf{H}_i)_{S_i}^+$.
	(If $S_i$ is finite then $(\mathbf{H}_i)^+_{S_i}$ has finite index in $(\mathbf{H}_i)_{S_i}$).
	\item
	Next we construct a continuous surjective homomorphism with compact kernel
	from $D$ to the closure of the diagonal image of $\Gamma$ in $\prod (\mathbf{H}_i)_{S_i^\fin}$. 
	This step is conceptualized in the notion of \emph{representations of commensurable pairs} 
	(cf.~Section~\ref{sub:reps-of-comm}).
	This identifies $\Gamma$ as an arithmetically constructed lattice.
	\item
	Finally, finite generation of $\Gamma$, finiteness of each $S_i$, and compact generation of $D$
	turn out to be equivalent. 
\end{enumerate}

\section{Generalities on lattices}

For later use we collect here some lemmas about lattices in locally compact groups.

\begin{lemma}[{\cite{Raghunathan:book}*{Theorem~1.13 on p.~23}}]\label{L:reducing}
Let $G$ be a locally compact second countable group and $N\normal G$ a normal closed subgroup. 
The projection of a lattice $\Gamma<G$ to $G/N$ is discrete if and only if $\Gamma\cap N$ is a lattice in $N$. 
If so, then the projection of $\Gamma$ to $G/N$ is also a lattice. 
\end{lemma}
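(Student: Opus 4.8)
My plan is to transport the question to the $N$-orbit of the base point in $G/\Gamma$ and then feed it into Weil's integration formula. First I would set up the reductions. The subgroup $\Gamma\cap N$ is discrete in $N$ (it is discrete in $G$), so the only issue is its covolume; write $v\in(0,\infty]$ for the total mass of its $N$-invariant Radon measure, which exists because $G$, hence its closed normal subgroup $N$, is unimodular. Thus $\Gamma\cap N$ is a lattice in $N$ exactly when $v<\infty$. Next, $\pi(\Gamma)\cong\Gamma/(\Gamma\cap N)$ is a countable subgroup of $G/N$, and a countable locally compact group is discrete; hence $\pi(\Gamma)$ is discrete iff it is closed iff $N\Gamma$ is closed in $G$ iff the $N$-orbit $O:=N\Gamma/\Gamma$ of the base point is closed in $G/\Gamma$ (since $N\Gamma$ is the preimage of $O$ under $G\to G/\Gamma$). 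Finally, since $N$ is normal, every $N$-orbit in $G/\Gamma$ is a translate of $O$, and the orbit map identifies $O$ with $N/(\Gamma\cap N)$ as $N$-spaces.

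For the implication ``$\pi(\Gamma)$ discrete $\Rightarrow$ $\Gamma\cap N$ a lattice in $N$'' I would argue as follows. If $\pi(\Gamma)$ is discrete then the $N$-orbit space of $G/\Gamma$ is the Hausdorff space $(G/N)/\pi(\Gamma)$, and disintegrating the finite $G$-invariant measure of $G/\Gamma$ over it exhibits almost every fibre as a finite $N$-invariant measure on the corresponding $N$-orbit. These orbits are copies of $N/\bigl(g(\Gamma\cap N)g^{-1}\bigr)$, and $\mathrm{Ad}(g)$ scales Haar measure on $N$ by a finite factor, so $N/(\Gamma\cap N)$ has finite volume: $\Gamma\cap N$ is a lattice in $N$. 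To finish this direction and prove the final clause, I would note that once $v<\infty$, uniqueness of the $N$-invariant probability measure on $N/(\Gamma\cap N)$ forces $\mathrm{Ad}(\gamma)$ to preserve it for every $\gamma\in\Gamma$, so the modular character $\delta\colon G\to\mathbb R_{>0}$ recording how $\mathrm{Ad}(g)$ scales Haar measure on $N$ is trivial on $\Gamma$; a continuous homomorphism $G\to\mathbb R_{>0}$ trivial on a lattice is trivial, so $\delta\equiv 1$ and $G/N$ is unimodular. Weil's formula for the tower $\Gamma<N\Gamma<G$ then reads $\vol(G/\Gamma)=v\cdot\vol\bigl((G/N)/\pi(\Gamma)\bigr)$, whence $\vol\bigl((G/N)/\pi(\Gamma)\bigr)<\infty$, i.e.\ $\pi(\Gamma)$ is a lattice in $G/N$.

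For the converse, ``$\Gamma\cap N$ a lattice in $N$ $\Rightarrow$ $\pi(\Gamma)$ discrete'', it suffices to show $O$ is closed. I would push the $N$-invariant probability measure of $N/(\Gamma\cap N)$ forward through the orbit map to a finite $N$-invariant measure $\nu$ on $G/\Gamma$ with $\nu(O)=1$ and $\supp\nu=\overline O$. If $N/(\Gamma\cap N)$ is compact then $O$ is compact, hence closed, and we are done by the reductions. In general the argument is: an orbit carrying a finite invariant measure is locally closed, so $O$ is open in $\overline O$; then $\overline O\setminus O$ is a closed, $N$-invariant, $\nu$-null subset of $\supp\nu$, hence empty, so $O=\overline O$ is closed. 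By the reductions $\pi(\Gamma)$ is then discrete and, by the previous step, a lattice in $G/N$.

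The hard part will be the non-cocompact case of this last implication, namely the claim that a finite-invariant-measure $N$-orbit in $G/\Gamma$ is locally closed — equivalently, that the orbit map $N/(\Gamma\cap N)\to G/\Gamma$ is a homeomorphism onto a closed subset. For cocompact orbits this is a one-line compactness argument, but without cocompactness it is a genuine point requiring real input: the general fact that finite-volume homogeneous subsets of $G/\Gamma$ are closed, or an Effros-type smoothness argument for the $N$-orbit relation on $\supp\nu$, or (following Raghunathan) a direct comparison of Borel fundamental domains for $\Gamma$ and for $\Gamma\cap N$ via the integration formula. Everything else reduces to bookkeeping with Haar measures and modular functions, together with the fact that a countable locally compact group is discrete.
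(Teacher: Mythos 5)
The paper gives no argument for this lemma---it is quoted verbatim from Raghunathan's book (Theorem~1.13)---so the only question is whether your blind proof is actually complete. Your reductions are sound: $\pi(\Gamma)$ discrete $\Leftrightarrow$ closed $\Leftrightarrow$ $N\Gamma$ closed $\Leftrightarrow$ the orbit $O=N\Gamma/\Gamma$ closed in $G/\Gamma$; $N$ is indeed unimodular (Weil's formula for a closed normal subgroup gives $\Delta_G|_N=\Delta_N$); and the implication ``$\pi(\Gamma)$ discrete $\Rightarrow$ $\Gamma\cap N$ a lattice,'' via disintegration over the Hausdorff quotient $(G/N)/\pi(\Gamma)$, together with the character argument killing $\delta$ and Weil's formula for the final clause, is the standard route and works modulo routine care (simultaneous $N$-invariance of a.e.\ conditional measure, and the fact that the injective continuous orbit maps are Borel isomorphisms).

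The gap is in the converse, which is the hard half of the theorem. You reduce it to closedness of $O$, split that into (a) ``an orbit with finite invariant measure is locally closed'' and (b) ``locally closed plus finite invariant measure implies closed,'' and prove neither. For (a) you explicitly defer to ``real input'' (Effros, or Raghunathan's fundamental-domain comparison); but this \emph{is} the content of the theorem, not bookkeeping, so the proof is incomplete precisely at its crux. For (b) the justification given is invalid: knowing that $\overline{O}\setminus O$ is a closed, $N$-invariant, $\nu$-null subset of $\supp\nu$ does not force it to be empty---a closed null set can perfectly well sit inside the support of a measure (any single point in the support of a nonatomic measure does), and $N$-invariance alone does not rescue this without an additional recurrence or amenability input. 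The standard way to close both (a) and (b) at once is Raghunathan's: take a finite-measure Borel fundamental domain for $\Gamma\cap N$ in $N$ and show that failure of properness of $N/(\Gamma\cap N)\to G/\Gamma$ (a sequence $n_k\gamma_k\to g\notin N\Gamma$) would produce infinitely many disjoint translates of a fixed positive-measure set inside a set of finite measure. As written, the key implication is asserted rather than proved.
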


\begin{lemma}\label{L:intermediate}\cite{Raghunathan:book}*{Lemma~1.6 on p.~20}
	Let $G$ be a locally compact group, and $H<L<G$ closed subgroups. 
	Then $H<G$ has finite covolume if and only if both $L<G$ and $H<L$ have finite covolumes.
\end{lemma}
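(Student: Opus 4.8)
The plan is to reduce the statement to a single identity about the total masses of invariant measures on the three homogeneous spaces $G/H$, $G/L$, $L/H$. Throughout I use the convention that ``$A<B$ has finite covolume'' means that $B/A$ carries a nonzero $B$-invariant Radon measure of finite total mass; recall that the existence of a nonzero $B$-invariant Radon measure on $B/A$ is equivalent to the modular condition $\Delta_B|_A=\Delta_A$, and that such a measure is then unique up to a positive scalar. I may assume, as in the cited reference, that $G$ is second countable, so that all quotients are $\sigma$-compact and metrizable (the general case follows by the net version of the formula below). The key tool is Weil's iterated integration formula: when all three invariant measures $\mu_{G/H}$, $\mu_{G/L}$, $\mu_{L/H}$ exist, they can be normalized compatibly so that
\[
	\int_{G/H} f(gH)\,d\mu_{G/H}(gH)=\int_{G/L}\Bigl(\int_{L/H} f(glH)\,d\mu_{L/H}(lH)\Bigr)\,d\mu_{G/L}(gL)
\]
for all $f\in C_c(G/H)$. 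Applying this to an increasing sequence $f_n\uparrow 1$ and using monotone convergence (legitimate by $\sigma$-compactness) yields the identity
\[
	\mu_{G/H}(G/H)=\mu_{L/H}(L/H)\cdot\mu_{G/L}(G/L)\qquad\text{in }[0,\infty],
\]
valid whenever the three invariant measures exist. Since a nonzero invariant Radon measure on a homogeneous space has full support, all three masses lie in $(0,\infty]$, and a product of such extended reals is finite if and only if both factors are; so this identity already contains the whole lemma modulo the existence of the measures.

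For the ``if'' direction, suppose $L<G$ and $H<L$ have finite covolume. Then $\Delta_G|_L=\Delta_L$ and $\Delta_L|_H=\Delta_H$; for $h\in H$ we get $\Delta_G(h)=\Delta_L(h)=\Delta_H(h)$, so $\Delta_G|_H=\Delta_H$ and $\mu_{G/H}$ exists. The displayed identity then expresses $\mu_{G/H}(G/H)$ as a product of two finite positive numbers, hence $H<G$ has finite covolume.

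For the ``only if'' direction, suppose $H<G$ has finite covolume. First push the finite $G$-invariant measure $\mu_{G/H}$ forward along the continuous $G$-equivariant surjection $G/H\to G/L$: the result is a nonzero, finite, $G$-invariant Radon measure on $G/L$, hence by uniqueness a positive multiple of $\mu_{G/L}$, which is therefore finite — so $L<G$ has finite covolume, and in particular $\Delta_G|_L=\Delta_L$. Combining this with $\Delta_G|_H=\Delta_H$ gives, for $h\in H$, $\Delta_L(h)=\Delta_G(h)=\Delta_H(h)$, so $\mu_{L/H}$ exists as well. Now the displayed identity reads $\mu_{G/H}(G/H)=\mu_{L/H}(L/H)\cdot\mu_{G/L}(G/L)$ with the left side finite and the factor $\mu_{G/L}(G/L)$ finite and strictly positive; hence $\mu_{L/H}(L/H)<\infty$, i.e.\ $H<L$ has finite covolume.

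The main obstacle is the input rather than the bookkeeping: one needs Weil's iterated integration formula for a nested triple of closed subgroups, together with the fact that the three invariant measures can be simultaneously normalized so that no stray constant intrudes, and the (routine but necessary) passage from compactly supported test functions to the constant function $1$ via $\sigma$-compactness. If one prefers a self-contained argument rather than quoting this formula, the alternative is to pick Borel fundamental domains $F\subset G$ for $G/L$ and $F'\subset L$ for $L/H$, observe that $FF'$ is a fundamental domain for $G/H$, and check that the Haar measure of $FF'$ factors as (covolume of $L$ in $G$) times (covolume of $H$ in $L$); but making this rigorous for non-discrete $H$ and $L$ essentially reconstructs the integration formula. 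The modular-function compatibilities, which a priori look as though they might obstruct the argument, in fact resolve themselves once the push-forward step has established $\Delta_G|_L=\Delta_L$.
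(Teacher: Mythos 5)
Your proof is correct, and since the paper gives no argument of its own — it simply cites Raghunathan's Lemma~1.6 — your route via Weil's iterated integration formula and the mass identity $\mu_{G/H}(G/H)=\mu_{L/H}(L/H)\cdot\mu_{G/L}(G/L)$ is essentially the standard proof found in that reference. The pushforward step establishing $\Delta_G|_L=\Delta_L$ and the modular bookkeeping are handled properly, so there is nothing to add.
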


The following easy lemma is well known (cf.~\cite{Caprace+Monod:kacmoody}*{2.C}).

\begin{lemma}\label{L:open}
	Let $\Gamma<G$ be a lattice in a locally compact second countable group. Let $U<G$ be an open subgroup.
	Then $\Gamma\cap U$ is a lattice in $U$.
\end{lemma}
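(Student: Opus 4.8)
The plan is to realize $U/(\Gamma\cap U)$ as an open subset of $G/\Gamma$ and transport the finite invariant measure. First I would note that since $U$ is open in $G$ it is also closed, and it is itself a locally compact second countable group; moreover $\Gamma\cap U$, carrying the subspace topology inherited from the discrete group $\Gamma$, is a discrete (hence closed) subgroup of $U$. So the only thing left to prove is that $U/(\Gamma\cap U)$ admits a nonzero finite $U$-invariant Borel measure.

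Next I would analyze the natural map $\iota\colon U/(\Gamma\cap U)\to G/\Gamma$, $u(\Gamma\cap U)\mapsto u\Gamma$. It is well defined, $U$-equivariant, and injective: if $u_1\Gamma=u_2\Gamma$ with $u_1,u_2\in U$ then $u_2^{-1}u_1\in\Gamma\cap U$. Its image is $\pi(U)$, where $\pi\colon G\to G/\Gamma$ is the quotient map, and since $\pi^{-1}(\pi(U))=U\Gamma=\bigcup_{\gamma\in\Gamma}U\gamma$ is open, $\pi(U)$ is an open subset of $G/\Gamma$. Because $\pi$ is an open map and $U$ is open in $G$, the restriction $\pi|_U\colon U\to\pi(U)$ is an open continuous surjection; factoring it through the open quotient map $U\to U/(\Gamma\cap U)$ shows that $\iota$ is a homeomorphism onto the open subset $\pi(U)\subset G/\Gamma$.

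Finally I would transport measures. Let $\mu$ be a finite $G$-invariant Borel measure on $G/\Gamma$ witnessing that $\Gamma$ is a lattice; such a measure is positive on nonempty open sets, so $\mu(\pi(U))>0$. Since $U\cdot\pi(U)=U\Gamma/\Gamma=\pi(U)$, the restriction $\mu|_{\pi(U)}$ is a finite nonzero $U$-invariant Borel measure on $\pi(U)$, and pulling it back along the $U$-equivariant homeomorphism $\iota$ yields a finite nonzero $U$-invariant Borel measure on $U/(\Gamma\cap U)$. Hence $\Gamma\cap U$ is a lattice in $U$.

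There is essentially no serious obstacle here; the only points requiring a little care are the openness of $\pi(U)$ in $G/\Gamma$ (which relies on $\pi$ being an open map, valid for quotients by subgroups) and the fact that $\iota$ is not merely a continuous bijection but a homeomorphism, so that a genuine $\sigma$-additive invariant measure — rather than just a finitely additive set function — is transported. Both follow from standard facts about open subgroups and quotient maps of topological groups, which is why the lemma is folklore.
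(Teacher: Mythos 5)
Your proof is correct. The paper states this lemma without proof, citing it as well known (cf.\ Caprace--Monod, 2.C), and your argument --- identifying $U/(\Gamma\cap U)$ with the open subset $\pi(U)\subset G/\Gamma$ via a $U$-equivariant homeomorphism and restricting the finite invariant measure, using second countability to see that the restriction is nonzero --- is exactly the standard one.
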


\section{Around Margulis' super-rigidity for commensurators}
\label{sec: Margulis commensurator rigidity}

Throughout, let $K$ be a number field and $\bfH$ a connected almost simple $K$-algebraic group. We also retain the notation from Section~\ref{sub: intro-notation}. 

Let $\nu$ be a place of $K$. Whenever we consider 
the $\calO$-points or $\calO_\nu$-points of $\bfH$ for some place $\nu$ we pick an embedding $\bfH<\mathbf{GL}_n$ in advance and set 
$\bfH(\calO)=\mathbf{GL}_n(\calO)\cap \bfH(K)$ and similarly for $\calO_\nu$. Let $\tilde \bfH$ be the simply connected cover of $\bfH$ (in the algebraic sense) and $i\colon\tilde \bfH\to \bfH$ be the canonical central isogeny. Since $i$ is defined over $K$ but not necessarily over $\calO_\nu$ for a place $\nu$ of $K$ we do not necessarily have $i(\tilde \bfH(\calO_\nu))\subset\bfH(\calO_\nu)$. But according to the following result there are only finitely many exceptional places. 

\begin{lemma}[\cite{Platonov+Rapinchuk}*{Lemma~6.6 on p.~295}]\label{lem: defined over integral points}
	For all but finitely many finite places $\nu$, the central isogeny $i$ is defined over $\calO_\nu$, thus $i(\tilde \bfH(\calO_\nu))\subset\bfH(\calO_\nu)$. 
\end{lemma}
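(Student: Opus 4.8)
The plan is to prove this by a standard ``spreading-out'' (clearing-denominators) argument, working with integral models inside a general linear group; there is no deep obstacle, only one point of bookkeeping about what ``integral points'' means. In addition to the closed $K$-embedding $\bfH\hookrightarrow\mathbf{GL}_n$ already fixed, I would choose a closed $K$-embedding $\tilde\bfH\hookrightarrow\mathbf{GL}_m$. The ideals cutting out $\bfH$ and $\tilde\bfH$ in the respective general linear groups are finitely generated, and $i\colon\tilde\bfH\to\bfH$, composed with $\bfH\hookrightarrow\mathbf{GL}_n$, is given in affine coordinates by finitely many Laurent polynomials in the matrix entries of $\mathbf{GL}_m$; altogether only finitely many elements of $K$ occur. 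Hence there is a finite set $S\subset\calV^\fin$ such that all of them lie in the ring $\calO[S]$ of $S$-integers. Taking scheme-theoretic closures of $\bfH$ and $\tilde\bfH$ inside $\mathbf{GL}_{n,\calO[S]}$ and $\mathbf{GL}_{m,\calO[S]}$ yields flat closed subschemes $\underline\bfH$ and $\underline{\tilde\bfH}$ with generic fibres $\bfH$ and $\tilde\bfH$; by schematic density of these generic fibres (flatness over the Dedekind ring $\calO[S]$) they are in fact closed $\calO[S]$-subgroup schemes of $\mathbf{GL}_{n,\calO[S]}$ and $\mathbf{GL}_{m,\calO[S]}$, and after enlarging $S$ to clear the finitely many denominators in the chosen formulas for $i$ we obtain a morphism of $\calO[S]$-schemes $\underline i\colon\underline{\tilde\bfH}\to\underline\bfH$ with generic fibre $i$ (automatically a homomorphism of group schemes, again by schematic density and separatedness of $\underline\bfH$).

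Next I would fix a finite place $\nu\notin S$, so that $\calO[S]\subset\calO_\nu$, and base-change $\underline i$ along $\calO[S]\to\calO_\nu$ to get $\underline i_{\calO_\nu}\colon\underline{\tilde\bfH}_{\calO_\nu}\to\underline\bfH_{\calO_\nu}$. Since $\underline\bfH$ and $\underline{\tilde\bfH}$ are $\calO[S]$-flat, their base changes are $\calO_\nu$-flat and hence coincide with the scheme-theoretic closures of $\bfH\otimes_K K_\nu$ and $\tilde\bfH\otimes_K K_\nu$ in $\mathbf{GL}_{n,\calO_\nu}$ and $\mathbf{GL}_{m,\calO_\nu}$. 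A direct computation with ideals, using that $I_{\bfH}^{K_\nu}$ is generated by its intersection with $\calO_\nu[\mathbf{GL}_n]$ (as $\calO_\nu[\mathbf{GL}_n]\to K_\nu[\mathbf{GL}_n]$ is a localization), then identifies $\underline\bfH(\calO_\nu)=\mathbf{GL}_n(\calO_\nu)\cap\bfH(K_\nu)=\bfH(\calO_\nu)$ and likewise $\underline{\tilde\bfH}(\calO_\nu)=\tilde\bfH(\calO_\nu)$, with the conventions fixed in the text. The map induced by $\underline i_{\calO_\nu}$ on $\calO_\nu$-points is the restriction of the map induced by $i$ on $K_\nu$-points, so $i(\tilde\bfH(\calO_\nu))\subset\bfH(\calO_\nu)$ for every finite $\nu\notin S$, which is the stated inclusion.

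Finally, to justify that $i$ is genuinely ``defined over $\calO_\nu$'' as a central isogeny — i.e.\ that $\underline i_{\calO_\nu}$ is a central isogeny of smooth affine $\calO_\nu$-group schemes with geometrically connected fibres — I would enlarge $S$ once more, invoking the usual openness and constructibility properties of morphisms of finite presentation over $\operatorname{Spec}\calO[S]$: smoothness and geometric connectedness of the fibres of $\underline\bfH$ and $\underline{\tilde\bfH}$ hold on an open subscheme containing the generic point (they hold there because $\bfH$, $\tilde\bfH$ are smooth connected over $K$), and finiteness, flatness and centrality of $\underline i$ spread out from its generic fibre $i$ in the same manner; deleting the finitely many remaining bad places gives the claim. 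The only genuinely delicate point in the whole argument is the identification of the abstractly defined integral points $\underline\bfH(\calO_\nu)$, $\underline{\tilde\bfH}(\calO_\nu)$ with the concrete ones $\mathbf{GL}_\bullet(\calO_\nu)\cap\bfH(K_\nu)$ fixed in the paper, and this is precisely where flatness of the models over $\calO[S]$ is used; everything else is routine spreading-out.
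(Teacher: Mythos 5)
The paper does not actually prove this lemma; it is quoted verbatim from Platonov--Rapinchuk (Lemma~6.6 on p.~295), so there is no internal proof to compare against. Your spreading-out argument is a correct and complete proof of the cited fact, and it is essentially the scheme-theoretic formalization of what Platonov--Rapinchuk do by hand: they observe that, in the chosen coordinates, $i$ is given by finitely many Laurent polynomials in the matrix entries with coefficients in $K$, that these coefficients lie in $\calO_\nu$ for all but finitely many $\nu$, and that applying such formulas to $g$ and $g^{-1}$ with $g\in\tilde\bfH(\calO_\nu)\subset\mathbf{GL}_m(\calO_\nu)$ lands in $\mathbf{GL}_n(\calO_\nu)\cap\bfH(K_\nu)=\bfH(\calO_\nu)$. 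Your version, via flat closures over $\calO[S]$ and base change, buys a cleaner statement of what ``defined over $\calO_\nu$'' means (a morphism of integral models, and after further shrinking a genuine central isogeny of smooth group schemes), at the cost of the one delicate identification you correctly isolate, namely $\underline\bfH(\calO_\nu)=\mathbf{GL}_n(\calO_\nu)\cap\bfH(K_\nu)$, which indeed follows from $K_\nu[\mathbf{GL}_n]$ being a localization of $\calO_\nu[\mathbf{GL}_n]$. For the purposes of this paper only the inclusion $i(\tilde\bfH(\calO_\nu))\subset\bfH(\calO_\nu)$ is used, so the elementary coefficient-clearing argument already suffices; your final spreading-out paragraph is the extra work needed to justify the stronger phrase ``the central isogeny is defined over $\calO_\nu$'', and it is standard and correct.
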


Let $E_1$ be the set of exceptional places from the previous lemma, and let $E_2$ be the of exceptional places described by the next lemma. 

\begin{lemma}[\cite{tits}*{3.9.1 on p.~55}]\label{lem: maximal compact}
For all but finitely many finite places $\nu$, $\bfH(\calO_\nu)<\bfH(K_\nu)$ is a maximal compact subgroup. Similarly for $\tilde \bfH$.  	
\end{lemma}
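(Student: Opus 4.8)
The plan is to reduce the statement to the Bruhat--Tits theory of hyperspecial maximal compact subgroups, after a spreading-out argument producing good integral models of $\bfH$ at almost all~$\nu$. First I would fix the closed embedding $\bfH<\GL_n$ used to define integral points; together with $\bfH$ itself it is defined over $\calO[1/N]$ for a suitable nonzero $N\in\calO$. Let $\mathcal H$ be the schematic closure of $\bfH$ in $\GL_{n,\calO[1/N]}$, a flat affine group scheme of finite type over $\calO[1/N]$ with generic fibre $\bfH$. By construction $\bfH(\calO_\nu)=\GL_n(\calO_\nu)\cap\bfH(K_\nu)=\mathcal H(\calO_\nu)$ for every finite $\nu$ not dividing~$N$, and this is a compact open subgroup of $\bfH(K_\nu)$ because $\mathcal H$ is affine of finite type over the compact ring~$\calO_\nu$.

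The next step is to arrange good reduction outside a finite set. Since the condition ``$\mathcal H\to\operatorname{Spec}\calO[1/N]$ is smooth with connected semisimple geometric fibres'' is open (indeed constructible) on the base, and it holds at the generic point because $\bfH$ is connected, absolutely almost simple, and of characteristic zero, it holds on a dense open subset of $\operatorname{Spec}\calO[1/N]$; enlarging $N$ accordingly, we may assume $\mathcal H$ is a semisimple group scheme over $\calO[1/N]$. Then for every finite $\nu\nmid N$ the group $\bfH_{K_\nu}$ is unramified and $\bfH(\calO_\nu)=\mathcal H(\calO_\nu)$ is the stabiliser of a hyperspecial vertex in the Bruhat--Tits building of $\bfH(K_\nu)$.

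Finally I would invoke the Bruhat--Tits classification: a hyperspecial parahoric subgroup of $\bfH(K_\nu)$ is a maximal bounded subgroup (see \cite{tits}), and bounded subgroups of $\bfH(K_\nu)$ are relatively compact since $\bfH$ is semisimple; hence $\bfH(\calO_\nu)$ is a maximal compact subgroup for all $\nu\nmid N$. The claim for $\tilde\bfH$ follows by running the identical argument for the simply connected cover, which is again connected and absolutely almost simple over~$K$: fix an embedding $\tilde\bfH<\GL_m$, spread out to a semisimple model over some $\calO[1/N']$, and conclude. Discarding the union of the two finite exceptional sets gives the asserted finite set of places.

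The only step that is not a formality is the good-reduction property of the \emph{particular} model $\mathcal H$ attached to the fixed $\GL_n$-embedding: one must know that the schematic closure of $\bfH$ in $\GL_{n,\calO_\nu}$ is itself smooth with semisimple special fibre for almost all~$\nu$, not merely that some model is. This is precisely what the spreading-out step handles --- the closure recovers $\bfH$ over the generic point, hence on a dense open of $\operatorname{Spec}\calO[1/N]$ it agrees with a fixed smooth semisimple model (for instance a Chevalley model of $\bfH$), and smoothness together with the fibral conditions propagate from the generic point. Granting this bookkeeping, the maximal compactness assertion reduces to a direct citation.
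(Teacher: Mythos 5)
Your argument is correct, and it is precisely the standard proof of the result the paper simply cites without proof (Tits, \emph{Reductive groups over local fields}, 3.9.1): spreading out the schematic closure of $\bfH$ in $\GL_{n,\calO[1/N]}$ to a smooth semisimple model at almost all places, so that $\bfH(\calO_\nu)$ is hyperspecial and hence maximal compact, and repeating for $\tilde\bfH$. Since the paper treats this as a black-box citation, there is nothing to compare beyond noting that you have correctly reconstructed the argument behind the reference, including the genuinely necessary point that the good-reduction property must hold for the \emph{specific} integral model determined by the fixed $\GL_n$-embedding.
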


Let $\nu\not\in E_1\cup E_2$. Then $i^{-1}(\bfH(\calO_\nu))$ is a compact subgroup that contains $\tilde \bfH(\calO_\nu)$. Hence it coincides with it. We record this for later use: 

\begin{corollary}\label{cor: exceptional places}
	For all but finitely many finite places $\nu$, $i^{-1}(\bfH(\calO_\nu))=\tilde\bfH(\calO_\nu)$. 
\end{corollary}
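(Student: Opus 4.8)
The plan is to combine the two previous lemmas and argue that outside the union of their exceptional sets the containment $\tilde\bfH(\calO_\nu)\subset i^{-1}(\bfH(\calO_\nu))$ is forced to be an equality by a maximality-of-compact-subgroups argument. Concretely, fix a finite place $\nu\notin E_1\cup E_2$, where $E_1$ is the finite set from Lemma~\ref{lem: defined over integral points} and $E_2$ the finite set from Lemma~\ref{lem: maximal compact}.

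First I would check the inclusion $\tilde\bfH(\calO_\nu)\subset i^{-1}(\bfH(\calO_\nu))$. Since $\nu\notin E_1$, Lemma~\ref{lem: defined over integral points} gives $i(\tilde\bfH(\calO_\nu))\subset\bfH(\calO_\nu)$, which is exactly this inclusion. Next I would observe that $i^{-1}(\bfH(\calO_\nu))$ is a compact subgroup of $\tilde\bfH(K_\nu)$: the map $i\colon\tilde\bfH(K_\nu)\to\bfH(K_\nu)$ is continuous and proper (a central isogeny has finite kernel, and over a local field the induced map on $K_\nu$-points is proper), so the preimage of the compact group $\bfH(\calO_\nu)$ is compact. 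Finally, since $\nu\notin E_2$, Lemma~\ref{lem: maximal compact} tells us that $\tilde\bfH(\calO_\nu)$ is a \emph{maximal} compact subgroup of $\tilde\bfH(K_\nu)$; a maximal compact subgroup contained in another compact subgroup must equal it, so $\tilde\bfH(\calO_\nu)=i^{-1}(\bfH(\calO_\nu))$. Since $E_1\cup E_2$ is finite, this holds for all but finitely many finite places, which is the assertion of Corollary~\ref{cor: exceptional places}.

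The only point requiring a little care — and thus the main (mild) obstacle — is the properness of $i$ on $K_\nu$-points, which is what guarantees that the \emph{a priori} merely closed subgroup $i^{-1}(\bfH(\calO_\nu))$ is actually compact; this is a standard fact about morphisms of algebraic groups with finite kernel over local fields (the fibers are finite and the map is a closed immersion after a suitable identification, or one invokes that an isogeny is a finite morphism). The surrounding discussion in the excerpt already observes that $i^{-1}(\bfH(\calO_\nu))$ "is a compact subgroup that contains $\tilde\bfH(\calO_\nu)$,'' so in the write-up I would simply record that sentence and then invoke maximality; in fact the excerpt's paragraph preceding the corollary already carries out precisely this two-line argument, and the corollary is just its clean restatement for later reference.
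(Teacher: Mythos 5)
Your argument is exactly the one the paper uses: the sentence preceding the corollary already records that for $\nu\notin E_1\cup E_2$ the group $i^{-1}(\bfH(\calO_\nu))$ is a compact subgroup containing the maximal compact $\tilde\bfH(\calO_\nu)$, hence equals it. Your additional remark justifying compactness of $i^{-1}(\bfH(\calO_\nu))$ via properness of the isogeny on $K_\nu$-points is a correct filling-in of a detail the paper leaves implicit.
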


\begin{lemma} \label{lemma:+}
Let $I\subset \calV^\fin$ be the set of finite places $\nu$ of $K$ such that $\mathbf{H}$ is $K_\nu$-isotropic. Let $J\subset I$. Assume $\mathbf{H}_{\calV^\infty}$ is not compact. Then: 
\begin{enumerate}
\item The image of $i:\tilde{\mathbf{H}}_J\to\mathbf{H}_J$ equals $\mathbf{H}_J^+$. 
\item $\mathbf{H}_J^+<\mathbf{H}_J$ is closed, and the map $\tilde{\mathbf{H}}_J\to\mathbf{H}_J^+$ is open. 
\item $\mathbf{H}_J^+$ is an open subgroup of $\mathbf{H}_J^\#$.
\end{enumerate}
\end{lemma}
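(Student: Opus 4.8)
The plan is to reduce everything to the case of a single place and then assemble the restricted product. For a single finite place $\nu\in I$, the group $\mathbf{H}$ is $K_\nu$-isotropic, so by the Borel--Tits theory the image of the isogeny $i\colon\tilde{\mathbf{H}}(K_\nu)\to\mathbf{H}(K_\nu)$ is precisely $\mathbf{H}(K_\nu)^+$ (this is one of the standard characterizations of $\mathbf{H}(k)^+$ for isotropic groups over local fields: it is the image of the simply connected cover, it is generated by $K_\nu$-rational unipotents, and it is open and closed of finite index in $\mathbf{H}(K_\nu)$). I would first record this local statement, together with the facts that $\mathbf{H}(K_\nu)^+$ is open and closed in $\mathbf{H}(K_\nu)$ and that $\tilde{\mathbf{H}}(K_\nu)\to\mathbf{H}(K_\nu)^+$ is open (being a continuous surjective homomorphism of locally compact second countable groups, by the open mapping theorem). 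For $\nu\in\calV^\infty$ this is the remark already made in the introduction that $\mathbf{H}(K_\nu)^+$ is the identity component.

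\emph{Step (1).} By Corollary~\ref{cor: exceptional places}, for all but finitely many $\nu$ we have $i^{-1}(\mathbf{H}(\calO_\nu))=\tilde{\mathbf{H}}(\calO_\nu)$, hence $i(\tilde{\mathbf{H}}(\calO_\nu))\subset\mathbf{H}(\calO_\nu)$, and in fact (again for $\nu$ outside the exceptional set, where $\mathbf{H}(\calO_\nu)$ and $\tilde{\mathbf{H}}(\calO_\nu)$ are hyperspecial maximal compact and $\mathbf{H}$ has good reduction) $i(\tilde{\mathbf{H}}(\calO_\nu))=\mathbf{H}(\calO_\nu)\cap\mathbf{H}(K_\nu)^+$, so that the integrality condition defining the restricted products matches up. Therefore $i$ maps $\tilde{\mathbf{H}}_J$ into $\mathbf{H}_J$, and an element $(x_\nu)\in\mathbf{H}_J$ lies in the image iff each $x_\nu\in\mathbf{H}(K_\nu)^+$ and $x_\nu\in\mathbf{H}(\calO_\nu)$ for almost all $\nu$; comparing with the definition of $\mathbf{H}_J^+$ gives $i(\tilde{\mathbf{H}}_J)=\mathbf{H}_J^+$. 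Here one uses that the almost-everywhere-integral condition in $\tilde{\mathbf{H}}_J$ is equivalent, via the previous sentence, to the one in $\mathbf{H}_J^+$.

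\emph{Step (2).} Closedness and openness of the map: $\mathbf{H}(K_\nu)^+$ is closed in $\mathbf{H}(K_\nu)$ for every $\nu\in J$ (finite or infinite), and $\mathbf{H}(\calO_\nu)\subset\mathbf{H}(K_\nu)^+$ for almost all $\nu$ by Step~(1), so $\mathbf{H}_J^+$ is a closed subgroup of $\mathbf{H}_J$: its complement is open because a net in $\mathbf{H}_J$ converging to a point outside $\mathbf{H}_J^+$ must eventually have some coordinate outside $\mathbf{H}(K_\nu)^+$ for one of the finitely many relevant $\nu$. The map $\tilde{\mathbf{H}}_J\to\mathbf{H}_J^+$ is continuous and surjective between locally compact $\sigma$-compact groups, hence open by the open mapping theorem (alternatively, build a basis of open sets in $\tilde{\mathbf{H}}_J$ from products of local open sets with $\tilde{\mathbf{H}}(\calO_\nu)$ at cofinitely many places and use the local openness and $i(\tilde{\mathbf{H}}(\calO_\nu))=\mathbf{H}(\calO_\nu)\cap\mathbf{H}(K_\nu)^+$).

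\emph{Step (3).} For openness of $\mathbf{H}_J^+$ in $\mathbf{H}_J^\#$: since $\mathbf{H}(K)$ is Zariski-dense and $\mathbf{H}$ is isotropic over each $\nu\in J$ with $\mathbf{H}_{\calV^\infty}$ noncompact, the group $\mathbf{H}(K)^+$ has finite index in $\mathbf{H}(K)$ and its diagonal image already lies in $\mathbf{H}_J^+$ (each rational unipotent maps into every $\mathbf{H}(K_\nu)^+$, and integrality at almost all places is automatic). Hence the closure $\mathbf{H}_J^\#$ of the image of $\mathbf{H}(K)$ is contained in $\mathbf{H}_J^+\cdot F$ for a finite set $F$ of coset representatives of $\mathbf{H}(K)^+$ in $\mathbf{H}(K)$; since $\mathbf{H}_J^+$ is closed in $\mathbf{H}_J$ by Step~(2), it is a closed, hence clopen, finite-index subgroup of $\mathbf{H}_J^\#$, in particular open. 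I expect the main obstacle to be bookkeeping the ``almost all places'' conditions so that the three restricted products ($\tilde{\mathbf{H}}_J$, $\mathbf{H}_J^+$, $\mathbf{H}_J$) are genuinely compatible; this is exactly where Corollary~\ref{cor: exceptional places} and the good-reduction identification $i(\tilde{\mathbf{H}}(\calO_\nu))=\mathbf{H}(\calO_\nu)\cap\mathbf{H}(K_\nu)^+$ at unramified places do the work, and the rest is soft topology.
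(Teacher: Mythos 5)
Parts (1) and (2) of your proposal follow essentially the paper's route and are correct, up to one false aside: it is \emph{not} true that $\mathbf{H}(\calO_\nu)\subset\mathbf{H}(K_\nu)^+$ for almost all $\nu$ (for $\mathbf{H}=\PGL_2$ the group $\PGL_2(\bbZ_p)$ contains classes of non-square determinant, hence is not contained in the image of $\SL_2(\bbQ_p)$). This slip is harmless for closedness of $\mathbf{H}_J^+$, which needs only that each $\mathbf{H}(K_\nu)^+$ is closed in $\mathbf{H}(K_\nu)$, and your identification $i(\tilde{\mathbf{H}}(\calO_\nu))=\mathbf{H}(\calO_\nu)\cap\mathbf{H}(K_\nu)^+$ at the non-exceptional places is the correct statement and suffices for part (1).

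Part (3), however, has a fatal gap. Your argument rests on the claim that $\mathbf{H}(K)^+$ has finite index in $\mathbf{H}(K)$ for the \emph{number field} $K$. This holds over local fields but is false over global ones: for $\mathbf{H}=\PGL_2$ and $K=\bbQ$ the quotient $\PGL_2(\bbQ)/\PGL_2(\bbQ)^+\cong\bbQ^*/(\bbQ^*)^2$ is infinite. Correspondingly, your conclusion that $\mathbf{H}_J^+$ has finite index in $\mathbf{H}_J^\#$ is also false in general: the paper's running example computes $\mathbf{H}_{S^\fin}^\#/\mathbf{H}_{S^\fin}^+\cong\bigoplus_{p}\bbZ/2\bbZ$, an infinite discrete group (indeed the whole point of the subgroup $A$ in Definition~\ref{def:arithcon} is that this quotient can be infinite). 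So the route ``closed of finite index, hence open'' is unavailable, and openness must be proved directly. Note also that your step (3) never establishes the containment $\mathbf{H}_J^+\subset\mathbf{H}_J^\#$: you only show that the image of $\mathbf{H}(K)^+$ lands in $\mathbf{H}_J^+$, which is the wrong direction. Both missing pieces require strong approximation for the simply connected cover $\tilde{\mathbf{H}}$ --- this is where the hypothesis that $\mathbf{H}_{\calV^\infty}$ is noncompact, which you never use, enters. Density of $\tilde{\mathbf{H}}(K)$ in $\tilde{\mathbf{H}}_J$ gives $\mathbf{H}_J^+=i(\tilde{\mathbf{H}}_J)\subset\overline{i(\tilde{\mathbf{H}}(K))}\subset\mathbf{H}_J^\#$. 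For openness the paper proves that $\mathbf{H}_{\calV^\fin}^+\cap U$ has finite index in $\mathbf{H}_{\calV^\fin}^\#\cap U$ for the compact open subgroup $U=\prod_\nu\mathbf{H}(\calO_\nu)$, combining strong approximation with the commensurability of $i(\tilde{\mathbf{H}}(\calO))$ and $\mathbf{H}(\calO)$ and the density of $\mathbf{H}(\calO)$ in $\mathbf{H}_{\calV^\fin}^\#\cap U$; the case of general $J$ then follows by projecting and applying the open mapping theorem. The finite index holds only inside $U$, not globally, and that local finiteness is exactly what openness amounts to.
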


\begin{proof}\hfill

(1) For every $\nu\in J$ by \cite[Corollary~I.2.3.1(a)]{Margulis:book} and \cite[Corollary~I.1.5.5]{Margulis:book},
$i(\tilde{\mathbf{H}}(K_\nu))=i(\tilde{\mathbf{H}}(K_\nu)^+)=\mathbf{H}(K_\nu)^+$.
By \cite[Corollary~I.2.3.1(b)]{Margulis:book} $\mathbf{H}(K_\nu)^+$ is closed in $\mathbf{H}(K_\nu)$. With Lemma~\ref{lem: defined over integral points} we obtain that $i(\tilde{\mathbf{H}}_J)\subset \mathbf{H}_J^+$. For the reverse inclusion, consider an element $(x_\nu)_{\nu\in J}\in \bfH_J^+$. For each $\nu\in J$ let $y_\nu\in \tilde\bfH(K_\nu)$ be a pre-image of $x_\nu$, so $i(y_\nu)=x_\nu$. Since $x_\nu\in \bfH(\calO_\nu)$ for all but finitely many $\nu$, we also have $y_\nu\in\tilde \bfH(\calO_\nu)$ for all but finitely many $\nu$. In other words, $(y_\nu)_{\nu\in J}\in \tilde\bfH_J$, and we proved that 
$i(\tilde{\mathbf{H}}_J)=\mathbf{H}_J^+$. 

%
(2) Note that $\bfH_J$ as a topological space is the colimit of $\bfH_S^+\times\prod_{J-S} \bfH(\calO_\nu)$ running over 
finite subsets $S\subset J$. 
The subgroup $\bfH_J^+<\bfH_J$ is closed since for every finite subset $S\subset J$ the intersection 
\[ \bfH_J^+\cap \Bigl(\bfH_S^+\times\prod_{J-S} \bfH(\calO_\nu)\Bigr)=\mathbf{H}_S^+\times \Bigl(\prod_{J-S} \mathbf{H}(\calO_\nu)\cap \mathbf{H}(K_\nu)^+\Bigr) \] 
is closed being the product of a closed and compact groups. By the open mapping theorem~\cite{bourbaki}*{IX. \S 5.3} we conclude that the map $\tilde{\mathbf{H}}_J\to\mathbf{H}_J^+$ is open.

(3) By \cite{Platonov+Rapinchuk}*{Theorem~7.12} the simply connected group $\tilde{\mathbf{H}}$
satisfies the strong approximation property.
Hence the image of $\tilde{\mathbf{H}}(K)$ in $\tilde{\mathbf{H}}_J$ is dense which implies that $\bfH_J^+$ is a subgroup 
of $\bfH_J^\#$. To show that it is an open subgroup we first consider the case $J=\calV^\fin$:
 
Let $U=\prod_{\nu\in \calV^\fin} \mathbf{H}(\calO_\nu)$; it is a compact open subgroup of $\bfH_{\calV^\fin}$. 
By Corollary~\ref{cor: exceptional places} there is a finite subset $E\subset\calV^\fin$ of exceptional places and 
compact open subgroups $C_\nu<\tilde\bfH(K_\nu)$ for $\nu\in E$ so that 
\[ i^{-1}(U)=\prod_{\nu\in\calV^\fin\bs E}\tilde\bfH(\calO_\nu)\times \prod_{\nu\in E} C_\nu\]
and $i(\tilde\bfH(\calO_\nu))=\bfH(\calO_\nu)$ if $\nu\in\calV^\fin\bs E$. 
In particular, $i^{-1}(U)$ and $\prod_{\nu\in \calV^\fin}\tilde\bfH(\calO_\nu)$ are commensurable.
Further, the groups $i(\tilde{\mathbf{H}}(\calO))$ and $\mathbf{H}(\calO)$ are commensurable within $\bfH(K)$ 
by~\cite[Corollary~I.3.2.9]{Margulis:book}, and $\mathbf{H}(\calO)$ is dense in $\mathbf{H}_{\calV^\fin}^\#\cap \prod_{\nu\in \calV^\fin} \mathbf{H}(\calO_\nu)$. We prove that $i(\tilde{\mathbf{H}}_{\calV^\fin})=\bfH_{\calV^\fin}^+$
is open in $\mathbf{H}_{\calV^\fin}^\#$ by proving that the closed subgroup (see (2)) 
$\bfH_{\calV^\fin}^+\cap U<\mathbf{H}_{\calV^\fin}^\#\cap U$ has finite index, thus is open. This follows from the following 
commutative diagram which collects the information obtained so far. 

\begin{equation*}
	\begin{tikzcd}
	\tilde\bfH (\calO)\arrow[r, hook]\arrow[dd,twoheadrightarrow] &\prod_{\nu\in\calV^\fin}\tilde\bfH(\calO_\nu)\arrow[d, leftrightsquigarrow, ,"\text{commen.}"]\\
	&     i^{-1}(U)\arrow[r, hook]\arrow[d, "i"] & \tilde\bfH_{\calV^\fin}\arrow[d, "i"]\\
	i(\tilde\bfH(\calO))\arrow[d, leftrightsquigarrow, ,"\text{commen.}"] & \bfH_{\calV^\fin}^+\cap U\arrow[d,hook]\arrow[r,hook]& \bfH_{\calV^\fin}^+\arrow[d,hook]\\
	\bfH(\calO)\arrow[r,hook,"\text{dense}"] & \bfH_{\calV^\fin}^\#\cap U\arrow[r,hook] & \bfH_{\calV^\fin}^\#
	\end{tikzcd}
\end{equation*}

The diagram shows that there is a subgroup $L<i(\tilde\bfH(\calO))\cap \bfH(\calO)<\bfH(K)$ that has finite index 
in $\bfH(\calO)$ such that the induced homomorphism 
\[ \bfH(\calO)/L\rightarrow \bfH_{\calV^\fin}^\#\cap U/\bfH_{\calV^\fin}^+\cap U\] 
has dense image. Since the left hand side is finite, it is surjective and so the right hand side is finite too. 

Next we show the openness statement for general $J$. 
Note that the projection $\mathbf{H}_{\calV^\fin}\to \mathbf{H}_J$, which is locally given as the proper homomorphism 
$ \prod_{\nu\in \calV^\fin} \mathbf{H}(\calO_\nu) \to  \prod_{\nu\in J} \mathbf{H}(\calO_\nu)$, sends 
closed groups to closed groups.
It follows that the image of $\mathbf{H}_{\calV^\fin}^\#$ is $\mathbf{H}_J^\#$, and $p:\mathbf{H}_{\calV^\fin}^\#\to \mathbf{H}_J^\#$
is open by the open mapping theorem~\cite{bourbaki}*{IX. \S 5.3}.
We conclude that of $\bfH_J^+=i(\tilde\bfH_J)=p(i(\tilde\bfH_{\calV^\fin}))<\mathbf{H}_J^\#$ is open.
\end{proof}

\begin{lemma} \label{thm:sa}
Let $K$ be a number field and $\mathbf{H}$ be a connected, almost simple $K$-algebraic group.
Assume $\mathbf{H}_{\calV^\infty}$ is not compact.
Let $I\subset \calV^\fin$ be the set of non-archimedean places $\nu$ such that $\mathbf{H}$ is $K_\nu$-isotropic.
Let $\Sigma$ be a subgroup of $\mathbf{H}(K)$ which contains a finite index subgroup of $\mathbf{H}(\calO)$.
Then the closure of the image of $\Sigma$ in $\mathbf{H}_I$ contains an open subgroup of $\mathbf{H}_I^+$.
\end{lemma}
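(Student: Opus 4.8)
The plan is to reduce the statement to the already-established parts of Lemma~\ref{lemma:+}, in particular to the density and openness assertions there, by a commensurability argument. First, since $\Sigma$ contains a finite index subgroup $\Sigma_0<\mathbf{H}(\calO)$, it suffices to prove the statement for $\Sigma_0$, because the closure of the image of $\Sigma_0$ is a (finite index, hence) open-in-the-closure subgroup of the closure of the image of $\Sigma$; so any open subgroup of $\mathbf{H}_I^+$ contained in $\overline{\Sigma_0}$ is still contained in $\overline{\Sigma}$. Thus I reduce to showing: the closure of the image of $\mathbf{H}(\calO)$ in $\mathbf{H}_I$ contains an open subgroup of $\mathbf{H}_I^+$. (Strictly, one must also note that if $\Sigma_0$ has finite index $m$ in $\mathbf{H}(\calO)$, its image need not have finite index in the image, but passing to the closures the index can only drop; alternatively, replace $\Sigma_0$ by the normal core and use that $\overline{\Sigma_0}\normal\overline{\mathbf{H}(\calO)}$ with finite-index image in each local factor, which is enough.)

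Next I identify the closure of $\mathbf{H}(\calO)$ inside $\mathbf{H}_I$. The projection $p\colon \mathbf{H}_{\calV^\fin}\to \mathbf{H}_I$ (forgetting the coordinates outside $I$, at which $\mathbf{H}$ is anisotropic so $\mathbf{H}(K_\nu)$ is compact) is a continuous open surjection sending closed subgroups to closed subgroups, exactly as in the last paragraph of the proof of Lemma~\ref{lemma:+}. Under $p$, the image of $\mathbf{H}(K)$—diagonally embedded in $\mathbf{H}_{\calV^\fin}$—maps onto the image of $\mathbf{H}(K)$ in $\mathbf{H}_I$, and $\mathbf{H}(\calO)=\mathbf{H}(K)\cap\prod_{\nu\in\calV^\fin}\mathbf{H}(\calO_\nu)$ maps into $\mathbf{H}_I^\#\cap\prod_{\nu\in I}\mathbf{H}(\calO_\nu)$. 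By strong approximation for the simply connected cover $\tilde{\mathbf{H}}$ (\cite{Platonov+Rapinchuk}*{Theorem~7.12}, used already in Lemma~\ref{lemma:+}(3)), the image of $\tilde{\mathbf{H}}(K)$ is dense in $\tilde{\mathbf{H}}_I$, so the image of $\mathbf{H}(K)$ is dense in $\mathbf{H}_I^+=i(\tilde{\mathbf{H}}_I)$; hence $\overline{\mathbf{H}(K)}\supset\mathbf{H}_I^+$, and in fact $\overline{\mathbf{H}(K)}\cap\prod_{\nu\in I}\mathbf{H}(\calO_\nu)\supset\mathbf{H}_I^+\cap\prod_{\nu\in I}\mathbf{H}(\calO_\nu)$ by Lemma~\ref{lemma:+}(3).

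The remaining point is to pass from $\mathbf{H}(K)$ to the smaller group $\mathbf{H}(\calO)$: I must show the closure of $\mathbf{H}(\calO)$ in $\mathbf{H}_I$ still contains an open subgroup of $\mathbf{H}_I^+$. This is where the work of Lemma~\ref{lemma:+}(3) is reused almost verbatim. There it is shown, via the big commutative diagram and the finiteness of $\mathbf{H}(\calO)/L$, that $\mathbf{H}(\calO)$ is dense in the open subgroup $\mathbf{H}_{\calV^\fin}^\#\cap U$ of $\mathbf{H}_{\calV^\fin}^+$, where $U=\prod_{\nu\in\calV^\fin}\mathbf{H}(\calO_\nu)$; more precisely $\overline{\mathbf{H}(\calO)}$ has finite index in $\mathbf{H}_{\calV^\fin}^\#\cap U$, hence contains a finite-index (thus open) subgroup of $\mathbf{H}_{\calV^\fin}^+\cap U$. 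Applying the open, closed-subgroup-preserving projection $p$ and using that $p(\mathbf{H}_{\calV^\fin}^+)=\mathbf{H}_I^+$ (Lemma~\ref{lemma:+}(1) together with functoriality of $i$), the image $p\bigl(\overline{\mathbf{H}(\calO)}\bigr)$—which is closed and contained in $\overline{p(\mathbf{H}(\calO))}$—contains an open subgroup of $\mathbf{H}_I^+$. Since $p(\mathbf{H}(\calO))$ is exactly the image of $\mathbf{H}(\calO)$ in $\mathbf{H}_I$, this completes the proof. The main obstacle is purely bookkeeping: making sure the commensurability/finite-index statements survive taking closures and applying $p$, i.e.\ that ``dense in an open subgroup'' is preserved under open continuous maps sending closed subgroups to closed subgroups; once that is phrased cleanly, everything follows from Lemma~\ref{lemma:+}.
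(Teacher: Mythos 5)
Your argument is correct, and it ultimately rests on the same two pillars as the paper's proof --- strong approximation for the simply connected cover $\tilde{\mathbf{H}}$ and the openness statements of Lemma~\ref{lemma:+} --- but it packages them differently. The paper lifts $\Sigma$ itself to the cover: $i^{-1}(\Sigma)$ contains a finite index subgroup of $\tilde{\mathbf{H}}(\calO)$, which by strong approximation has open closure in $\tilde{\mathbf{H}}_I$, and this is pushed forward along the open map $\tilde{\mathbf{H}}_I\to\mathbf{H}_I^+$. You instead never lift $\Sigma$: you reduce to $\Sigma=\mathbf{H}(\calO)$ by a finite-index-survives-closure argument, quote the containment $\overline{\mathbf{H}(\calO)}=\mathbf{H}_{\calV^\fin}^\#\cap U\supset \mathbf{H}_{\calV^\fin}^+\cap U$ established inside the proof of Lemma~\ref{lemma:+}(3), and project from $\calV^\fin$ down to $I$. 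Both work; the paper's route is shorter because it skips your reduction and works directly over $I$, while yours recycles more of the earlier lemma. Three small points in your write-up. First, the parenthetical claim that the image of a finite-index subgroup ``need not have finite index in the image'' is false: if $[\mathbf{H}(\calO):\Sigma_0]=m$ then $\phi(\mathbf{H}(\calO))$ is covered by $m$ cosets of $\phi(\Sigma_0)$, so the index is at most $m$, and the same bound survives taking closures --- which is exactly the fact your reduction needs, so no harm done. Second, the assertion that $p(\mathbf{H}_{\calV^\fin}^+\cap U)$ is open in $\mathbf{H}_I^+$ is not quite a formal consequence of $p$ being open on the ambient groups (restrictions of open maps to closed subgroups need not be open onto their images); the clean justification is the explicit description $\mathbf{H}_{\calV^\fin}^+\cap U=\prod_{\nu}\bigl(\mathbf{H}(K_\nu)^+\cap\mathbf{H}(\calO_\nu)\bigr)$, whose projection is $\mathbf{H}_I^+\cap\prod_{\nu\in I}\mathbf{H}(\calO_\nu)$, visibly open in $\mathbf{H}_I^+$. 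Third, your middle paragraph about $\overline{\mathbf{H}(K)}\supset\mathbf{H}_I^+$ does no work --- the entire weight is carried by the final paragraph.
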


\begin{proof}
We let $i:\tilde{\mathbf{H}}\to \mathbf{H}$ be the canonical central isogeny, where $\tilde{\mathbf{H}}$ denotes the simply connected cover of $\mathbf{H}$.
By \cite{Platonov+Rapinchuk}*{Theorem~7.12} the simply connected group $\tilde{\mathbf{H}}$
satisfies the strong approximation property.
In particular, the image of $\tilde{\mathbf{H}}(K)$ in $\tilde{\mathbf{H}}_I$ is dense.
It follows that the image of $\tilde{\mathbf{H}}(\calO)$ is dense in $\prod_{\nu\in I} \tilde{\mathbf{H}}(\calO_\nu)$.
Therefore the image of any finite index subgroup of $\tilde{\mathbf{H}}(\calO)$ has an open closure in $\tilde{\mathbf{H}}_I$, and so is the closure of the image of $i^{-1}(\Sigma)$.
By Lemma~\ref{lemma:+} the image of $\tilde{\mathbf{H}}_I\to \mathbf{H}_I$ is $\mathbf{H}_I^+$ and the map $\tilde{\mathbf{H}}_I\to\mathbf{H}_I^+$
is open, hence we are done by the commutativity of the obvious diagram 
	\[\begin{tikzcd}
     	 \tilde{\mathbf{H}}(K) \arrow[d]\arrow[r] &  \tilde{\mathbf{H}}_I\arrow[d]\\
		\mathbf{H}(K)\arrow[r] & \mathbf{H}_I
\end{tikzcd}\qedhere\]
\end{proof}

\begin{theorem} \label{thm:algsetting}
Let $K$ be a number field and $\mathbf{H}$ be a connected, adjoint, almost simple $K$-algebraic group.
Assume $\mathbf{H}_{\calV^\infty}$ is not compact.
Let $\Sigma$ be a subgroup of $\mathbf{H}(K)$ which contains a finite index subgroup of $\mathbf{H}(\calO)$.
Let $S$ be the set of places $\nu$ of $K$ such that $\Sigma$ is unbounded in $\mathbf{H}(K_\nu)$.
Then the following hold:
\begin{enumerate}
\item
The closure of the projection of $\Sigma$ to $\mathbf{H}_{S^\fin}$ contains $\mathbf{H}_{S^\fin}^+$
and it has no non-trivial compact normal subgroups.
\item
The closure of the projection of $\Sigma$ to $\mathbf{H}_{\calV-S}$ is compact.
\item
A finite index subgroup of $\Sigma$ is contained in $\mathbf{H}(\calO[S])$.
\end{enumerate}
\end{theorem}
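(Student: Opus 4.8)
The plan is to bootstrap from the two preceding lemmas. For part (1), I would first invoke Lemma~\ref{thm:sa}: since $\Sigma$ contains a finite index subgroup of $\mathbf{H}(\calO)$ and $\mathbf{H}_{\calV^\infty}$ is noncompact, the closure $\overline{\Sigma}$ of the image of $\Sigma$ in $\mathbf{H}_I$ (where $I\subset\calV^\fin$ is the $K_\nu$-isotropy set) contains an open subgroup of $\mathbf{H}_I^+$. The point is then to upgrade ``open subgroup of $\mathbf{H}_I^+$'' to ``all of $\mathbf{H}_{S^\fin}^+$''. First I would observe $S^\fin\subset I$: if $\mathbf{H}$ is $K_\nu$-anisotropic then $\mathbf{H}(K_\nu)$ is compact, so $\Sigma$ is bounded there, hence $\nu\notin S$. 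Conversely the places in $I\setminus S^\fin$ are exactly those where the projection of $\Sigma$ is bounded, i.e. has compact closure. Using that the projection $\mathbf{H}_I\to\mathbf{H}_{S^\fin}$ is open on the $\#$-subgroups (as in the last paragraph of the proof of Lemma~\ref{lemma:+}, via the open mapping theorem applied to $\prod_{\nu\in I}\mathbf{H}(\calO_\nu)\to\prod_{\nu\in S^\fin}\mathbf{H}(\calO_\nu)$), the image of $\overline{\Sigma}$ in $\mathbf{H}_{S^\fin}$ contains an open subgroup of $\mathbf{H}_{S^\fin}^+$. Now $\mathbf{H}_{S^\fin}^+=i(\tilde{\mathbf{H}}_{S^\fin})$ and $\tilde{\mathbf{H}}$ is simply connected and almost simple, so by strong approximation $\tilde{\mathbf{H}}_{S^\fin}$ is topologically generated by the images of the $\tilde{\mathbf{H}}(K_\nu)^+$; more to the point, $\tilde{\mathbf{H}}(K)$ is dense and has no proper finite-index subgroups mapping onto it, and each $\mathbf{H}(K_\nu)^+$ for $\nu\in S^\fin$ has no proper open subgroup of finite index that is normal — so an open subgroup of $\mathbf{H}_{S^\fin}^+$ whose closure is normalized by the (unbounded, hence by Howe--Moore/Tits-type simplicity arguments "large") image of $\Sigma$ must be everything. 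Concretely: the closure of the image of $\Sigma$ in $\mathbf{H}_{S^\fin}$ is a closed subgroup containing an open subgroup $V$ of $\mathbf{H}_{S^\fin}^+$ and normalized by the image of $\Sigma$, which is dense in $\overline{\Sigma}$; since $\mathbf{H}_{S^\fin}^+$ has no proper open normal subgroups (each local factor $\mathbf{H}(K_\nu)^+$ is generated by unipotents and is "projectively simple"), the normal closure of $V$ is all of $\mathbf{H}_{S^\fin}^+$, giving the containment $\mathbf{H}_{S^\fin}^+\subset\overline{\Sigma}$. The absence of nontrivial compact normal subgroups then follows because $\mathbf{H}_{S^\fin}^+$ itself has none (each $\mathbf{H}(K_\nu)^+$ is a noncompact simple-type group with no compact normal subgroups, as $\mathbf{H}$ is adjoint and $K_\nu$-isotropic) and a compact normal subgroup of $\overline{\Sigma}$ would intersect $\mathbf{H}_{S^\fin}^+$ in a compact normal subgroup of $\mathbf{H}_{S^\fin}^+$, hence be trivial on that factor, but then centralize it, forcing triviality since $\mathbf{H}_{S^\fin}^+$ has trivial centralizer in $\mathbf{H}_{S^\fin}$.

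For part (2), the complement $\calV-S$ consists by definition of the places where $\Sigma$ is bounded in $\mathbf{H}(K_\nu)$. I would argue that the closure of the projection of $\Sigma$ to $\mathbf{H}_{\calV-S}$ is a closed subgroup of the restricted product in which each coordinate projection has relatively compact image; combined with the fact that for all but finitely many $\nu$ the image of $\mathbf{H}(\calO)$ (hence of $\Sigma$, up to finite index and up to the finitely many exceptional places of Corollary~\ref{cor: exceptional places}) lands in the compact group $\mathbf{H}(\calO_\nu)$, Tychonoff gives that the closure sits inside a product of compact groups, namely $\prod_{\nu\in\calV-S}\overline{\mathrm{proj}_\nu(\Sigma)}$ intersected appropriately with the restricted-product topology — which is compact. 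One must be slightly careful that boundedness place-by-place plus the integrality constraint at cofinitely many places indeed forces the closure into a compact set in the restricted product topology; this is where I would spell out that $\Sigma$, being finitely-commensurable with $\mathbf{H}(\calO)$, is contained in $\mathbf{H}(\calO_\nu)$ for all $\nu$ outside a finite set $E$, so the closure is contained in $\prod_{\nu\in E\cap(\calV-S)}\overline{\mathrm{proj}_\nu\Sigma}\times\prod_{\nu\in(\calV-S)\setminus E}\mathbf{H}(\calO_\nu)$, a product of compacts.

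Part (3) is essentially a reformulation: $\mathbf{H}(\calO[S])=\{x\in\mathbf{H}(K): |x|_\nu\le 1 \text{ for } \nu\in\calV^\fin-S\}$ in the chosen matrix embedding, i.e. the $x\in\mathbf{H}(K)$ lying in $\mathbf{H}(\calO_\nu)$ for every finite $\nu\notin S$. From part (2), the projection of $\Sigma$ to $\prod_{\nu\in\calV^\fin-S}\mathbf{H}(K_\nu)$ has compact — in particular bounded — closure, so there is a finite set $E'$ of finite places and, for $\nu\in E'$, a compact open subgroup containing the projection; refining $\Sigma$ to a finite index subgroup $\Sigma'$ (using that $\mathbf{H}(\calO)$-commensurability is preserved and that $\mathbf{H}(\calO_\nu)$ has finite index in any compact open overgroup, or by passing to a congruence subgroup) we arrange that $\Sigma'$ projects into $\mathbf{H}(\calO_\nu)$ for all finite $\nu\notin S$, which is precisely the statement $\Sigma'\subset\mathbf{H}(\calO[S])$. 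The main obstacle I anticipate is part (1): the passage from "$\overline{\Sigma}$ contains an open subgroup of $\mathbf{H}_{S^\fin}^+$" to "$\overline{\Sigma}\supset\mathbf{H}_{S^\fin}^+$", which requires knowing that $\mathbf{H}_{S^\fin}^+$ has no proper open normal subgroups — this rests on the (projective) simplicity and perfectness of the local groups $\mathbf{H}(K_\nu)^+$ for isotropic $\mathbf{H}$ (Tits, Platonov--Rapinchuk) together with strong approximation to glue across the infinitely many places of $S^\fin$ — and, secondarily, verifying the "no compact normal subgroup" claim, for which the adjointness hypothesis on $\mathbf{H}$ is essential.
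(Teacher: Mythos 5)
There is a genuine gap, and it sits exactly at the crux of the theorem, in your part (2). You write that ``$\Sigma$, being finitely-commensurable with $\mathbf{H}(\calO)$, is contained in $\mathbf{H}(\calO_\nu)$ for all $\nu$ outside a finite set $E$''. But the hypothesis is one-sided: $\Sigma$ \emph{contains} a finite index subgroup of $\mathbf{H}(\calO)$; it need not be commensurable with it (take $\Sigma=\mathbf{H}(\calO[S])$ for infinite $S$, or any $\Sigma$ with $S^\fin\neq\emptyset$, where $[\Sigma:\Sigma\cap\mathbf{H}(\calO)]=\infty$). The claim that a finite index subgroup of $\Sigma$ lands in $\mathbf{H}(\calO_\nu)$ for every finite $\nu\notin S$ is essentially conclusion (3), which you then derive from (2) --- so the argument is circular. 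Nor is the point cosmetic: a product of compact subgroups $L_\nu<\mathbf{H}(K_\nu)$ is compact in the restricted product $\mathbf{H}_{\calV-S}$ only if $L_\nu<\mathbf{H}(\calO_\nu)$ for almost all $\nu$, so place-by-place boundedness genuinely does not suffice. The paper supplies exactly this missing input: for all $\nu\notin S$ outside two explicit finite exceptional sets, $i^{-1}(L_\nu)$ is a compact group containing $\tilde{\mathbf{H}}(\calO_\nu)$ (strong approximation for the simply connected cover), hence equal to it by maximality (Lemma~\ref{lem: maximal compact}, Corollary~\ref{cor: exceptional places}); and $\mathbf{H}(\calO_\nu)$ is the \emph{unique} maximal compact subgroup containing $i(\tilde{\mathbf{H}}(\calO_\nu))$, by a Bruhat--Tits fixed-point argument, which forces $L_\nu<\mathbf{H}(\calO_\nu)$.

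There is a second, smaller, defect in your part (1). Your mechanism for upgrading ``$\overline{\Sigma}$ contains an open subgroup $V$ of $\mathbf{H}_{S^\fin}^+$'' to ``$\overline{\Sigma}\supset\mathbf{H}_{S^\fin}^+$'' is that the normal closure of $V$ is everything; but $\overline{\Sigma}$ only contains the $\overline{\Sigma}$-conjugates of $V$, not its $\mathbf{H}_{S^\fin}^+$-conjugates, so this is circular as stated. Worse, the argument never uses that $\Sigma$ is unbounded at the places of $S^\fin$, and the identical reasoning applied to the full isotropy set $I\supset S^\fin$ would ``prove'' the false statement $\overline{\Sigma}\supset\mathbf{H}_I^+$ (false whenever $\Sigma$ is bounded at some isotropic finite place). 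The correct step is local: for each $\nu\in S^\fin$ the closure $L_\nu$ of the projection meets $\mathbf{H}(K_\nu)^+$ in an open \emph{and unbounded} subgroup, which by Tits' theorem (\cite{Prasad}, Theorem~T) must contain $\mathbf{H}(K_\nu)^+$; one then combines this with the openness from Lemma~\ref{thm:sa} and the density in $\mathbf{H}_{S^\fin}^+$ of the subgroup generated by the factors. You do gesture at these ingredients, but the stated argument does not assemble them. Your reduction of (3) to (2) and your handling of the ``no nontrivial compact normal subgroup'' claim are fine.
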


\begin{proof}
In the proof we will use $L_J$ to denote the closure of the diagonal embedding of $\Sigma$ in $\mathbf{H}_J$
for $J\subset \calV$.
We will also use $L_\nu=L_{\{\nu\}}$.

(1) For $\nu\in S$, $L_\nu$ is unbounded.
By \cite[Corollary~I.2.3.1(c2)]{Margulis:book}, $\mathbf{H}(K_\nu)/\mathbf{H}(K_\nu)^+$ is finite, 
thus $L_\nu\cap \mathbf{H}(K_\nu)^+$ is unbounded.
By Lemma~\ref{thm:sa}, $L_\nu\cap \mathbf{H}(K_\nu)^+$ is open in $\mathbf{H}(K_\nu)^+$.
It follows from \cite[Theorem~T]{Prasad} that $\mathbf{H}(K_\nu)^+<L_\nu$.
The group generated by the groups $\mathbf{H}(K_\nu)^+$ for $\nu\in S$ is dense in $\mathbf{H}_S^+$.
Since  $L_S$ contains an open subgroup of $\mathbf{H}_S^+$ by Lemma~\ref{thm:sa},
it actually contains $\mathbf{H}_S^+$,
that is $\mathbf{H}_S^+<L_S<\mathbf{H}_S$.
Assume $N<L_S$ is a non-trivial compact normal subgroup.
Then there exists $\nu\in S$ such that the image of $N$, $\bar{N}$, is non trivial under the projection $\mathbf{H}_S\to \mathbf{H}(K_\nu)$.
By \cite[Theorem~I.1.5.6(i)]{Margulis:book}, either $\mathbf{H}(K_\nu)^+<\bar{N}$ or $\bar{N}$ is central.
Both cases are impossible, the first by the compactness of $N$ and the second by $\mathbf{H}$ being adjoint.

(2) First we define two finite exceptional sets $F_1,F_2\subset \calV^\fin$ of finite places. 
We let $i:\tilde{\mathbf{H}}\to \mathbf{H}$ again denote the canonical central isogeny of the simply connected form $\tilde{\mathbf{H}}$ of $\mathbf{H}$. 

According to Lemma~\ref{lem: maximal compact} and Corollary~\ref{cor: exceptional places} there is a finite set $F_1$ of finite places 
such that for $\nu\in\calV^\fin-F_1$ we have that 
$i^{-1}(\bfH(\calO_\nu))=\tilde\bfH(\calO_\nu)$ and both 
$\tilde\bfH(\calO_\nu)$ and $\bfH(\calO_\nu)$ are maximal compact 
subgroups in $\tilde\bfH(K_\nu)$ and $\bfH(K_\nu)$, respectively. 
For $\nu\in \calV^\fin-F_1$, $\mathbf{H}(\calO_\nu)$ is the unique compact open subgroup of 
$\mathbf{H}(K_\nu)$ that contains
$i(\tilde{\mathbf{H}}(\calO_\nu))$.
To see this consider the Bruhat-Tits building of $\mathbf{H}(K_\nu)$ with its $\tilde{\mathbf{H}}(K_\nu)$-action via $i$. The subgroup 
$\tilde{\mathbf{H}}(\calO_\nu)$ has a unique fixed point, hence it is contained in a unique maximal compact of $\mathbf{H}(K_\nu)$.

Next we define $F_2$.
By \cite{Platonov+Rapinchuk}*{Theorem~7.12} the simply connected group $\tilde{\mathbf{H}}$
satisfies the strong approximation property.
In particular, the image of $\tilde{\mathbf{H}}(K)$ in $\tilde{\mathbf{H}}_{\calV^\fin}$ is dense.
It follows that the image of $\tilde{\mathbf{H}}(\calO)$ is dense in $\prod_{\nu\in \calV^\fin} \tilde{\mathbf{H}}(\calO_\nu)$.
Therefore the image of any finite index subgroup of $\tilde{\mathbf{H}}(\calO)$ has an open closure in $\tilde{\mathbf{H}}_{\calV^\fin}$.  Hence $\overline{i^{-1}(\Sigma)}<\tilde{\mathbf{H}}_{\calV^\fin}$ is open. 
It follows that $\overline{i^{-1}(\Sigma)}$ contains a finite index subgroup of the compact group 
$\prod_{\calV^\fin}\tilde{\mathbf{H}}(\calO_\nu)$. 
Hence there exists a finite set $F_2\subset \calV^\fin$ 
such that for every $\nu\in \calV^\fin-F_2$, $\tilde{\mathbf{H}}(\calO_\nu)<\overline{i^{-1}(\Sigma)}$.

We now fix $\nu\in \calV^\fin-S-F_1-F_2$ and claim that $L_\nu<\mathbf{H}(\calO_\nu)$.
Observe first that $i^{-1}(L_\nu)=\tilde{\mathbf{H}}(\calO_\nu)$.
Indeed, $i^{-1}(L_\nu)$ is a compact group because of $\nu\notin S$  that contains 
$\tilde{\mathbf{H}}(\calO_\nu)$ because of $\nu\notin F_2$. 
Hence it coincides with it because of $\nu\notin F_1$.
It follows $i(\tilde{\mathbf{H}}(\calO_\nu))<L_\nu$, and 
the claim follows by the fact that for $\nu\in \calV^\fin-F_1$, $\mathbf{H}(\calO_\nu)$
is the unique maximal compact subgroup which contains $i(\tilde{\mathbf{H}}(\calO_\nu))$.

We conclude that $L_{V-S}$ is contained in the compact group
\[ 
	\Bigl(\prod_{(\calV-S)\cap (\calV^\infty\cup F_1\cup F_2)} L_\nu\Bigr) 
	\times \Bigl(\prod_{\calV-S-(\calV^\infty\cup F_1\cup F_2)} \mathbf{H}(\calO_\nu)\Bigr),
\]
hence it is compact, proving our second point.

(3) By compactness a finite index subgroup of $L_{\calV^\fin-S}$ is contained $\prod_{\calV^\fin-S} \mathbf{H}(\calO_\nu)$,
hence its preimage in $\Sigma$ is contained in $\mathbf{H}(\calO[S])$.
\end{proof}

Note that in the setting of Theorem~\ref{thm:algsetting}, $\mathbf{H}_{S^\infty}$ is a center-free, semisimple real Lie group without compact factors,
the image of $\mathbf{H}(\calO)$ in $\mathbf{H}_{S^\infty}$ under the diagonal embedding is an irreducible lattice 
which is commensurated by the image of $\Sigma$.
Moreover, $S^\fin\neq \emptyset$ if and only if $\mathbf{H}(\calO)$ is of infinite index in $\Sigma$.
The following is a sort of a converse to this situation.

\begin{theorem}[Commensurator Super-Rigidity]\label{thm: margulis lemma}\hfill{}\\
Let $H$ be a connected, center-free, semisimple real Lie group without compact factors.
Let $\Delta<H$ be an irreducible lattice.
	Let $\Lambda$ be a group such 
	that $\Delta<\Lambda<\Commen_{H}(\Delta)$ and $[\Lambda:\Delta]=\infty$. Then
there exist 
a number field $K$, 
a set of places $S$,
a connected, adjoint, almost simple $K$-algebraic group $\mathbf{H}$ 
and a subgroup $\Sigma<\mathbf{H}(K)$ and an isomorphism of topological groups
$\phi:\mathbf{H}_{S^\infty}^0 \xrightarrow{\simeq} H$ so that
\begin{enumerate}
\item
	$\phi^{-1}(\Lambda)$ equals the diagonal image of $\Sigma$ in $\mathbf{H}_{S^\infty}$,
\item	 
	$\phi^{-1}(\Delta)$ in $\Sigma$ is commensurated with $\mathbf{H}(\calO)$.
\item 
	a finite index subgroup of $\Sigma$ is contained in $\mathbf{H}(\calO[S])$.
\item
The closure of the projection of $\Sigma$ to $\mathbf{H}_{S^\fin}$ contains $\mathbf{H}_{S^\fin}^+$
and it has no non-trivial compact normal subgroups.
\item $S$ is compatible with $\mathbf{H}$ (see Definition~\ref{def:compatible}).
\end{enumerate}
\end{theorem}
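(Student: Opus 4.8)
The plan is to deduce the statement from Margulis' commensurator super-rigidity (arithmeticity) theorem together with the already established Theorem~\ref{thm:algsetting}; the only real work is to reconcile the two.

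\textbf{Step 1 (reduce to an arithmetic lattice).} First I would check that $\Commen_H(\Delta)$ is dense in $H$. Let $M=\overline{\Commen_H(\Delta)}$, a closed subgroup containing $\Delta$. As $\Delta$ is Zariski dense in $H$ by the Borel density theorem, $M^0$ is normalized by $H$, hence a product of simple factors, and irreducibility of $\Delta$ forces $M^0\in\{1,H\}$; if $M^0=H$ then $M=H$, and if $M^0=1$ then $M$ is discrete, so $\Delta<M$ has finite index by Lemma~\ref{L:intermediate}, contradicting $[\Lambda:\Delta]=\infty$. Thus $\Commen_H(\Delta)$ is dense, and Margulis' theorem applies: there are a number field $K$, a connected, adjoint, absolutely almost simple (in particular almost simple) $K$-group $\mathbf{H}$, and a topological isomorphism $\phi$ from the identity component of the noncompact part of $\prod_{\nu\in\calV^\infty}\mathbf{H}(K_\nu)$ onto $H$ under which $\Delta$ corresponds to a subgroup commensurable with the diagonal image of $\mathbf{H}(\calO)$ and $\Commen_H(\Delta)$ corresponds to the diagonal image of $\mathbf{H}(K)$ (intersected with that identity component). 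For archimedean $\nu$ one has $\mathbf{H}(K_\nu)$ noncompact exactly when $\mathbf{H}$ is $K_\nu$-isotropic, and $\mathbf{H}(K_\nu)^0=\mathbf{H}(K_\nu)^+$; hence the source of $\phi$ is precisely $\mathbf{H}_{S^\infty}^0$ with $S^\infty=\{\nu\in\calV^\infty\mid\mathbf{H}\text{ is }K_\nu\text{-isotropic}\}$. Let $\Sigma<\mathbf{H}(K)$ be the subgroup whose diagonal image satisfies $\phi(\Sigma)=\Lambda$; this yields properties (1) and (2). Since $\Lambda\supsetneq\Delta$ the group $H$ is nontrivial, so $\mathbf{H}_{\calV^\infty}$ is noncompact.

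\textbf{Step 2 (apply Theorem~\ref{thm:algsetting}).} Because $\phi^{-1}(\Delta)$ is commensurable with $\mathbf{H}(\calO)$ and lies in $\Sigma$, the group $\Sigma$ contains a finite index subgroup of $\mathbf{H}(\calO)$, so the hypotheses of Theorem~\ref{thm:algsetting} hold. Put $S=\{\nu\in\calV\mid\Sigma\text{ is unbounded in }\mathbf{H}(K_\nu)\}$. Then Theorem~\ref{thm:algsetting}(1) is property (4) and Theorem~\ref{thm:algsetting}(3) is property (3). Its archimedean part coincides with the $S^\infty$ above: a finite index subgroup of $\mathbf{H}(\calO)$ contained in $\Sigma$ is a lattice in $\mathbf{H}_{\calV^\infty}$ and so projects unboundedly onto every noncompact (i.e.\ isotropic) archimedean factor, while $\mathbf{H}(K)$ is bounded in $\mathbf{H}(K_\nu)$ for anisotropic archimedean $\nu$ since that group is compact. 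Hence $\phi\colon\mathbf{H}_{S^\infty}^0\to H$ is the isomorphism required by the statement.

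\textbf{Step 3 (compatibility) and the main obstacle.} Condition (2) of Definition~\ref{def:compatible} is the identification of archimedean parts just made. For condition (1): were $\mathbf{H}$ $K_\nu$-anisotropic at a finite $\nu\in S$, then $\mathbf{H}(K_\nu)$, hence $\Sigma$, would be bounded there, contradicting $\nu\in S$; with the archimedean case this gives $K_\nu$-isotropy for all $\nu\in S$. For condition (3): noncompactness of $\mathbf{H}_{\calV^\infty}$ gives an isotropic archimedean place, so $S^\infty\neq\emptyset$; and if $S^\fin=\emptyset$ then by Theorem~\ref{thm:algsetting}(3) a finite index subgroup of $\Sigma$ would lie in $\mathbf{H}(\calO[S])=\mathbf{H}(\calO)$, making $\Sigma$ commensurable with $\mathbf{H}(\calO)$ and hence with $\phi^{-1}(\Delta)$, contradicting $[\Lambda:\Delta]=\infty$; so $S^\fin\neq\emptyset$. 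The argument involves no hard computation; the delicate points are extracting from Margulis' theorem the precise identification of $\Commen_H(\Delta)$ with $\mathbf{H}(K)$ for adjoint $\mathbf{H}$ (which is what makes $\Sigma$ well defined) and checking that the arithmetically produced set $S^\infty$ agrees with the archimedean places at which $\Sigma$ is unbounded, so that Theorem~\ref{thm:algsetting} can be grafted on. The hypothesis $[\Lambda:\Delta]=\infty$ is used twice: to force density of the commensurator, and to force $S^\fin\neq\emptyset$.
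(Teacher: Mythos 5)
Your proposal is correct and follows essentially the same route as the paper: Margulis' commensurator arithmeticity, identification of $\Commen_{H}(\Delta)$ with $\mathbf{H}(K)$ for the adjoint form, and then Theorem~\ref{thm:algsetting} applied to $\Sigma$ with $S$ the set of places where $\Sigma$ is unbounded. The one step you cite rather than prove --- that Margulis' theorem can be packaged so that $\mathbf{H}$ is adjoint, the source is $\mathbf{H}_{S^\infty}^0$, and $\Commen_{H}(\Delta)$ corresponds exactly to $\mathbf{H}(K)$ --- is precisely where the paper's proof invests its effort (passing from the simply connected group of \cite[Definitions~IX.1.4]{Margulis:book} to $\mathbf{H}_I^0$ and invoking Theorem~\ref{thm:appcomm}), while your explicit verifications of commensurator density, of $S^\infty=I$, and of compatibility supply details the paper leaves implicit.
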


\begin{proof}
Let $\mathbf{G}$ be the Zariski closure of the image of $H$ in its adjoint representation. 
Then $\mathbf{G}$ is a real algebraic group and $\Ad(H)=\mathbf{G}(\bbR)^0$ (see the beginning of Section~6 of~\cite[IX]{Margulis:book}).
By the center-free assumption on $H$, $H\cong \Ad(H)$. Thus we may and will identify $H$ with $\mathbf{G}(\bbR)^0$.

We apply \cite[Theorem IX.6.5]{Margulis:book}
and get that $\Delta$ is arithmetic in $\mathbf{G}$.
For the definition of arithmeticity see~\cite[Definitions~IX.1.4]{Margulis:book}.
According to this definition there exists a number field $K$, a connected non-commutative absolutely almost simple $K$-algebraic group $\tilde{\mathbf{H}}$, a finite set of places $T\subset \calV$ containing all archimedean places $\nu$ for which $\tilde{\mathbf{H}}$ is $K_\nu$-isotropic
and a continuous homomorphism $\tilde{\phi}\colon\tilde{\mathbf{H}}_T \to \mathbf{G}(\bbR)$
such that $\tilde{\phi}(\tilde{\mathbf{H}}(\calO[T]))$ is commensurable with $\Delta$, where $\calO$ is the ring of integers in $K$.
By \cite[Remark~IX.1.6(i)]{Margulis:book} we may and will assume that $\tilde{\mathbf{H}}$ is simply 
connected (justifying our choice of notation). 

Working through the intricate definition of the class $\Psi$ appearing in \cite[Definitions~IX.1.4]{Margulis:book}, which is given in \cite[IX.1.3]{Margulis:book},
and taking into account that $\mathbf{G}$ is an $\bbR$-group, we get that $T\subset \calV^{\infty}$.
In particular, $\calO[T]=\calO$
and $\tilde{\phi}(\tilde{\mathbf{H}}(\calO))$ is commensurable with $\Delta$.
By \cite[Remark~IX.1.3(ii)]{Margulis:book}, $\Ker(\tilde{\phi})$ is finite and $H=\mathbf{G}(\bbR)^0<\tilde{\phi}(\tilde{\mathbf{H}}_T)$.
The facts above are also explained in \cite[IX.1.5(*)]{Margulis:book}.

Denoting by $I\subset \calV^{\infty}$ the set of archimedean  places $\nu$ such that $\tilde{\mathbf{H}}$ is $K_\nu$-isotropic,
we get $I\subset T \subset \calV^{\infty}$.
We denote by $\mathbf{H}$ the adjoint form of $\tilde{\mathbf{H}}$.
Let $p$ be the composition of the natural projection $\tilde{\mathbf{H}}_T\to \tilde{\mathbf{H}}_I$ and the isogeny $\tilde{\mathbf{H}}_I\to \mathbf{H}_I$.
By the choice of $I$, $p$ has a compact kernel.
By \cite[Theorem I.1.2.3.1(a')]{Margulis:book}, $\tilde{\mathbf{H}}_I=\tilde{\mathbf{H}}_I^+$
and by \cite[Theorem I.1.5.5]{Margulis:book}
its image in $\mathbf{H}_I$ is $\mathbf{H}_I^+$, which equals $\mathbf{H}_I^0$ by
\cite[Theorem I.1.2.3.1(a)2)]{Margulis:book}.
Note that $H=\mathbf{G}(\bbR)^0=\mathbf{G}(\bbR)^+$ has no non-trivial compact normal subgroups, by assumption.
It follows from \cite[Theorem I.1.5.6(i)]{Margulis:book} that also $\tilde{\phi}(\tilde{\mathbf{H}}_T)$ has no normal subgroup,
as it contains $\mathbf{G}(\bbR)^+$.
From the compactness of $\Ker(p)$ we conclude that $\tilde{\phi}$ factors through the image of $p$, namely $\mathbf{H}_I^0$.
We denote the map thus obtained by $\phi$.
Since the image of $\phi$ is a connected subgroup of $\mathbf{G}(\bbR)$ containing $H$,
it must be $H$,
and since $\mathbf{H}_I^0$ has no compact normal subgroups, we may view
$\phi$ as a continuous isomorphism, $\phi\colon \mathbf{H}_I^0 \cong H$.
By a standard Baire category argument $\phi$ is also a homeomorphism.
In particular, $I\neq\emptyset$.

By \cite[Corollary~I.3.2.9]{Margulis:book} the image of $\tilde{\mathbf{H}}(\calO)$ is commensurated with $\mathbf{H}(\calO)$ in $\mathbf{H}(K)$,
which we now embed diagonally in $\mathbf{H}_I$.
Since $\tilde{\phi}(\mathbf{H}(\calO))$ is commensurated with $\Delta$ in $H$ we conclude that $\phi^{-1}(\Delta)$ is commensurated with $\mathbf{H}(\calO)$ in $\mathbf{H}_I$.

Note that for every $\nu\in I$, $\mathbf{H}(K_\nu)$ is not compact.
By Theorem~\ref{thm:appcomm} the commensurator of $\mathbf{H}(\calO)$ in $\mathbf{H}_I$ is $\mathbf{H}(K)$.
Thus, $\phi^{-1}(\Lambda)$ is contained in the diagonal image of $\mathbf{H}(K)$.
We denote by $\Sigma<\mathbf{H}(K)$ its preimage.
We can now consider the preimage of $\Delta$ in $\Sigma$ and we conclude that it is commensurated with $\mathbf{H}(\calO)$.
We are now in the setting of Theorem~\ref{thm:algsetting}, and everything else follows. 
\end{proof}

\section{Arithmetically constructed lattices}\label{sec: arithmetic lattices}

Throughout, we retain the notation of Section~\ref{sub: intro-notation} and 
fix number fields $K_1,\dots,K_n$, connected, non-commutative, absolutely simple $K_i$-groups $\mathbf{H}_i$, 
and (possibly infinite) compatible 
subsets $S_i\subset \calV_i$ of places of $K_i$ (see Definition~\ref{def:compatible}).
By convention, unspecified products are to be taken from $1$ to $n$, e.g by $\prod K_i$ we mean $\prod_{i=1}^n K_i$.

\begin{lemma} \label{lem:acl}
$\prod (\mathbf{H}_i)_{S_i^\fin}^\#$ contains $\prod (\mathbf{H}_i)_{S_i^\fin}^+$ as an open normal subgroup, 
and the discrete group $\prod (\mathbf{H}_i)_{S_i^\fin}^\#/\prod (\mathbf{H}_i)_{S_i^\fin}^+$ is a torsion abelian group.
\end{lemma}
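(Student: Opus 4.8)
The plan is to reduce everything to the single-factor case and then apply Lemma~\ref{lemma:+}. First I would observe that a product of open subgroups is open and a product of normal subgroups is normal, and that $\prod_i (\mathbf{H}_i)_{S_i^\fin}^+ = \prod_i (\mathbf{H}_i)_{S_i^\fin} \cap \prod_i (\mathbf{H}_i)_{S_i^\fin}^+$ inside $\prod_i (\mathbf{H}_i)_{S_i^\fin}^\#$ is just the product of the factorwise inclusions $(\mathbf{H}_i)_{S_i^\fin}^+ < (\mathbf{H}_i)_{S_i^\fin}^\#$; so it suffices to treat each factor separately. Thus the openness and normality claims follow immediately once we know that each $(\mathbf{H}_i)_{S_i^\fin}^+$ is open and normal in $(\mathbf{H}_i)_{S_i^\fin}^\#$.

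For a single factor, write $K = K_i$, $\mathbf{H} = \mathbf{H}_i$, $S = S_i$, $J = S^\fin$. Since $S$ is compatible with $\mathbf{H}$ (Definition~\ref{def:compatible}), every $\nu \in J$ is a finite place at which $\mathbf{H}$ is $K_\nu$-isotropic, so $J \subset I$ in the notation of Lemma~\ref{lemma:+}, where $I \subset \calV^\fin$ is the set of finite places of isotropy. Also, compatibility forces $S$ to contain at least one infinite place at which $\mathbf{H}$ is isotropic, so $\mathbf{H}_{\calV^\infty}$ is not compact, which is the running hypothesis of Lemma~\ref{lemma:+}. Then part~(3) of that lemma gives exactly that $\mathbf{H}_J^+$ is an open subgroup of $\mathbf{H}_J^\#$. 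Normality of $\mathbf{H}_J^+$ in $\mathbf{H}_J$ (hence in $\mathbf{H}_J^\#$) is standard: $\mathbf{H}(K_\nu)^+$ is normal in $\mathbf{H}(K_\nu)$ for each $\nu$ by~\cite{borel+tits-abstract}*{Section~6}, and $\mathbf{H}(\calO_\nu)^+ := \mathbf{H}(\calO_\nu) \cap \mathbf{H}(K_\nu)^+$ is normal in $\mathbf{H}(\calO_\nu)$, so the restricted product $\mathbf{H}_J^+$ is normal in $\mathbf{H}_J$.

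It remains to identify the quotient $\prod_i (\mathbf{H}_i)_{S_i^\fin}^\# / \prod_i (\mathbf{H}_i)_{S_i^\fin}^+$ as a torsion abelian group. Since it is a product of the factorwise quotients $(\mathbf{H}_i)_{S_i^\fin}^\# / (\mathbf{H}_i)_{S_i^\fin}^+$, and a finite product of torsion abelian groups is torsion abelian, it suffices to handle one factor $Q_\nu := \mathbf{H}(K_\nu)/\mathbf{H}(K_\nu)^+$ first and then pass to the restricted product. By~\cite{Margulis:book}*{Corollary~I.2.3.1(c2)} each $\mathbf{H}(K_\nu)/\mathbf{H}(K_\nu)^+$ is finite, and it is abelian since $\mathbf{H}(K_\nu)^+$ contains the commutator subgroup (being the subgroup generated by unipotents, with abelian quotient — this is part of the Borel–Tits description, or follows because $\mathbf{H}$ is generated by its unipotent radicals of parabolics over the algebraic closure and the quotient is a quotient of $\mathbf{H}(K_\nu)/[\mathbf{H}(K_\nu),\mathbf{H}(K_\nu)]$). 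For the restricted product $\mathbf{H}_J = \varinjlim_{F} \bigl(\prod_{\nu \in F}\mathbf{H}(K_\nu) \times \prod_{\nu \in J - F}\mathbf{H}(\calO_\nu)\bigr)$ over finite $F \subset J$, the quotient $\mathbf{H}_J / \mathbf{H}_J^+$ is a subquotient of $\prod_{\nu \in J} Q_\nu$, hence abelian, and every element has some representative supported on finitely many places (i.e. lying in $\mathbf{H}(\calO_\nu)$ for all but finitely many $\nu$), so it lies in a subgroup isomorphic to a subquotient of a finite product of the finite groups $Q_\nu$ — therefore it is torsion. Restricting from $\mathbf{H}_J / \mathbf{H}_J^+$ to the subgroup $\mathbf{H}_J^\# / \mathbf{H}_J^+$ preserves these properties, and we are done.

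The main obstacle is really just bookkeeping: checking that the hypotheses of Lemma~\ref{lemma:+} (in particular non-compactness of $\mathbf{H}_{\calV^\infty}$ and the inclusion $S^\fin \subset I$) are genuinely supplied by the compatibility assumption, and being careful that the colimit topology on the restricted product interacts correctly with the quotient so that "torsion" survives — i.e. that an element of $\mathbf{H}_J^\#$ can be multiplied into $\mathbf{H}_J^+$ by something of finite order. No deep input beyond the cited Margulis and Borel–Tits results is needed.
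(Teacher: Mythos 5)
Your reduction to a single factor, the verification that compatibility supplies the hypotheses of Lemma~\ref{lemma:+} (namely $S^\fin\subset I$ and non-compactness of $\mathbf{H}_{\calV^\infty}$), and the openness and normality arguments are correct and follow the same route as the paper, which invokes Lemma~\ref{lemma:+} for openness and \cite{Margulis:book}*{I.2.3.1(b)} for normality of the local groups $\mathbf{H}(K_\nu)^+$.

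The gap is in the torsion claim. You argue that an element of $\mathbf{H}_J^\#/\mathbf{H}_J^+$ is ``supported on finitely many places'', i.e.\ that lying in $\mathbf{H}(\calO_\nu)$ for almost all $\nu$ forces the image in $Q_\nu=\mathbf{H}(K_\nu)/\mathbf{H}(K_\nu)^+$ to vanish at almost all $\nu$, so that the element sits inside a finite product of finite groups. That is false: $\mathbf{H}(\calO_\nu)$ is in general \emph{not} contained in $\mathbf{H}(K_\nu)^+$. The paper's own example makes this explicit for $\mathbf{H}=\PGL_2$: the determinant modulo squares maps $\PGL_2(\bbZ_p)$ onto $\bbZ_p^\ast/(\bbZ_p^\ast)^2\neq 1$, while $\PGL_2(\bbQ_p)^+=\ker\det$. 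So an element of the restricted product can have non-trivial image in $Q_\nu$ for infinitely many $\nu$, and since an infinite product of finite abelian groups need not be torsion, finiteness of each $Q_\nu$ alone does not close the argument. The repair is a uniform bound on the exponents: the exact sequence $1\to\mathcal Z\to\tilde{\mathbf{H}}\to\mathbf{H}\to 1$ with $\mathcal Z$ the finite center of the simply connected cover embeds $Q_\nu=\mathbf{H}(K_\nu)/i(\tilde{\mathbf{H}}(K_\nu))$ into $H^1(K_\nu,\mathcal Z)$, which is killed by $|\mathcal Z|$ independently of $\nu$. Hence $\prod_\nu Q_\nu$ is abelian of exponent dividing $|\mathcal Z|$, and $\mathbf{H}_J^\#/\mathbf{H}_J^+$, being a subgroup of it (the kernel of $\mathbf{H}_J\to\prod_\nu Q_\nu$ is exactly $\mathbf{H}_J^+$), is torsion. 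This uniform statement is what the paper's citation of \cite{Margulis:book}*{I.2.3.1(c)} is carrying; everything else in your write-up stands.
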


\begin{proof}
By Theorem~\ref{thm:algsetting}, $\prod (\mathbf{H}_i)_{S_i^\fin}^\#$ contains $\prod (\mathbf{H}_i)_{S_i^\fin}^+$. Normality is proved in~\cite{Margulis:book}*{I.2.3.1 (b)}. 
Openness follows from Lemma~\ref{lemma:+}.
The quotient is a torsion abelian group by \cite[I.2.3.1(c)]{Margulis:book}.
\end{proof}

\begin{proof}[Proof of {Proposition~\ref{ex:lattices}}]
By Lemma~\ref{lem:acl} we are left to show that 
\[ \Gamma_A < \prod (\mathbf{H}_i)_{S_i^\infty}\times D_A \]
is indeed a lattice.
By reduction theory, 
\[ \prod \mathbf{H}_i(\calO_i[S_i]) < \prod (\mathbf{H}_i)_{S_i} = \prod (\mathbf{H}_i)_{S_i^\infty}\times \prod (\mathbf{H}_i)_{S_i^\fin} \]
is  a lattice. By Lemma~\ref{L:intermediate}, $\prod \mathbf{H}_i(\calO_i[S_i])$ is a lattice in $\prod (\mathbf{H}_i)_{S_i^\infty}\times \prod (\mathbf{H}_i)_{S_i^\fin}^\#$. 
The claim follows from Lemma~\ref{L:open} and the openness statement 
in Lemma~\ref{lem:acl}. 
\end{proof}

\begin{lemma} \label{lem:subthmcore}
Let $\Gamma$ be a subgroup of $\prod \mathbf{H}_i(K_i)$
and $D$ be the closure of the diagonal embedding of $\Gamma$ to $\prod (\mathbf{H}_i)_{S_i^\fin}$. We assume: 
\begin{enumerate}
	\item $\Gamma$ contains a finite index subgroup of $\prod \mathbf{H}_i(\calO_i)$. 
	\item A finite index subgroup of $\Gamma$ is contained in $\prod \mathbf{H}_i(\calO_i[S_i])$. 
	\item $(\mathbf{H}_i)_{S_i^\fin}^+$ is contained in the projection of $D$ to $(\mathbf{H}_i)_{S_i^\fin}$ for each $i\in\{1,\dots,n\}$. 
\end{enumerate}
Then $\Gamma$ is an arithmetically constructed lattice.
\end{lemma}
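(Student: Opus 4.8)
The plan is to produce the subgroup $A$ and the group $\Gamma_A$ of Proposition~\ref{ex:lattices} directly from the data of the lemma. First I would observe that the hypotheses are exactly what is needed to apply Theorem~\ref{thm:algsetting} factor by factor: for each $i$, the group $\Sigma_i \defq \pr_i(\Gamma) < \mathbf{H}_i(K_i)$ contains a finite index subgroup of $\mathbf{H}_i(\calO_i)$ by (1), and a finite index subgroup of it is contained in $\mathbf{H}_i(\calO_i[S_i])$ by (2); moreover compatibility of $S_i$ with $\mathbf{H}_i$ together with hypothesis (3) forces the set of places where $\Sigma_i$ is unbounded to be exactly $S_i$ (the inclusion $\supseteq$ at non-archimedean places comes from $(\mathbf{H}_i)^+_{S_i^\fin}$ being unbounded in each factor by (3), the inclusion at archimedean places from compatibility, and the reverse inclusion from (2)). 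So Theorem~\ref{thm:algsetting} applies with $S = S_i$.

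Next I would analyze the closure $D$. Since $D$ is the closure of the diagonal image of $\Gamma$ in $\prod (\mathbf{H}_i)_{S_i^\fin}$, its projection to the $i$-th factor is the closure of the image of $\Sigma_i$, which by Theorem~\ref{thm:algsetting}(1) contains $(\mathbf{H}_i)_{S_i^\fin}^+$ and has no non-trivial compact normal subgroup; combined with (3) this pins down that $\pr_i(D)$ is a closed subgroup between $(\mathbf{H}_i)_{S_i^\fin}^+$ and $(\mathbf{H}_i)_{S_i^\fin}^\#$ (it lies in $(\mathbf{H}_i)_{S_i^\fin}^\#$ because $\Gamma \subset \prod \mathbf{H}_i(K_i)$ and $(\mathbf{H}_i)_{S_i^\fin}^\#$ is by definition the closure of the image of $\mathbf{H}_i(K_i)$). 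Set $A$ to be the image of $D$ in $\prod \left((\mathbf{H}_i)_{S_i^\fin}^\#/(\mathbf{H}_i)_{S_i^\fin}^+\right)$ under the product of the quotient maps; this uses Lemma~\ref{lem:acl} to know the quotients are discrete abelian torsion groups and that $(\mathbf{H}_i)_{S_i^\fin}^+$ is open normal, so $A$ makes sense. One must check that $D$ is contained in the preimage $D_A$ of $A$ — but here $D$ is actually sandwiched $\prod (\mathbf{H}_i)_{S_i^\fin}^+ \le D \le D_A$, so combined with $\prod (\mathbf{H}_i)_{S_i^\fin}^+$ being open this gives $D = D_A$ (openness of $\prod (\mathbf{H}_i)_{S_i^\fin}^+$ forces any subgroup between it and its preimage to be the full preimage).

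Then I would identify $\Gamma$ with a subgroup of $\Gamma_A$ of finite index. By construction $\Gamma$ maps into $\prod \mathbf{H}_i(\calO_i[S_i])$ on a finite index subgroup (hypothesis (2)) and, via the diagonal embedding into $\prod (\mathbf{H}_i)_{S_i^\fin}$, lands in $D = D_A$; hence a finite index subgroup of $\Gamma$ lies in $\Gamma_A = $ (preimage of $D_A$ in $\prod \mathbf{H}_i(\calO_i[S_i])$). Conversely I would argue $\Gamma$ has finite index in $\Gamma_A$: both contain a common finite index subgroup of $\prod \mathbf{H}_i(\calO_i)$ by (1), and $\Gamma_A$ is a lattice in $\prod (\mathbf{H}_i)_{S_i^\infty} \times D_A$ by Proposition~\ref{ex:lattices} while $\Gamma$ — being commensurable with the arithmetic lattice $\prod \mathbf{H}_i(\calO_i[S_i])\cap \Gamma_A$, which is also a lattice there by the same reduction-theory argument as in the proof of Proposition~\ref{ex:lattices} — is itself discrete of finite covolume in the same group once we note $\Gamma$ sits between an arithmetic lattice and $\Gamma_A$; Lemma~\ref{L:intermediate} then gives finite index. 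By Definition~\ref{def:arithcon}, $\Gamma$ (possibly after passing to the overgroup $\Gamma_A$, i.e.\ recalling that the definition only requires $\Gamma$ to \emph{contain} some $\Gamma_A$ with finite index — here we instead have $\Gamma$ finite-index \emph{in} $\Gamma_A$, so we must take the subgroup-of-finite-index direction, meaning $\Gamma_A \cap \Gamma$ plays the role and $\Gamma \supset \Gamma_{A'}$ for a suitable smaller $A'$, or simply observe $\Gamma$ and $\Gamma_A$ are commensurable and Definition~\ref{def:arithcon} is commensurability-stable) is an arithmetically constructed lattice.

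The main obstacle I anticipate is the bookkeeping in the last paragraph: matching the two directions of "finite index" in Definition~\ref{def:arithcon} (where $\Gamma$ must \emph{contain} $\Gamma_A$ with finite index) against what the hypotheses naturally give (where $\Gamma$ is naturally finite-index \emph{inside} a $\Gamma_A$), and verifying that $\Gamma$ is genuinely discrete with finite covolume in $\prod (\mathbf{H}_i)_{S_i^\infty} \times D$ rather than merely sitting inside a lattice. This is handled by a commensurability argument anchored on the arithmetic group $\prod \mathbf{H}_i(\calO_i[S_i])$ from hypotheses (1) and (2), using Lemmas~\ref{L:reducing}, \ref{L:intermediate}, and \ref{L:open} exactly as in the proof of Proposition~\ref{ex:lattices}; the key point is that Definition~\ref{def:arithcon} is insensitive to commensurability, so it suffices to exhibit \emph{some} $\Gamma_A$ commensurable with $\Gamma$, which the construction above does.
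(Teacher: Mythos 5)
Your plan has a genuine gap at its central step. You assert that $D$ is ``sandwiched'' as $\prod (\mathbf{H}_i)_{S_i^\fin}^+ \le D \le D_A$, but you never prove the left-hand containment, and it is precisely the heart of the lemma. Hypothesis (3) and your factor-by-factor application of Theorem~\ref{thm:algsetting} only tell you that each \emph{projection} $\pr_i(D)$ contains $(\mathbf{H}_i)_{S_i^\fin}^+$; this does not imply that the closed subgroup $D$ of the product contains $\prod (\mathbf{H}_i)_{S_i^\fin}^+$ (a priori $D$ could be a graph-like or diagonal-like subgroup surjecting onto each factor). The paper's proof spends its first and main paragraph exactly on this point: it passes to the preimage $\tilde{D}$ in $\prod(\tilde{\mathbf{H}}_i)_{S_i^\fin}$, shows $\tilde{D}$ is open via strong approximation and hypothesis~(1), observes that for each $i$ and $\nu\in S_i^\fin$ the open subgroup $\tilde{D}\cap\tilde{\mathbf{H}}_i(K_\nu)$ is normal in $\tilde{D}$ and hence --- because $\tilde D$ surjects onto $\tilde{\mathbf{H}}_i(K_\nu)$ by hypothesis~(3) --- normal in $\tilde{\mathbf{H}}_i(K_\nu)$, and then invokes \cite[Theorem~I.1.5.6(i)]{Margulis:book} to conclude $\tilde{\mathbf{H}}_i(K_\nu)<\tilde{D}$; these factors generate a dense subgroup, so $\tilde D$ is everything and Lemma~\ref{lemma:+} gives $\prod (\mathbf{H}_i)_{S_i^\fin}^+<D$. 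Some argument of this kind is unavoidable, and your proposal does not contain it.

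Your final paragraph also overcomplicates the bookkeeping and leans on an unproved claim that Definition~\ref{def:arithcon} is ``commensurability-stable'' (the definition requires $\Gamma$ to \emph{contain} some $\Gamma_A$ with finite index, which is not formally a commensurability-invariant condition). The paper sidesteps this entirely: it sets $\Gamma_0=\Gamma\cap\prod\mathbf{H}_i(\calO_i[S_i])$, which has finite index in $\Gamma$ by hypothesis~(2), lets $D_0$ be the closure of its image (finite index in $D$), notes that $\prod (\mathbf{H}_i)_{S_i^\fin}^+$ has no proper closed finite-index subgroup (by \cite[Corollary~I.1.5.7]{Margulis:book} applied factorwise plus density) so that $\prod (\mathbf{H}_i)_{S_i^\fin}^+<D_0$ as well, and then defines $A$ as the common image of $\Gamma_0$ and $D_0$ in the discrete quotient $\prod (\mathbf{H}_i)_{S_i^\fin}^\#/\prod (\mathbf{H}_i)_{S_i^\fin}^+$, whence $\Gamma_0=\Gamma_A$ on the nose. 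No separate lattice or covolume verification for $\Gamma$ is needed, since Proposition~\ref{ex:lattices} already certifies $\Gamma_A$.
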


\begin{proof}
We first claim that $\prod (\mathbf{H}_i)_{S^\fin_i}^+<D$.
Denote by $\tilde{D}$ the preimage of $D$ in $\prod (\tilde{\mathbf{H}}_i)_{S^\fin_i}$.
Then $\tilde{D}$ is open since it contains a finite index subgroup of the image of $\prod \tilde{\mathbf{H}}_i(\calO_i)$
which is dense in an open subgroup by strong approximation~\cite{Platonov+Rapinchuk}*{Theorem~7.12}.
For each $i$ and $\nu\in S_i^\fin$, $\tilde{D}\cap \tilde{\mathbf{H}}_i(K_\nu)$ is open in $\tilde{\mathbf{H}}_i(K_\nu)$ and normal in $\tilde{D}$. Hence it is also normal in $\tilde{\mathbf{H}}_i(K_\nu)$ as $\tilde{D}\to (\tilde{\mathbf{H}}_i)_{S^\fin_i} \to \tilde{\mathbf{H}}_i(K_\nu)$
is surjective by the assumption that the projection $D\to  (\mathbf{H}_i)_{S^\fin_i}$ contains $(\mathbf{H}_i)_{S^\fin_i}^+$.
By \cite[Theorem~I.1.5.6(i)]{Margulis:book} we get that $\tilde{\mathbf{H}}_i(K_\nu)<\tilde{D}$. 
The group generated by these is dense. Hence $\tilde{D}=\prod (\tilde{\mathbf{H}}_i)_{S^\fin_i}$. 
By Lemma~\ref{lemma:+} we conclude that $\prod (\mathbf{H}_i)_{S^\fin_i}^+<D$.

Denote $\Gamma_0=\Gamma\cap \prod \mathbf{H}_i(\calO_i[S_i])$
and let $D_0$ be the closure of the projection of $\Gamma_0$ to $\prod (\mathbf{H}_i)_{S_i^\fin}$.
Then $\Gamma_0<\Gamma$ and $D_0<D$ are of finite index.

We claim that the group $\prod (\mathbf{H}_i)_{S^\fin_i}^+$ has no proper closed finite index subgroup.
Indeed, for every $i$ and $\nu\in S^\fin_i$, by \cite[Corollary~I.1.5.7]{Margulis:book}, the groups $\mathbf{H}_i(K_\nu)^+$ has no proper finite index subgroup, and the group generated by these is dense in $\prod (\mathbf{H}_i)_{S^\fin_i}^+$.
We conclude that $\prod (\mathbf{H}_i)_{S_i}^+<D_0$.

By Lemma~\ref{lem:acl} $\prod (\mathbf{H}_i)_{S_i^\fin}^\#/\prod (\mathbf{H}_i)_{S_i^\fin}^+$ is discrete.
Since the image of $\Gamma_0$ in $D_0$ is dense, they have the same image in $\prod (\mathbf{H}_i)_{S_i^\fin}^\#/\prod (\mathbf{H}_i)_{S_i^\fin}^+$,
which we denote $A$. With the notation of Proposition~\ref{ex:lattices}, clearly $D_0=D_A$ and $\Gamma_0=\Gamma_A$. So $\Gamma$ is arithmetically constructed. 
\end{proof}

\section{Representations of commensurable pairs}
\label{sub:reps-of-comm}
Recall that two subgroups of an (abstract) group are \emph{commensurable} if 
their intersection has finite index in both subgroups. 
A subgroup $H$ of a group $\Lambda$ is \emph{commensurated} if 
$H$ and any conjugate $\lambda H \lambda^{-1}$ are commensurable. 
Commensurability is an equivalence relation. 
An \emph{invariant commensurability class} in a group is a commensurability equivalence class that is closed under conjugation. 

\begin{example}\label{E:clompact-in-tdlc}
	Let $G$ be a topological group. 
	Then the class $\mathcal{K}_G$ of compact open subgroups of $G$ is an invariant commensurability class.
\end{example}
A pair $(\Lambda, C)$ consisting of an abstract group $\Lambda$ and 
an invariant commensurability class $C$ in $\Lambda$ is called a 
\emph{commensurability pair}. 
If $(\Lambda, C)$ is an invariant commensurability pair then every $\Delta\in C$ is a commensurated subgroup in $\Lambda$;
conversely, the commensurability class $C$ of any commensurated subgroup $\Delta<\Lambda$ defines an invariant commensurability pair $(\Lambda,C)$.
Note also that under a group homomorphism, the preimage of a commensurated  subgroup is commensurated. 

\begin{definition}\label{def: tdlc representation}
	A {\em tdlc representation} of a commensurability pair $(\Lambda,C)$ is a tdlc group $G$ together with a homomorphism
	$\phi:\Lambda\to G$ such that $\phi^{-1}(K)\in C$ for some (hence any) $K\in\mathcal{K}_G$. 
	If, in addition, $\im(\phi)$ is dense in $G$ 
	we call it a \emph{dense tdlc representation}. 

	A \emph{morphism} of tdlc representations $\phi\colon\Lambda\to G$ and $\phi'\colon\Lambda\to G'$ 
	is a continuous homomorphism $\psi:G\to G'$ such that $\psi\circ \phi=\phi'$.
\end{definition}



Let $\phi:\Lambda\to G$ be a tdlc representation of a commensurability pair $(\Lambda, C)$. 
For $\Delta\in C$, the closure of $\phi(\Delta)$ is compact in $G$, and it is also open, thus $\overline{\phi(\Delta)}\in\mathcal{K}_G$, 
provided $\phi$ is dense. 
Further, for any open group $H<G$, $H=\overline{\phi(\phi^{-1}(H))}$. We summarise: 

\begin{lemma}
For a dense tdlc representation $\phi\colon \Lambda\to G$ of a commensurability pair $(\Lambda, C)$, $C$ is the commensurability class of any preimage $\phi^{-1}(K)$ with $K\in\mathcal{K}_G$ and $\{\overline{\phi(\Delta)}\mid \Delta\in C\}=\mathcal{K}_G$. 
\end{lemma}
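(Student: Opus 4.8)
The plan is to prove the two assertions separately, each by unwinding Definition~\ref{def: tdlc representation} together with two elementary topological facts that were recorded just before the statement: that $H=\overline{\phi(\phi^{-1}(H))}$ for every open subgroup $H<G$, and that $\overline{\phi(\Delta)}$ is a compact open subgroup when $\Delta\in C$ and $\phi$ is dense. For the first assertion, note that by Definition~\ref{def: tdlc representation} the subgroup $\phi^{-1}(K)$ lies in $C$ for every $K\in\mathcal{K}_G$, and since $C$ is a single commensurability class it therefore \emph{is} the commensurability class of $\phi^{-1}(K)$. The only thing needing an argument is the independence of this class on the choice of $K$: given $K,K'\in\mathcal{K}_G$, the intersection $K\cap K'$ is again compact open and of finite index in both (Example~\ref{E:clompact-in-tdlc}), and applying $\phi^{-1}$ turns a finite-index inclusion into a finite-index inclusion, so $\phi^{-1}(K)$ and $\phi^{-1}(K')$ are commensurable.

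For the inclusion $\mathcal{K}_G\subseteq\{\overline{\phi(\Delta)}\mid\Delta\in C\}$ I would take, for $K\in\mathcal{K}_G$, the subgroup $\Delta=\phi^{-1}(K)\in C$; then $\phi(\phi^{-1}(K))=K\cap\im(\phi)$ is dense in $K$ because $K$ is open and $\im(\phi)$ is dense in $G$, so $\overline{\phi(\Delta)}=K$, and $K$ is exhibited on the left.

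For the reverse inclusion, fix $\Delta\in C$ and choose $K\in\mathcal{K}_G$. By the first assertion $\Delta$ is commensurable with $\phi^{-1}(K)$, so $\Delta_0:=\Delta\cap\phi^{-1}(K)$ has finite index in $\phi^{-1}(K)$; applying $\phi$, the group $\phi(\Delta_0)$ has finite index in $\phi(\phi^{-1}(K))=K\cap\im(\phi)$, which as above is dense in $K$. Writing $K\cap\im(\phi)$ as a finite union of cosets of $\phi(\Delta_0)$ and passing to closures (a finite union of closed cosets is closed) shows $\overline{\phi(\Delta_0)}$ is a closed finite-index subgroup of $\overline{K\cap\im(\phi)}=K$, hence open in the compact group $K$, i.e. compact open. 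Since $\Delta_0$ has finite index in $\Delta$, the same coset argument gives that $\overline{\phi(\Delta)}$ is a finite union of cosets of $\overline{\phi(\Delta_0)}$, hence compact and open; being the closure of the subgroup $\phi(\Delta)$ it is a subgroup, so $\overline{\phi(\Delta)}\in\mathcal{K}_G$.

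I do not expect a serious obstacle here — the statement essentially packages the discussion preceding it. The two points that require a little care are the density of $K\cap\im(\phi)$ in $K$ (which genuinely uses the openness of $K$, not just density of $\im(\phi)$) and the interchange of ``closure'' with ``finite-index subgroup'', for which one uses that a finite union of closed cosets is closed and that a closed finite-index subgroup of a compact group is open. Both are standard, so the proof should be short.
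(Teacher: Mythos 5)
Your proof is correct and takes essentially the same approach as the paper, which gives no formal proof at all: it simply records the two observations that $\overline{\phi(\Delta)}\in\mathcal{K}_G$ for $\Delta\in C$ and that $H=\overline{\phi(\phi^{-1}(H))}$ for any open $H<G$, and then ``summarises'' them as the lemma. You have merely supplied the routine details (density of $K\cap\im(\phi)$ in the open subgroup $K$, and the compatibility of finite index with images, preimages and closures), all of which are sound.
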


Because of this lemma we may and will drop $C$ from the notation when dealing with dense tdlc representations.

%


\begin{lemma} \label{lem:closedimage}
	Let $\psi:G\to G'$ be a morphism of dense tdlc representations from $\phi:\Lambda\to G$ to $\phi':\Lambda\to G'$.
	Then $\psi$ is surjective with compact kernel.
\end{lemma}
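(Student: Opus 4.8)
The plan is to verify the two assertions separately, using the density hypothesis and the commensuration bookkeeping packaged in the preceding lemmas. First I would establish surjectivity. Since $\psi\circ\phi=\phi'$ and $\phi'(\Lambda)$ is dense in $G'$, the image $\psi(G)$ is dense in $G'$; so it suffices to show $\psi(G)$ is also closed, equivalently (since $G'$ is tdlc) that $\psi(G)$ contains a compact open subgroup of $G'$. Pick $K'\in\mathcal{K}_{G'}$ and set $\Delta=\phi'^{-1}(K')$; by the definition of a tdlc representation $\Delta\in C$, and by the lemma recorded just before Lemma~\ref{lem:closedimage}, $\overline{\phi(\Delta)}\in\mathcal{K}_G$. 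Now $\psi(\overline{\phi(\Delta)})$ is a compact subgroup of $G'$ containing $\phi'(\Delta)$, hence contained in no larger compact subgroup than... more carefully: $\psi(\overline{\phi(\Delta)})\subset\overline{\phi'(\Delta)}\subset\overline{K'}=K'$, while on the other hand $\psi(\overline{\phi(\Delta)})\supset\psi(\phi(\Delta))=\phi'(\Delta)$, whose closure is $K'$ because $K'$ is open and $\phi'(\Lambda)$ is dense. Since $\psi(\overline{\phi(\Delta)})$ is compact, hence closed, it equals $K'$. Thus $K'\subset\psi(G)$, so $\psi(G)$ is an open, dense, hence all of $G'$: surjectivity holds.

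Next I would treat the kernel $N=\ker\psi$. The natural candidate is that $N$ is contained in $\overline{\phi(\Delta)}$ for any $\Delta\in C$, which is compact, so $N$ — being closed — is compact. To see $N\subset\overline{\phi(\Delta)}$, note that $\overline{\phi(\Delta)}$ is open in $G$ (it lies in $\mathcal{K}_G$), so $G/\overline{\phi(\Delta)}$ is discrete as a $G$-set; the image of $\phi(\Lambda)$ is dense in $G$, so $\Lambda$ acts with a single orbit on the discrete coset space, i.e. $G=\phi(\Lambda)\cdot\overline{\phi(\Delta)}$. Hence any $g\in N$ can be written $g=\phi(\lambda)h$ with $h\in\overline{\phi(\Delta)}$; applying $\psi$ gives $1=\phi'(\lambda)\psi(h)$, and since $\psi(h)\in\psi(\overline{\phi(\Delta)})=K'$ (from the previous paragraph, with the appropriate $\Delta$), we get $\phi'(\lambda)\in K'$, i.e. $\lambda\in\phi'^{-1}(K')=\Delta$. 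Therefore $\phi(\lambda)\in\overline{\phi(\Delta)}$ and so $g\in\overline{\phi(\Delta)}$. Thus $N\subset\overline{\phi(\Delta)}$, and being a closed subgroup of a compact group it is compact.

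The main obstacle I anticipate is the surjectivity step — specifically making rigorous that $\psi(G)$ is closed. The density of $\psi(G)$ is immediate, but closedness requires producing an honest open subgroup of $G'$ inside the image, and the clean way to do that is exactly the identification $\psi(\overline{\phi(\Delta)})=K'$ for a well-chosen compact open $K'$; the two inclusions needed there (one from $\overline{\phi(\Delta)}$ mapping into $\overline{\phi'(\Delta)}\subset K'$, the other from density of $\phi'(\Lambda)$ in the open group $K'$) both hinge on continuity of $\psi$ and on openness being preserved under the closure operation $\Delta\mapsto\overline{\phi(\Delta)}$, which is where the standing hypotheses on $\phi$ and $\phi'$ being \emph{dense} tdlc representations are essential. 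Once that identity is in hand, surjectivity and the location of the kernel both fall out of the same coset-counting argument, so I would set up that identity first and reuse it twice.
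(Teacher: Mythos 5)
Your argument is correct and takes essentially the same route as the paper: both proofs fix $K'\in\mathcal{K}_{G'}$, set $\Delta=\phi'^{-1}(K')$, establish $\psi\bigl(\overline{\phi(\Delta)}\bigr)=K'$ by combining continuity/compactness with density of $\phi'(\Lambda)$ in the open subgroup $K'$, and deduce surjectivity from $G'=K'\phi'(\Lambda)$. The only (immaterial) difference is in locating the kernel: the paper computes $\psi^{-1}(K')=\overline{\phi(\Delta)}$ directly via the identity $H=\overline{\phi(\phi^{-1}(H))}$ for open $H<G$, whereas you derive $\ker\psi\subset\overline{\phi(\Delta)}$ from the coset decomposition $G=\phi(\Lambda)\cdot\overline{\phi(\Delta)}$.
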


\begin{proof}
Let $K'\in\mathcal{K}_{G'}$. Let 
$\Delta=\phi'^{-1}(K')$ and $K=\overline{\phi(\Delta)}$.
By the previous lemma $K\in\mathcal{K}_G$. We have 
\[ 
	\psi(K)=\overline{\psi(K)}=\overline{\psi(\phi(\Delta))}=\overline{\phi'(\Delta)}=K' 
\]
and
\[ 
	\psi^{-1}(K')=\overline{\phi(\phi^{-1}( \psi^{-1}(K')))}=\overline{\phi(\phi'^{-1}(K'))}=\overline{\phi(\Delta)}=K. 
\]
In particular, $\Ker(\psi)<K$ and
\[
	G'=K'\phi'(\Lambda)=\psi(K)\psi(\phi(\Lambda))=\psi(K\phi(\Lambda))=\psi(G). \qedhere
\]
\end{proof}

The following theorem is well known (however we believe our easy proof below is novel).
It will not be used in these notes, it is only given for context.

\begin{theorem}
	The category of tdlc representations of the pair $(\Lambda,C)$ has an initial object, which is a dense representation.
\end{theorem}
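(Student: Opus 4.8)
The plan is to construct the initial object directly as a suitable completion of $\Lambda$ with respect to the family of finite-index subgroups of a fixed $\Delta\in C$, and then verify the universal property using Lemma~\ref{lem:closedimage}.

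First I would fix some $\Delta \in C$ and consider the collection $\mathcal{N}$ of all subgroups of $\Lambda$ of the form $\bigcap_{i=1}^k \lambda_i \Delta' \lambda_i^{-1}$, where $\Delta' < \Delta$ has finite index and $\lambda_i \in \Lambda$; equivalently, one may take all finite-index subgroups of finite intersections of conjugates of $\Delta$. Since $\Delta$ is commensurated, every conjugate $\lambda \Delta \lambda^{-1}$ is commensurable to $\Delta$, so each member of $\mathcal{N}$ has finite index in $\Delta$ (hence lies in $C$), and $\mathcal{N}$ is closed under finite intersection and under conjugation by elements of $\Lambda$. I would then form the completion
\[
	G_0 := \varprojlim_{N \in \mathcal{N}} \Lambda / \operatorname{core}(N),
\]
where $\operatorname{core}(N)=\bigcap_{\lambda\in\Lambda}\lambda N\lambda^{-1}$ is the normal core — or, more cleanly, topologize $\Lambda$ by declaring $\mathcal{N}$ a neighbourhood basis of the identity (this is a group topology precisely because $\mathcal{N}$ is conjugation-invariant and filtered under intersection), and let $G_0$ be the Hausdorff completion with $\phi_0 \colon \Lambda \to G_0$ the canonical map. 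Then $G_0$ is tdlc: it is totally disconnected and Hausdorff by construction, and it is locally compact because the closure $\overline{\phi_0(\Delta)}$ is compact — it is an inverse limit of the finite sets $\Delta/N$ over $N \in \mathcal{N}$ — and open, since $\Delta \supset N$ for every $N\in\mathcal N$. Moreover $\phi_0^{-1}(\overline{\phi_0(\Delta)}) $ is commensurable to $\Delta$, so $\phi_0$ is a dense tdlc representation of $(\Lambda, C)$.

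For the universal property, let $\phi \colon \Lambda \to G$ be any tdlc representation. Pick $K \in \mathcal{K}_G$; then $\phi^{-1}(K) \in C$, so it is commensurable to $\Delta$, and I claim the preimages $\phi^{-1}(K')$ as $K'$ ranges over compact open subgroups of $G$ are cofinal in (in fact, up to finite index, the same as) a subfamily of $\mathcal{N}$ — more precisely, each such $\phi^{-1}(K')$ contains a member of $\mathcal{N}$, because $\phi^{-1}(K')\cap\Delta$ has finite index in $\Delta$ and intersecting finitely many conjugates handles the general compact open subgroup once one notes $K'$ is commensurable to $K$. Hence $\phi$ is continuous for the topology on $\Lambda$ defined by $\mathcal{N}$, so it extends uniquely to a continuous homomorphism $\psi \colon G_0 \to G$ with $\psi \circ \phi_0 = \phi$. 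This gives a morphism of tdlc representations in the sense of Definition~\ref{def: tdlc representation}, and uniqueness of $\psi$ follows from density of $\phi_0(\Lambda)$ in $G_0$. Thus $\phi_0$ is initial, and it is dense by construction; if $\phi$ is moreover dense, then $\psi$ is automatically surjective with compact kernel by Lemma~\ref{lem:closedimage}, but that is not needed for initiality.

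The main obstacle I expect is the bookkeeping around commensurated (rather than normal) subgroups: one has to check carefully that the family $\mathcal{N}$ built from finite-index subgroups of finite intersections of conjugates of $\Delta$ is genuinely a filtered, conjugation-invariant neighbourhood basis — conjugation-invariance is where commensuration is used — and, on the other side, that an arbitrary tdlc representation $\phi \colon \Lambda \to G$ is continuous for this topology, i.e. that every compact open $K' < G$ pulls back to a subgroup of $\Lambda$ containing some $N \in \mathcal{N}$. Both points are elementary once set up correctly, but they are the substance of the argument; the construction of $G_0$ and the extension of $\phi$ are then formal.
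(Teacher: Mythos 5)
Your construction is correct in substance, but it takes a genuinely different route from the paper. The paper builds the initial object ``from above'': it observes that the isomorphism classes of dense tdlc representations form a set, takes the restricted product of all of them (with distinguished subgroups the closures of the images of a fixed $\Delta\in C$), and then takes the closure of the diagonal image of $\Lambda$; the required morphism to an arbitrary representation is simply a coordinate projection followed by the inclusion $\overline{\phi(\Lambda)}\hookrightarrow G$. You build the object ``from below'', as the Raikov (Hausdorff) completion of $\Lambda$ in the group topology for which the members of $C$ --- equivalently your cofinal family $\mathcal N$ --- form a neighbourhood basis of the identity; this is a Schlichting-type relative profinite completion. Your version buys an explicit model (the kernel of $\phi_0$ is $\bigcap_{N\in C}N$, and $\overline{\phi_0(\Delta)}$ is a profinite completion of $\Delta$) and avoids the paper's set-theoretic aside, at the cost of having to check that $\mathcal N$ generates a group topology, that the completion is tdlc, and that every tdlc representation is continuous for this topology. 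All of these do go through: conjugation-invariance of the filter base is exactly where commensuration enters; continuity of an arbitrary $\phi\colon\Lambda\to G$ holds because $\phi^{-1}(K')\cap\Delta$ is a finite-index subgroup of $\Delta$ and hence lies in $\mathcal N$; and the extension to the completion factors through the Hausdorff quotient because $\bigcap\mathcal K_G=\{e\}$ by van Dantzig. Uniqueness of the extension follows from density of $\phi_0(\Lambda)$, just as in the paper.

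One formulation you should drop is $G_0=\varprojlim_{N}\Lambda/\operatorname{core}(N)$ with the normal core taken over all of $\Lambda$: that core can have infinite index in $N$ (it can even be trivial), so the quotients $\Lambda/\operatorname{core}(N)$, taken discrete, yield an inverse limit in which the closure of the image of $\Delta$ need not be compact. Stick with the completion formulation; there compactness of $\overline{\phi_0(\Delta)}$ holds because the induced topology on $\Delta$ has a basis of finite-index subgroups whose normal cores \emph{taken inside $\Delta$} are again finite-index members of $\mathcal N$, so $\overline{\phi_0(\Delta)}$ is an inverse limit of finite groups.
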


\begin{proof}
The collection of isomorphism classes of dense tdlc representations of $(\Lambda, C)$ forms a set $\Sigma$.
Fix some $\Delta\in C$. For each dense representation $\phi\colon \Lambda\to G$ in $\Sigma$ we pick the closure of the image of $\Delta$ in $G$ as a distinguished subgroup of $G$. 
Then we take the restricted product inside $\prod_{(\phi\colon\Lambda\to G)\in\Sigma} G$ with respect to these distinguished subgroups. Finally, we 
take the closure $\hat{G}$ of the image of $\Lambda$ under the diagonal homomorphism. We claim that the resulting representation $\hat{\phi}:\Lambda\to \hat{G}$ is an initial object in the category of tdlc representations. 

Let $\phi\colon\Lambda\to G$ be a tdlc representation. By construction, $\hat{G}$ admits a projection onto an isomorphic copy of $\overline{\phi(\Lambda)}$.
Composing this projection with the inverse of the alluded isomorphism and the inclusion of $\overline{\phi(\Lambda)}$ in $G$ we get $\psi\colon\hat{G}\to G$,
which is easily seen to be a morphism of representations.
The uniqueness of this morphism is clear by the density of $\hat{\phi}$.
\end{proof}

In the next theorem we consider the category of dense tdlc representations, which is a full subcategory of the category of tdlc representations.

\begin{theorem} \label{thm:terminal}
	Let $\check{\phi}\colon\Lambda\to \check{G}$ be a dense tdlc representation such that $\check{G}$ has no non-trivial compact normal subgroup.
	Then $\check{\phi}$ is a terminal object in the category of dense tdlc representations.
\end{theorem}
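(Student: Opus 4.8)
The claim is that for any dense tdlc representation $\phi\colon\Lambda\to G$ there is a unique morphism of dense tdlc representations $G\to\check G$. Uniqueness is immediate from density of $\im(\phi)$ in $G$, since any two continuous homomorphisms agreeing on $\phi(\Lambda)$ agree everywhere. So the content is existence of a continuous homomorphism $\psi\colon G\to\check G$ with $\psi\circ\phi=\check\phi$.

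First I would produce a candidate $\psi$ by a fibre-product construction. Form $L\defq\overline{(\phi,\check\phi)(\Lambda)}<G\times\check G$, the closure of the diagonal image of $\Lambda$, and let $\alpha\colon L\to G$ and $\beta\colon L\to\check G$ be the restrictions of the two coordinate projections. Both are dense tdlc representations of $(\Lambda,C)$: indeed $L$ is tdlc (closed in a product of tdlc groups), the images are dense by construction, and for $K\in\mathcal K_G$ one checks $\alpha^{-1}(K)$ (and similarly $\beta^{-1}(K')$) lies in $C$ — this is where one uses that $\alpha$ is a morphism of representations in the sense of Lemma~\ref{lem:closedimage}, so $\alpha$ is surjective with compact kernel; hence $\alpha^{-1}(K)$ is compact open in $L$, and pulling back along the dense homomorphism $\Lambda\to L$ shows $(\text{diagonal})^{-1}(\alpha^{-1}(K))=\phi^{-1}(K)\in C$. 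Then $\alpha$ is a morphism of dense tdlc representations, so by Lemma~\ref{lem:closedimage} it is surjective with compact kernel $N\defq\ker\alpha$. The natural candidate is $\psi\defq\beta\circ\alpha^{-1}$, which makes sense as a homomorphism precisely when $N\subset\ker\beta$; and if that holds, $\psi$ is continuous because $\alpha$ is a continuous open surjection (open mapping theorem for tdlc groups, or directly since $\alpha$ has compact kernel), and $\psi\circ\phi=\beta\circ(\text{diagonal})=\check\phi$ by construction.

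The heart of the matter, then, is to show $N\subset\ker\beta$, equivalently that $\beta(N)$ is trivial. The subgroup $N=\ker\alpha$ is a compact normal subgroup of $L$; its image $\beta(N)$ is therefore a compact normal subgroup of $\beta(L)=\check G$ (using that $\beta$ is surjective, which again follows from Lemma~\ref{lem:closedimage} since $\beta$ is a morphism of dense tdlc representations). But $\check G$ has no non-trivial compact normal subgroup by hypothesis, so $\beta(N)=\{1\}$, i.e.\ $N\subset\ker\beta$. This is the one place the no-compact-normal-subgroup assumption on $\check G$ is used, and it is exactly what is needed to descend $\beta$ through $\alpha$.

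I expect the main obstacle to be purely bookkeeping rather than conceptual: verifying carefully that $L$, $\alpha$, $\beta$ genuinely are \emph{dense tdlc representations of the pair $(\Lambda,C)$} (not merely tdlc groups with dense maps), so that Lemma~\ref{lem:closedimage} legitimately applies to both projections and delivers surjectivity and compact kernels on each side. Once that is in place, the rest — compactness and normality of $N=\ker\alpha$, compactness and normality of $\beta(N)$, invoking the hypothesis, defining $\psi=\beta\circ\alpha^{-1}$ and checking continuity and $\psi\circ\phi=\check\phi$ — is routine. Finally uniqueness follows from density of $\im(\phi)$ in $G$ as noted above, completing the proof that $\check\phi$ is terminal.
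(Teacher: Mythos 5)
Your proposal is correct and follows essentially the same route as the paper: form the closure $L$ of the graph of $(\phi,\check\phi)$ in $G\times\check G$, apply Lemma~\ref{lem:closedimage} to the first projection to get surjectivity with compact kernel, kill the kernel using that it is a compact subgroup normal under (the dense image of, hence all of) $\check G$, and define the desired morphism as $\pr_2\circ\pr_1^{-1}$. The only cosmetic difference is that the paper concludes $\ker(\pr_1|_L)$ is trivial directly (so $\pr_1|_L$ is a topological isomorphism by the open mapping theorem), whereas you phrase it as $\ker\alpha\subset\ker\beta$; since $\ker\alpha\subset\{e\}\times\check G$, these are the same statement.
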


\begin{proof}
Let $\phi\colon\Lambda\to G$ be a dense tdlc representation.
We only need to show the existence of a morphism $G\to \check{G}$ since there is at most one morphism between dense representations. 
We denote $G':=\overline{(\phi\times\check{\phi})(\Lambda)}<G\times \check{G}$ and let $\phi'$ denote the diagonal representation,
with the codomain restricted to $G'$.
Let us denote $\psi=\pr_1|_{G'}$.
By Lemma~\ref{lem:closedimage} $\psi$ is surjective with compact kernel.
Since $\Ker(\psi) \lhd \{e\}\times \check{G} \cong \check{G}$ we conclude that $\psi$ is injective.
By the open mapping theorem~\cite{bourbaki}*{IX. \S 5.3} we conclude that $\psi$ is a topological group isomorphism.
It is now easy to see that $\pr_2\circ\psi^{-1}\colon G\to \check{G}$ is a morphism of tdlc representations.
\end{proof}

The following lemma provides an example of a terminal representation appearing in an arithmetic group setting.
It will be useful later.

\begin{lemma} \label{lem:terminal}
We retain the setting and notation described at the start of 
Section~\ref{sec: arithmetic lattices}. In addition we assume that 
\begin{enumerate}
	\item each $\bfH_i$ is adjoint, 
	\item $\Sigma$ contains a finite index subgroup of $\prod \mathbf{H}_i(\calO_i)$, 
	\item a finite index subgroup of $\Sigma$ is contained in $\prod \mathbf{H}_i(\calO_i[S_i])$, 
	\item the closure of the projection $\Sigma\to (\mathbf{H}_i)_{S_i^\fin}$ has no non-trivial compact normal subgroup for each $i\in\{1,\dots,n\}$.
\end{enumerate}
Let $Q<\prod (\mathbf{H}_i)_{S_i^\fin}$ be the closure of the image of $\Sigma$ under the diagonal map, and let $C$ be the commensurability class of $\Sigma\cap \prod \mathbf{H}_i(\calO_i)$ in $\Sigma$. 

Then the embedding $\Sigma\to Q$ is a dense tdlc representation of the commensurability pair $(\Sigma,C)$ which is is a terminal object in the category of dense tdlc representations of $(\Sigma, C)$.
\end{lemma}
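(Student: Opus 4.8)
The strategy is to verify that $\Sigma\to Q$ is a dense tdlc representation of $(\Sigma,C)$ and then to apply Theorem~\ref{thm:terminal}: it suffices to show $Q$ has no non-trivial compact normal subgroup. The density is built into the definition of $Q$. That $Q$ is tdlc follows because $\prod(\mathbf{H}_i)_{S_i^\fin}$ is a tdlc group (each $\mathbf{H}_i(K_\nu)$ is tdlc for $\nu$ finite, and the restricted product construction with the compact open pieces $\mathbf{H}_i(\calO_\nu)$ preserves this) and $Q$ is a closed subgroup.

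\textbf{The commensurability-pair structure.} First I would check that the embedding $\Sigma\to Q$ is genuinely a tdlc representation of $(\Sigma,C)$, i.e.\ that the preimage of a compact open subgroup of $Q$ lies in $C$. Applying Theorem~\ref{thm:algsetting} to each $\mathbf{H}_i$ (which is legitimate: assumption (2) gives that $\Sigma$ contains a finite index subgroup of $\prod\mathbf{H}_i(\calO_i)$, so projecting to the $i$-th factor and using that $\mathbf{H}_{i,\calV_i^\infty}$ is noncompact because $S_i$ is compatible hence contains an isotropic infinite place), one sees that the set of places at which the $i$-th projection of $\Sigma$ is unbounded is contained in $S_i$; combined with assumption (3) this yields that $\Sigma\cap\prod\mathbf{H}_i(\calO_i)$ is commensurated in $\Sigma$ and that its closure in $\prod(\mathbf{H}_i)_{S_i^\fin}$ is open and compact — indeed it is commensurable with $\prod_i\prod_{\nu\in S_i^\fin}\mathbf{H}_i(\calO_\nu)\cap Q$, a compact open subgroup of $Q$. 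Since under any homomorphism the preimage of a commensurated subgroup is commensurated, and the preimage of such a compact open subgroup of $Q$ is commensurable with $\Sigma\cap\prod\mathbf{H}_i(\calO_i)$, this is exactly the class $C$. Hence $\Sigma\to Q$ is a dense tdlc representation of $(\Sigma,C)$.

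\textbf{No non-trivial compact normal subgroup.} This is the crux. Let $N\lhd Q$ be a non-trivial compact normal subgroup. Projecting to the $i$-th factor $\pr_i\colon Q\to (\mathbf{H}_i)_{S_i^\fin}$, the image $\overline{\pr_i(N)}$ is a compact subgroup normalized by $\overline{\pr_i(Q)}=\overline{\pr_i(\Sigma)}$, hence — since $\pr_i(N)$ is also normal in $\pr_i(Q)$ whose closure contains the closure of the projection of $\Sigma$, and the latter is exactly the closure of the projection of $\Sigma$ to $(\mathbf{H}_i)_{S_i^\fin}$ — it is a compact normal subgroup of $\overline{\pr_i(\Sigma)}$. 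By assumption (4), $\pr_i(N)$ is trivial for every $i$, so $N$ is trivial. (One should be a little careful that $\overline{\pr_i(N)}$ is normal in $\overline{\pr_i(\Sigma)}$ rather than merely in $\pr_i(Q)$; but $\pr_i(Q)$ is dense in $\overline{\pr_i(\Sigma)}$ since $Q$ is the closure of the diagonal image of $\Sigma$ and $\pr_i$ is continuous, and normality passes to closures.) With $Q$ tdlc, dense representation, and no non-trivial compact normal subgroup in hand, Theorem~\ref{thm:terminal} applies verbatim and $\Sigma\to Q$ is terminal in the category of dense tdlc representations of $(\Sigma,C)$.

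\textbf{Main obstacle.} The routine but slightly delicate point is the bookkeeping that identifies the class $C$ with the preimage class of a compact open subgroup of $Q$ — one must marshal assumptions (2) and (3) together with Theorem~\ref{thm:algsetting}(2),(3) to control the places of unboundedness and to see that $\Sigma\cap\prod\mathbf{H}_i(\calO_i)$, the various $\mathbf{H}(\calO[S])$-truncations, and $\prod_i\prod_{\nu\in S_i^\fin}\mathbf{H}_i(\calO_\nu)\cap Q$ are all mutually commensurable in the appropriate ambient groups. The conceptual heart, ruling out compact normal subgroups, is immediate from assumption (4) once the factor-wise projection argument is set up.
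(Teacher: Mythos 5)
Your proposal is correct and follows essentially the same route as the paper: verify that the preimage of the standard compact open subgroup $\prod_i\prod_{\nu\in S_i^\fin}\mathbf{H}_i(\calO_\nu)$ in $\Sigma$ lies in $C$ (the paper does this via the identity $\prod \mathbf{H}_i(\calO_i[T_i]) \cap \prod \mathbf{H}_i(\calO_i[S_i])=\prod \mathbf{H}_i(\calO_i)$ with $T_i=\calV_i^\fin-S_i$, which is the explicit form of your bookkeeping), and then apply Theorem~\ref{thm:terminal} once $Q$ is known to have no non-trivial compact normal subgroup. Your factor-wise projection argument for the latter correctly fills in a step the paper dispatches with the phrase ``due to assumption (4).''
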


\begin{proof}
By Theorem~\ref{thm:appcomm} $\prod \mathbf{H}_i(\calO_i)$ is commensurated in $\prod \mathbf{H}_i(K_i)$. Thus the group 
$\Sigma\cap \prod \mathbf{H}_i(\calO_i)$ is commensurated in $\Sigma$ and $C$ is invariant. For each $i$ let $T_i=\calV_i^\fin-S_i$. 
Observe that 
\[ 
	\prod \mathbf{H}_i(\calO_i[T_i]) \cap \prod \mathbf{H}_i(\calO_i[S_i])=\prod \mathbf{H}_i(\calO_i). 
\]
Thus
\[ 
	\left(\Sigma \cap \prod \mathbf{H}_i(\calO_i[T_i])\right) 
	\cap \left(\Sigma \cap \prod \mathbf{H}_i(\calO_i[S_i])\right)=\Sigma\cap\prod \mathbf{H}_i(\calO_i). 
\]
Since $\Sigma \cap \prod \mathbf{H}_i(\calO_i[S_i])<\Sigma$ is of finite index, 
$\Sigma \cap \prod \mathbf{H}_i(\calO_i[T_i])$ is commensurable with $\Sigma\cap \prod \mathbf{H}_i(\calO_i)$, 
i.e $\Sigma \cap \prod \mathbf{H}_i(\calO_i[T_i])\in C$.

We claim that the embedding $\phi:\Sigma\to \prod (\mathbf{H}_i)_{S_i^\fin}$ is a tdlc representation: 
Consider the compact open subgroup
\[ 
	U=\prod_i \prod_{S_i} \mathbf{H}_i(\calO_i)_\nu)< \prod_i (\mathbf{H}_i)_{S_i^\fin} 
\]
The preimage of $U$ in $\prod \mathbf{H}_i(K_i))$ is exactly $\prod \mathbf{H}_i(\calO_i[T_i])$,
thus the preimage in $\Sigma$ is $\Sigma \cap \prod \mathbf{H}_i(\calO_i[T_i])$, which is in $C$, proving our claim.

Hence $\Sigma\to Q$ is a dense tdlc representation of the commensurability pair $(\Sigma,C)$.
By Theorem~\ref{thm:terminal} this is a terminal object if $Q$ has no non-trivial compact normal subgroup. The latter is due to assumption (4). 
\end{proof}


\section{Proof of Theorem ~\ref{T:arithmetic-core}}
	Let $U<D$ be an open compact subgroup. 
	We denote its preimage under $\Gamma\to D$ by $\Gamma|_U$.
	It is commensurated in $\Gamma$ by Lemma~\ref{L:a-normal}.
	By Lemma~\ref{L:open}
	$\Gamma|_U$ is a lattice in $H\times U$ and by Lemma~\ref{L:reducing} its projection to $H$ is a lattice in $H$.
	We denote $\Delta=\pr_H(\Gamma|_U)$ and $\Lambda=\pr_H(\Gamma)$.
	Thus $\Delta<\Lambda<H$, $\Delta$ is a lattice in $H$ which is commensurated by $\Lambda$ and $\Lambda$ is dense in $H$.

 	The lattice $\Delta<H$ might be reducible. The decomposition into irreducible components 
	gives a splitting $H=H_1\times \dots \times H_n$ 
	into $n$ factors and irreducible lattices $\Delta_i<H_i$, $i\in\{1,\ldots, n\}$ such 
	that $\Delta_1\times\cdots\times \Delta_n<\Delta$ is of finite index. 
	Let $\Lambda_i=\pr_{H_i}(\Gamma)$, where $\pr_{H_i}$ is the 
	projection $H\times D\to H\to H_i$. Clearly,  
	$\Lambda_i$ is dense in $H_i$, which implies $[\Lambda_i:\Delta_i]=\infty$, and 
	$\Delta_i$ is commensurated by $\Lambda_i$.

	We are in a position to apply Theorem~\ref{thm: margulis lemma} and obtain for each $i$:
	a number field $K_i$, a compatible set of places $S_i$, a connected, adjoint, almost simple $K_i$-algebraic group $\mathbf{H}_i$ 
	and a subgroup $\Sigma_i<\mathbf{H}_i(K_i)$,
	satisfying the following properties:
	\begin{itemize}
	\item
		There exists an isomorphism of topological groups
		$\phi_i:(\mathbf{H}_i)_{S_i^\infty}^0 \xrightarrow{\simeq} H_i$
		such that 
		$\phi_i^{-1}(\Lambda_i)$ equals the diagonal image of $\Sigma_i$ in $(\mathbf{H}_i)_{S_i^\infty}$ and
		the preimage of $\Delta_i$ in $\Sigma_i$ is commensurated with $\mathbf{H}_i(\calO_i)$.
	\item 
		$\Sigma_i$ contains a finite index subgroup of $\mathbf{H}_i(\calO_i)$ and a finite index subgroup of $\Sigma_i$ is contained in $\mathbf{H}_i(\calO_i[S_i])$.
	\item
		The closure of the projection of $\Sigma_i$ to $(\mathbf{H}_i)_{S_i^\fin}$ contains $(\mathbf{H}_i)_{S_i^\fin}^+$
		and it has no non-trivial compact normal subgroups.
	\end{itemize}

	The identification of $H$ with a subgroup of $\prod (\mathbf{H}_i)_{S_i^\infty}$ identifies $\Lambda$
	with a subgroup $\Sigma$ of the diagonal image of $\prod \mathbf{H}_i(K_i)$.
	We summarise some of the information we have so far in the following commutative diagram
	(arrows in this diagram are not named, but they are all explicit from our context):
	\begin{equation} \label{commdiag8}
	\begin{tikzcd}
		\Gamma|_U \arrow[d,"\cong"]\arrow[r,hook] & \Gamma \arrow[d,"\cong"]\arrow[r,hook] & H\times D\arrow[d, twoheadrightarrow]\arrow[r, twoheadrightarrow] &D\arrow[ddd, dotted, "\text{?}"]\\
		\Delta \arrow[d, leftrightsquigarrow, ,"\text{commen.}"] \arrow[r,hook] & \Lambda \arrow[d,"\cong"]\arrow[r,hook] & H \arrow[d,"\cong"]\\
		\Sigma\cap\prod \mathbf{H}_i(\calO_i) \arrow[d,"\text{f.i}",hook]\arrow[r,hook] & \Sigma \arrow[d,hook]\arrow[r] & \prod(\mathbf{H}_i)_{S_i^\infty}^0\\
		\prod \mathbf{H}_i(\calO) \arrow[r,hook] & \prod\mathbf{H}_i(K_i) \arrow[rr] & &\prod(\mathbf{H}_i)_{S_i^\fin}
	\end{tikzcd}
	\end{equation}
	where the twiddle vertical arrow between $\Delta$ and $\Sigma\cap\prod \mathbf{H}_i(\calO_i)$ indicates that the images of these two groups in $\Sigma$ are commensurated. 
	The dotted arrow from $D$ to $\prod(\mathbf{H}_i)_{S_i^\fin}$ is not yet given -- its existence is the heart of the theorem. This is what we are now after.

	By the above $\Sigma$ contains a finite index subgroup of $\prod \mathbf{H}_i(\calO_i)$ and a finite index subgroup of $\Sigma$ 
	is contained in $\prod \mathbf{H}_i(\calO_i[S_i])$.
	Also for each $i$,
	the closure of the projection $\Sigma\to (\mathbf{H}_i)_{S_i^\fin}$ has no non-trivial compact normal subgroup.
	Denote by $Q<\prod (\mathbf{H}_i)_{S_i^\fin}$ the closure of the image of $\Sigma$.
	Denote by $C$ the commensurability class of $\Sigma\cap \prod \mathbf{H}_i(\calO_i)$ in $\Sigma$.
	By Lemma~\ref{lem:terminal}
	the embedding $\Sigma\to Q$ is a dense tdlc representation of the commensurability pair $(\Sigma,C)$
	and it is a terminal object in the category of dense tdlc representations of this pair
	(see \S\ref{sub:reps-of-comm} for definitions).
	Hence the path from $\Gamma$ to $Q$ in the commutative diagram below is a terminal dense tdlc representation of the pair $(\Gamma,[\Gamma|_U])$. 
	\[ 
		\begin{tikzcd}
		\Gamma|_U  \arrow[d, leftrightsquigarrow, "\text{commen.}"]\arrow[r] & \Gamma \arrow[d,"\simeq"]\arrow[r] & D\arrow[d,dotted,"\text{?}"]\\
		\Sigma\cap\prod \mathbf{H}_i(\calO_i) \arrow[r] & \Sigma \arrow[r] & Q \arrow[r,hook] & \prod(\mathbf{H}_i)_{S_i^\fin}
		\end{tikzcd}
	\]
	By the property of terminal objects we can uniquely realise the dotted arrow in the above diagram. By Lemma~\ref{lem:closedimage} the dotted arrow is a continuous epimorphism with compact kernel~$C$. 
	The composition $D\to Q \to \prod(\mathbf{H}_i)_{S_i^\fin}$
	is thus a continuous homomorphism with closed image and compact kernel $C$ which realises the dotted arrow in Diagram~(\ref{commdiag8}).

	Note that 
	$\Sigma$ is a subgroup of $\prod \mathbf{H}_i(K_i)$
	and $Q$ is the closure of the projection of $\Sigma$ to $\prod (\mathbf{H}_i)_{S_i^\fin}$
	such that $\Sigma$ contains a finite index subgroup of $\prod \mathbf{H}_i(\calO_i)$, a finite index subgroup of $\Sigma$ 
	is contained in $\prod \mathbf{H}_i(\calO_i[S_i])$,
	and for each $i$, $(\mathbf{H}_i)_{S_i^\fin}^+$ is contained in the projection of $Q$ to $(\mathbf{H}_i)_{S_i^\fin}$.
	By Lemma~\ref{lem:subthmcore}
	we conclude that $\Sigma$ in is an arithmetically constructed lattice.
	The theorem is now proved by considering the commutative diagram: 
	\[
	\begin{tikzcd}
	\Gamma \arrow[d,"\cong"]\arrow[r] & H\times D/C\ar[d,"\cong"]\\
	\Sigma \arrow[r] &  \prod(\mathbf{H}_i)_{S_i^\infty}^0\times Q  \arrow[r,hook] & \prod(\mathbf{H}_i)_{S_i}
	\end{tikzcd}
	\]
	This completes the proof of Theorem~\ref{T:arithmetic-core}.

\begin{appendix}

\section{A theorem of Borel revisited}

In this appendix we generalise Borel's Theorem 3(b) from \cite{Borel-density}.
Our proof is almost identical to Borel's original proof and we have no doubt the sophisticated 
reader is well aware of this generalisation.
Nevertheless, since the result had not been published elsewhere in the form that we need, 
we decided to add the short proof for the reader's convenience.

\begin{theorem} \label{thm:comm}
Let $K$ be a field of characteristic 0.
Let $\mathbf{G}$ be a connected affine $K$-algebraic group and 
$\mathbf Z$ its center.
Let $H<\mathbf{G}(K)$ be a Zariski dense subgroup.
Let $A$ be a $K$-algebra and let $C$ be the commensurator of $H$ in $\mathbf{G}(A)$.
Then the image of $C$ in $\mathbf{G}/\mathbf Z(A)$ is contained in $\mathbf{G}/\mathbf Z(A)$.
\end{theorem}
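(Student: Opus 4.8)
The plan is to take an arbitrary $g\in C$ and prove that its image $\bar g$ in $(\mathbf{G}/\mathbf{Z})(A)$ lies in $(\mathbf{G}/\mathbf{Z})(K)$ (viewed inside $(\mathbf{G}/\mathbf{Z})(A)$ via the injection $K\hookrightarrow A$, which is harmless since $\mathbf{G}/\mathbf{Z}$ is affine). Since $g$ commensurates $H$, the subgroup $H':=H\cap g^{-1}Hg$ has finite index in $H$, and $gH'g^{-1}=gHg^{-1}\cap H\subseteq H\subseteq\mathbf{G}(K)$, so $ghg^{-1}\in\mathbf{G}(K)$ for every $h\in H'$. As $\mathbf{G}$ is connected and $H$ is Zariski dense, $H'$ is Zariski dense in $\mathbf{G}$ as well. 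A standard Noetherianity argument then produces finitely many $h_1,\dots,h_m\in H'$ whose generated subgroup is already Zariski dense in $\mathbf{G}$, so that the centralizer $\bigcap_{i=1}^m Z_{\mathbf{G}}(h_i)$ coincides with $Z_{\mathbf{G}}(\mathbf{G})=\mathbf{Z}$. This is the only place where characteristic $0$ is genuinely used: a priori $\bigcap_i Z_{\mathbf{G}}(h_i)$ and $\mathbf{Z}$ agree as $\overline{K}$-groups, but in characteristic $0$ every finite-type group scheme is reduced (Cartier), hence the equality holds scheme-theoretically, and therefore on $R$-points for every $K$-algebra $R$.

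Now comes the rigidity step. Set $h_i':=gh_ig^{-1}\in\mathbf{G}(K)$ and consider the closed $K$-subscheme $Y:=\{x\in\mathbf{G}\mid xh_ix^{-1}=h_i'\ \text{for all }i\}$. It is nonempty because $g\in Y(A)$, hence it has an $\overline{K}$-point $x_0$. Inside $\mathbf{G}(A\otimes_K\overline{K})$ the element $z:=x_0^{-1}g$ satisfies $zh_iz^{-1}=x_0^{-1}h_i'x_0=h_i$ for all $i$, so $z\in\big(\bigcap_i Z_{\mathbf{G}}(h_i)\big)(A\otimes_K\overline{K})=\mathbf{Z}(A\otimes_K\overline{K})$. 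Thus $g=x_0z$ with $x_0\in\mathbf{G}(\overline{K})$ and $z$ central. Pushing to the quotient $\mathbf{G}\to\mathbf{G}/\mathbf{Z}$, and using that $\mathbf{Z}(R)$ is the kernel of $\mathbf{G}(R)\to(\mathbf{G}/\mathbf{Z})(R)$ for every $R$, we see that the image of $g$ in $(\mathbf{G}/\mathbf{Z})(A\otimes_K\overline{K})$ equals the image of the $\overline{K}$-point $x_0$; equivalently, $\bar g$ becomes $\overline{K}$-rational after base change along $A\hookrightarrow A\otimes_K\overline{K}$.

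It remains to descend this to $K$. Think of $\bar g$ as a $K$-algebra homomorphism $K[\mathbf{G}/\mathbf{Z}]\to A$. The previous paragraph says that after composing with $A\to A\otimes_K\overline{K}$, $a\mapsto a\otimes1$, the image of $\bar g$ lies in the subring $1\otimes\overline{K}$; but this image obviously already lies in $A\otimes1$. An elementary computation with a $K$-basis of $\overline{K}$ shows $(A\otimes1)\cap(1\otimes\overline{K})=K\cdot(1\otimes1)$ (one may assume $A\neq0$, the case $A=0$ being trivial). Hence the image of $\bar g$ is contained in $K\cdot1_A$, i.e.\ $\bar g$ factors through $K$, so $\bar g\in(\mathbf{G}/\mathbf{Z})(K)$. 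As $g\in C$ was arbitrary, this proves the theorem.

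I expect the main obstacle to be a matter of care rather than of ideas: because $A$ is an arbitrary $K$-algebra (possibly non-reduced, possibly with zero divisors) one cannot simply ``Galois-descend'' $g$, which forces the detour through $A\otimes_K\overline{K}$ and makes the scheme-theoretic identity $\bigcap_i Z_{\mathbf{G}}(h_i)=\mathbf{Z}$ — rather than merely its version on $\overline{K}$-points — the load-bearing fact. This is in essence Borel's original argument for semisimple $\mathbf{G}$, where the center was removed by passing to the adjoint group; here one keeps $\mathbf{G}$ arbitrary and works with the center scheme-theoretically.
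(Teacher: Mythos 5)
Your proof is correct (of the intended statement, whose conclusion should of course read that the image of $C$ lands in $\mathbf{G}/\mathbf Z(K)$), but it follows a genuinely different route from the paper's. The paper first proves a Hopf-algebraic lemma identifying $\mathbf{G}/\mathbf Z(K)$ inside $\mathbf{G}/\mathbf Z(A)$ as the stabilizer of $K[\mathbf{G}]\subset A[\mathbf{G}]$ under the conjugation action, and then, for $g\in C$ and $x\in K[\mathbf{G}]$, writes $gx=x_0+\alpha_1x_1+\cdots+\alpha_nx_n$ with $1,\alpha_1,\dots,\alpha_n$ linearly independent over $K$ and derives a contradiction by evaluating at a single $h\in H\cap gHg^{-1}$ with $h(x_1)\neq 0$ — this is Borel's original linear-independence trick, and it needs only the Zariski density of $H\cap gHg^{-1}$, with characteristic $0$ entering through separability/faithfulness of the conjugation representation in the lemma. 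You instead argue geometrically: Noetherian reduction to finitely many $h_i$ generating a Zariski dense subgroup, the scheme-theoretic identity $\bigcap_i Z_{\mathbf{G}}(h_i)=\mathbf Z$ (where Cartier's theorem carries the characteristic-$0$ hypothesis), a $\overline{K}$-point $x_0$ of the transporter scheme giving the factorization $g=x_0z$ with $z$ central over $A\otimes_K\overline{K}$, and finally the elementary descent $(A\otimes 1)\cap(1\otimes\overline{K})=K\cdot(1\otimes 1)$. Each step checks out, including the points one has to be careful about (injectivity of $\mathbf{G}(K)\to\mathbf{G}(A)$ for $A\neq 0$, nonemptiness of $Y$ over $\overline{K}$ from $Y(A)\neq\emptyset$, and $\mathbf Z(R)=\ker\bigl(\mathbf{G}(R)\to\mathbf{G}/\mathbf Z(R)\bigr)$ in characteristic $0$). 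The paper's argument is shorter and avoids base change to $\overline{K}$ and the use of Cartier's theorem; yours makes the role of the scheme-theoretic center completely transparent and adapts essentially verbatim to positive characteristic once $\mathbf Z$ is replaced by the scheme-theoretic center, exactly as anticipated in the paper's Remark~\ref{rem:comm}.
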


Before proving the theorem, let us recall some basic facts about the functor of points of a $K$-algebraic group.
For a $K$-algebraic group $\mathbf{G}$ and a $K$-algebra $A$ we set $\mathbf{G}(A)=\mathrm{Hom}_{K\mbox{\scriptsize -alg}}(K[\mathbf{G}],A)$,
where $K[\mathbf{G}]$ denotes the Hopf algebra of $K$-regular functions on $\mathbf{G}$.
Using the coproduct $\Delta: K[\mathbf{G}]\to K[\mathbf{G}]\otimes_K K[\mathbf{G}]$ we get a product structure on $\mathbf{G}(A)$,
by sending $x,y\in \mathbf{G}(A)$ to the map given by the following composition
\[
 K[\mathbf{G}] \overset{\Delta}{\xrightarrow{\hspace*{1cm}}} K[\mathbf{G}]\otimes_K K[\mathbf{G}] 
\overset{x\otimes y}{\xrightarrow{\hspace*{1cm}}} A\otimes_K A \overset{m}{\xrightarrow{\hspace*{1cm}}} A. 
\]
Further, $\mathbf{G}(A)$ has an identity, given by the augmentation $f\mapsto f(e)\in K<A$,
and the inverse of $x\in \mathbf{G}(A)$ is given by $x\circ S$, where $S:K[\mathbf{G}]\to K[\mathbf{G}]$ is the antipode, $Sf(g)=f(g^{-1})$.
Thus $\mathbf{G}(A)$ is a group.
The association $A\leadsto \mathbf{G}(A)$ is called {\em the functor of points} associated with $\mathbf{G}$,
see for example Waterhouse's book \cite{Waterhouse} for an account on algebraic group theory based on this point of view.

We set $A[\mathbf{G}]:=A\otimes_K K[\mathbf{G}]$.
Consider the conjugation action of $\mathbf{G}$ on itself.
This is a $K$-morphic action which gives rise to a $K$-morphic action of $\mathbf{G}/\mathbf Z$ on $\mathbf{G}$.
Consider the action map $a:\mathbf{G}/\mathbf Z\times \mathbf{G}\to\mathbf{G}$.
For every $g\in \mathbf{G}/\mathbf Z(A)$ the map
\[  K[\mathbf{G}] \overset{a^*}{\xrightarrow{\hspace*{1cm}}} K[\mathbf{G}/\mathbf Z]\otimes_K K[\mathbf{G}]
\overset{g\otimes\id}{\xrightarrow{\hspace*{1cm}}} A\otimes_K K[\mathbf{G}] = A[\mathbf{G}]
\]
extends uniquely to a $A$-algebra automorphism $A[\mathbf{G}]\to A[\mathbf{G}]$.
By this we define an action of $\mathbf{G}/\mathbf Z(A)$ on $A[\mathbf{G}]$ by $A$-algebra automorphisms.

\begin{lemma} \label{lem:comm}
Let $K$ be a field of characteristic 0.
Let $\mathbf{G}$ be an affine $K$-algebraic group and 
$\mathbf Z$ its center.
Let $A$ be a $K$-algebra.
Consider the conjugation representation of $\mathbf{G}/\mathbf Z(A)$ on $A[\mathbf{G}]$.
Considering $K[\mathbf{G}]$ as a subspace of $A[\mathbf{G}]$ and $\mathbf{G}/\mathbf Z(K)$ as a subgroup of $\mathbf{G}/\mathbf Z(A)$,
we have 
\[ \mathbf{G}/\mathbf Z(K)=\{g\in\mathbf{G}/\mathbf Z(A)~|~g(K[\mathbf{G}])\subset K[\mathbf{G}]\}.\]
\end{lemma}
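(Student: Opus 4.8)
The plan is to prove the two inclusions separately. The inclusion $\mathbf{G}/\mathbf{Z}(K)\subseteq\{g\in\mathbf{G}/\mathbf{Z}(A)\mid g(K[\mathbf{G}])\subseteq K[\mathbf{G}]\}$ is essentially formal: if $g$ comes from a $K$-point, then the composite $K[\mathbf{G}]\xrightarrow{a^*}K[\mathbf{G}/\mathbf{Z}]\otimes_K K[\mathbf{G}]\xrightarrow{g\otimes\id}K\otimes_K K[\mathbf{G}]=K[\mathbf{G}]$ already lands in $K[\mathbf{G}]$ before any base change to $A$, and the resulting automorphism of $A[\mathbf{G}]$ is the $A$-linear extension of an automorphism of $K[\mathbf{G}]$; so it certainly preserves the subspace $K[\mathbf{G}]$. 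I would write this out in one or two sentences using the factorization of $a^*$.

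For the reverse inclusion, suppose $g\in\mathbf{G}/\mathbf{Z}(A)$ satisfies $g(K[\mathbf{G}])\subseteq K[\mathbf{G}]$; I want to produce a $K$-algebra homomorphism $K[\mathbf{G}/\mathbf{Z}]\to K$ realizing $g$. The key point is that $g$, by definition of the conjugation action, is determined by the $A$-algebra map $\theta_g\colon K[\mathbf{G}]\to A[\mathbf{G}]$, $\theta_g=(g\otimes\id)\circ a^*$, and the hypothesis says $\theta_g$ actually factors through $K[\mathbf{G}]\hookrightarrow A[\mathbf{G}]$, giving a $K$-algebra endomorphism $\bar\theta_g\colon K[\mathbf{G}]\to K[\mathbf{G}]$ — in fact an automorphism, since $g$ is invertible in the group $\mathbf{G}/\mathbf{Z}(A)$ and $g^{-1}$ satisfies the same hypothesis (its action is the inverse automorphism, which must also restrict). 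Thus $g$ induces a $K$-algebraic automorphism $\sigma$ of $\mathbf{G}$ which agrees, after base change to $A$, with conjugation by a lift of $g$. Now I would compare $\sigma$ with the conjugation action: the map $\mathbf{G}/\mathbf{Z}\to\Aut(\mathbf{G})$ given by conjugation is a $K$-morphism (a closed immersion of algebraic groups, since $\mathbf{G}$ is affine and $\mathbf{Z}$ is exactly the kernel of conjugation), so the $A$-point $g$ of $\mathbf{G}/\mathbf{Z}$ maps to the $A$-point ``conjugation by $g$'' of $\Aut(\mathbf{G})$, and the content of the hypothesis is that this $A$-point of $\Aut(\mathbf{G})$ is defined over $K$, i.e.\ lies in $\Aut(\mathbf{G})(K)$. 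Pulling back along the closed immersion $\mathbf{G}/\mathbf{Z}\hookrightarrow\Aut(\mathbf{G})$, which is defined over $K$, forces $g$ to lie in $(\mathbf{G}/\mathbf{Z})(K)$. Concretely, the condition $\theta_g(K[\mathbf{G}])\subseteq K[\mathbf{G}]$ says that the composite $K[\mathbf{G}/\mathbf{Z}]\xrightarrow{a^*}K[\mathbf{G}/\mathbf{Z}]\otimes K[\mathbf{G}]$ followed by $g\otimes\id$ must be compatible with the $K$-structure, and chasing this through the Hopf-algebra identities (using that $a^*$ is the coaction) shows $g\colon K[\mathbf{G}/\mathbf{Z}]\to A$ has image in $K$.

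The main obstacle, I expect, is the bookkeeping in this last step: making precise the claim that ``$g$ acting $K$-rationally on $K[\mathbf{G}]$'' forces ``$g\colon K[\mathbf{G}/\mathbf{Z}]\to A$ to factor through $K$.'' The cleanest route is probably the functorial one: use that $\mathbf{G}/\mathbf{Z}$ embeds $K$-rationally into the (affine) automorphism scheme — or at least into $\GL(V)$ for $V$ a faithful finite-dimensional $\mathbf{G}/\mathbf{Z}$-submodule of $K[\mathbf{G}]$, which exists since $K[\mathbf{G}]$ is a union of finite-dimensional $\mathbf{G}$-stable subspaces and $\mathbf{Z}$ acts trivially on a suitable such subspace by faithfulness of the adjoint-type action — and observe that the hypothesis says precisely that the image of $g$ in $\GL(V)(A)$ lies in $\GL(V)(K)$, whence $g$ itself lies in $(\mathbf{G}/\mathbf{Z})(K)$ because $(\mathbf{G}/\mathbf{Z})(K)=(\mathbf{G}/\mathbf{Z})(A)\cap\GL(V)(K)$ inside $\GL(V)(A)$. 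I would present the argument in that form, flagging that characteristic~$0$ is used to guarantee $\mathbf{G}/\mathbf{Z}$ is smooth and that the conjugation representation separates points of $\mathbf{G}/\mathbf{Z}$.
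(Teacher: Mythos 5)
Your ``cleanest route'' is essentially the paper's own proof: choose a finite-dimensional subcomodule $V<K[\mathbf{G}]$ of the conjugation coaction (the paper takes one that generates $K[\mathbf{G}]$ as a $K$-algebra, which makes faithfulness automatic and also yields the converse implication $gV\subset V\Rightarrow g(K[\mathbf{G}])\subset K[\mathbf{G}]$), use characteristic $0$ to see that $\mathbf{G}/\mathbf Z\to\GL(V)$ is a $K$-isomorphism onto a closed $K$-subgroup, and conclude via $\mathbf{H}(K)=\mathbf{H}(A)\cap\GL_n(K)$. The preliminary detour through the automorphism scheme and the claim that $g^{-1}$ ``satisfies the same hypothesis'' are not justified as stated, but they are superseded by the finite-dimensional argument you settle on, so this is the same proof.
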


\begin{proof}
Consider $K[\mathbf{G}/\mathbf Z]$ as a Hopf algebra and consider the comodule structure on $K[\mathbf{G}]$ associated with the conjugation representation.
By \cite[3.3]{Waterhouse}, $K[\mathbf{G}]$ is a direct union of finite dimensional subcomodules.
We let $V<K[\mathbf{G}]$ be a finite dimensional subcomodule which generates $K[\mathbf{G}]$ as a $K$-algebra
($K[\mathbf{G}]$ is a finitely generated $K$-algebra).
By the correspondence between subcomodules and subrepresentations, \cite[3.2]{Waterhouse}, 
$V$ is a representation of $\mathbf{G}/\mathbf Z$, in the sense that 
for every $K$-algebra $B$, 
$B\otimes_K V$ is a $\mathbf{G}/\mathbf Z(B)$-invariant subspace of $B[\mathbf{G}]$,
and we obtain a natural family of group homomorphisms $\mathbf{G}/\mathbf Z(B)\to \Aut_{B}(B\otimes_K V)$.
Considering the action of $\mathbf{G}/\mathbf Z(B)$ on $B[\mathbf{G}]$, it follows that the left hand side is  included in the right hand side in 
the following equation:
\begin{equation} \label{some-eq}
\{g\in\mathbf{G}/\mathbf Z(B)~|~g(K[\mathbf{G}])\subset K[\mathbf{G}]\} = \{g\in\mathbf{G}/\mathbf Z(B)~|~gV\subset V\},
\end{equation}
the other inclusion holds as the $K$-algebra generated in $B[\mathbf{G}]$ by $V$ is $K[\mathbf{G}]$.

We now take $B=\bar{K}$, an algebraic closure of $K$.
By definition of $\mathbf Z$ \cite[1.7]{borel-book}, $\mathbf{G}/\mathbf Z(\bar{K})$ acts faithfully on $\mathbf{G}(\bar{K})$.
It follows that its action on $\bar{K}[\mathbf{G}]$ is faithful as well,
and by the fact that $\bar{K}\otimes_K V$ generates $\bar{K}[\mathbf{G}]$ as a $\bar{K}$-algebra, we get that $\mathbf{G}/\mathbf Z(\bar{K})$ 
acts faithfully on $\bar{K}\otimes_K V$.
Thus the morphism $\mathbf{G}/\mathbf Z(\bar{K})\to \GL(\bar{K}\otimes_K V)$ is injective.
By \cite[Cor~1.4(a)]{borel-book} its image is a Zariski closed $K$-subgroup of $\GL(\bar{K}\otimes_K V)$,
and by \cite[Proposition~6.4]{borel-book} this morphism is a $K$-isomorphism onto its image since it is separable 
by the characteristic 0 assumption on $K$.

As $K$-vector spaces, $V\cong K^n$. Thus we may identify $\mathbf{G}/\mathbf Z$ with a $K$-algebraic subgroup $\mathbf{H}<\GL_n$.
As subgroups of $\GL_n(A)$ we have $\mathbf{H}(K)=\mathbf{H}(A)\cap\GL_n(K)$. 
We conclude the first equality in
\[ \mathbf{H}(K)=\{g\in \mathbf{H}(A)~|~g(K^n)=K^n\}=\{g\in \mathbf{H}(A)~|~g(K^n)\subset K^n\}. \]
The second equality is a triviality since every $g$ is regular. 
We thus obtain
\[ \mathbf{G}/\mathbf Z(K)=\{g\in \mathbf{G}/\mathbf Z(A)~|~gV\subset V\}. \]
The proof is now complete by Equation~(\ref{some-eq}).
\end{proof}

\begin{proof}[Proof of Theorem~\ref{thm:comm}]
Let $g\in C$ be an element. We show that its image in $\mathbf{G}/\mathbf Z(A)$ -- denoted also by $g$ -- lies in $\mathbf{G}/\mathbf Z(K)$.

We consider the conjugation action of $\mathbf{G}(A)$ on $A[\mathbf{G}]$
and claim that $gK[\mathbf{G}]\subset K[\mathbf{G}]$.
By Lemma~\ref{lem:comm} this suffices. 
Let $x\in K[\mathbf{G}]$. 
We write 
\[ gx=x_0+\alpha_1x_1+\cdots +\alpha_nx_n, \quad \alpha_i\in A,~x_i\in K[\mathbf{G}], \]
where $1,\alpha_1,\ldots,\alpha_n$ are independent over $K$
and $x_i\neq 0$.
If $n=0$ then indeed $gx=x_0\in K[\mathbf{G}]$.

We assume $n\geq 1$ and argue by contradiction.
Note that $H\cap gHg^{-1}<H$ is of finite index, thus $H\cap gHg^{-1}$ is Zariski dense in $\mathbf{G}(K)$ since $\mathbf{G}$ is connected.
Thus
we can find $h\in H\cap gHg^{-1}<\mathbf{G}(K)=\mathrm{Hom}_{K\mbox{\scriptsize -alg}}(K[\mathbf{G}],K)$ such that $h(x_1)\neq 0$.
From 
\[ 
	h(x_0)+\alpha_1h(x_1)+\cdots +\alpha_nh(x_n)=h(gx)=(g^{-1}hg)(x) \in K 
\]
we get a non-trivial linear combination of $1,\alpha_1,\ldots,\alpha_n$ over $K$. This gives the desired contradiction. 
\end{proof}

\begin{remark} \label{rem:comm}
Neither Theorem~\ref{thm:comm} nor Lemma~\ref{lem:comm} are correct as stated if $K$ has positive characteristic.
Take for example $K$ to be the algebraic closure of $\mathbb F_2$, $\mathbf{G}=\SL_2$,
and $A=K[t]/(t^2)$.
Then $\mathbf{G}(K)=\SL_2(K)$ has trivial center $\mathbf Z$, thus $\mathbf{G}/\mathbf Z=\mathbf{G}$.
However, $\mathbf{G}/\mathbf Z(A)= \mathbf{G}(A)\cong \SL_2(K)\ltimes \mathfrak{sl}_2(K)$ has a non-trivial center
$L$ corresponding to the scalar matrices in $\mathfrak{sl}_2(K)$.
Then $L$ is not contained in $\mathbf{G}/\mathbf Z(K)$, but
\[ L<\{g\in\mathbf{G}/\mathbf Z(A)~|~g(K[\mathbf{G}])<K[\mathbf{G}]\}, \]
and for any $H$ in $\mathbf{G}(K)$, $L$ commensurates $H$ in $\mathbf{G}/\mathbf Z(A)$.

Correct statements are obtained by replacing the smooth subgroup $\mathbf Z$ with a scheme theoretically defined center $\mathcal{Z}$. 
For example, for $\mathbf{G}=\SL_2$, regardless the field of definition, $\mathbf{G}/\mathcal{Z}\cong \PGL_2$.
In characteristic 0 we always have $\mathcal{Z}=\mathbf Z$.
The proof of the corrected statement of the theorem follows formally from the corrected lemma.
We will not give the proof of the corrected lemma 
but merely remark that it is verbatim the proof above, 
upon replacing the notion of ``faithfulness of an action" by a scheme theoretical analogue.
Moreover, there is no need to pass to an algebraic closure anymore.

Working with the scheme theoretical center has advantages even if one is concerned only with the classical setting.
For example, if $K$ is reduced then $\mathbf{G}/\mathcal{Z}(K)=\mathbf{G}/\mathbf Z(K)$.
To see this one observes that the algebra representing $\mathbf{G}/\mathcal{Z}$, modulo its nil-radical,
equals the algebra representing $\mathbf{G}/\mathbf Z$.
\end{remark}

The following application of Theorem~\ref{thm:comm} is needed in Section~\ref{sec: Margulis commensurator rigidity}. 

\begin{theorem} \label{app:comm}
We retain the setup in Subsection~\ref{sub: intro-notation}. 
Let $\mathbf{H}$ be a connected adjoint semi-simple $K$-algebraic group. 
Assume in 
addition that $\mathbf{H}(\calO[S])$ is Zariski dense in $\mathbf{H}$. 
Then the commensurator of $\mathbf{H}(\calO[S])$ in $\mathbf{H}_S$ (via the diagonal embedding) is exactly $\mathbf{H}(K)$.
\end{theorem}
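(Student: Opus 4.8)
The plan is to derive the statement from the ``revisited Borel'' theorem, Theorem~\ref{thm:comm}, only the reverse inclusion requiring a separate elementary argument. The first move is to pass to the functor of points. Write $\bbA_S$ for the ring of $S$-adeles of $K$, i.e.\ the restricted product ${\prod_{\nu\in S}}'K_\nu$ with respect to the subrings $\calO_\nu$; since every element of $K$ lies in $\calO_\nu$ for almost all $\nu$, the diagonal makes $\bbA_S$ into a commutative $K$-algebra. Having fixed the embedding $\mathbf{H}<\GL_N$ that defines the integral structures, unwinding the functor of points identifies a $K$-algebra homomorphism $K[\mathbf{H}]\to\bbA_S$ with a family $(g_\nu)_{\nu\in S}$, $g_\nu\in\mathbf{H}(K_\nu)$, such that $g_\nu\in\mathbf{H}(\calO_\nu)$ for almost every $\nu$; thus $\mathbf{H}(\bbA_S)\cong\mathbf{H}_S$ as groups, compatibly with the two diagonal copies of $\mathbf{H}(K)$. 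Finally, since $\mathbf{H}$ is adjoint and $K$ has characteristic zero, the center $\mathbf{Z}$ of $\mathbf{H}$ is trivial, so $\mathbf{H}/\mathbf{Z}=\mathbf{H}$ and the canonical map $\mathbf{H}(A)\to(\mathbf{H}/\mathbf{Z})(A)$ is the identity for every $K$-algebra $A$.

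Next I would apply Theorem~\ref{thm:comm} with $\mathbf{G}=\mathbf{H}$, with the Zariski dense subgroup $H=\mathbf{H}(\calO[S])<\mathbf{H}(K)$ (Zariski density being our hypothesis), and with $A=\bbA_S$. Its conclusion is that the image of the commensurator $C$ of $H$ in $(\mathbf{H}/\mathbf{Z})(\bbA_S)$ is contained in $(\mathbf{H}/\mathbf{Z})(K)$; since $\mathbf{H}/\mathbf{Z}=\mathbf{H}$ the relevant map is the identity, so $C\subseteq\mathbf{H}(K)$, and after the identifications of the previous paragraph $C=\Commen_{\mathbf{H}_S}(\mathbf{H}(\calO[S]))$. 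The one point calling for care is that the isomorphism $\mathbf{H}_S\cong\mathbf{H}(\bbA_S)$ be canonical enough to respect both the diagonal embedding of $\mathbf{H}(K)$ and the integral structures at almost all places — hence independent of the auxiliary embedding into $\GL_N$ — so that Theorem~\ref{thm:comm}'s assertion about $\mathbf{G}(A)$ translates verbatim; granting this, one inclusion is done.

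For the reverse inclusion I would check directly that each $g\in\mathbf{H}(K)$ commensurates $\mathbf{H}(\calO[S])$, working inside $\GL_N$. Since $\mathbf{H}(\calO[S])=\mathbf{H}(K)\cap\GL_N(\calO[S])$, it suffices to show $\GL_N(K)$ commensurates $\GL_N(\calO[S])$: choose $0\neq d\in\calO[S]$ with $dg$ and $dg^{-1}$ in $\mathrm{M}_N(\calO[S])$; then conjugation by $g$ carries the level-$d^2$ principal congruence subgroup of $\GL_N(\calO[S])$ into $\GL_N(\calO[S])$, and this congruence subgroup has finite index because $\calO[S]/d^2\calO[S]$ is finite (it is supported at the finitely many primes dividing $d$, at which $\calO[S]$ localizes to a discrete valuation ring). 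Intersecting with $\mathbf{H}(K)$ and repeating the argument for $g^{-1}$ shows $g\in\Commen_{\mathbf{H}_S}(\mathbf{H}(\calO[S]))$. Combining the two inclusions yields the asserted equality. The main obstacle is thus purely the bookkeeping in the first paragraph, identifying $\mathbf{H}_S$ with $\mathbf{H}(\bbA_S)$ precisely enough to invoke Theorem~\ref{thm:comm}; the rest is formal.
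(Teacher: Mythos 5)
Your proposal is correct, and for the substantive inclusion it follows exactly the paper's route: identify $\mathbf{H}_S$ with $\mathbf{H}(A)$ for $A$ the $K$-algebra of $S$-adeles (the paper cites Platonov--Rapinchuk for this identification rather than unwinding it) and apply Theorem~\ref{thm:comm}, using that the adjoint group $\mathbf{H}$ has trivial center in characteristic zero. The only divergence is in the easy inclusion $\mathbf{H}(K)\subseteq\Commen_{\mathbf{H}_S}(\mathbf{H}(\calO[S]))$. The paper obtains it softly: $\mathbf{H}(\calO[S])$ is the preimage of the compact open subgroup $\prod_{\nu\in\calV^\fin-S}\mathbf{H}(\calO_\nu)$ under the diagonal map of $\mathbf{H}(K)$ into the restricted product over the places \emph{outside} $S$, and preimages of compact open subgroups under homomorphisms to locally compact groups are automatically commensurated (Lemma~\ref{L:a-normal}). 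You instead argue by hand with principal congruence subgroups of level $d^2$, where $d$ clears the denominators of $g$ and $g^{-1}$; this is also correct (the conjugate of $I+d^2m$ lands in $\mathrm{M}_N(\calO[S])$ and has unit determinant, and $\calO[S]/d^2\calO[S]$ is finite), and has the merit of being self-contained, at the cost of redoing an argument the paper packages once and reuses. The point you flag about the identification $\mathbf{H}_S\cong\mathbf{H}(A)$ being independent of the chosen embedding $\mathbf{H}<\GL_N$ is harmless: changing the embedding alters $\mathbf{H}(\calO_\nu)$ at only finitely many places, which does not affect the restricted product.
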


\begin{remark}
The subgroup $\mathbf{H}(\calO[S])<\mathbf{H}(K)$ depends on a choice of 
embedding $\mathbf{H}<\GL_n$ but not its commensurability class. Since $\mathbf{H}$ is connected its Zariski-density is unambiguous.
\end{remark}

\begin{proof}
Consider the locally compact totally disconnected group given by the restricted product
\[
	{\prod}^{'}_{\nu\in \calV-\calV^\infty-S} \mathbf{H}(K_\nu),
\] 
which is taken with respect to the compact 
open subgroups $\mathbf{H}(\calO_\nu)<\mathbf{H}(K_\nu)$.
Then 
\[
	{\prod}_{\nu\in \calV-\calV^\infty-S} \mathbf{H}(\calO_\nu)
\] 
is a compact open subgroup, hence it is commensurated.
Its preimage under the diagonal embedding 
\[
	\mathbf{H}(K)\to {\prod}^{'}_{\nu\in \calV-\calV^\infty-S}  \mathbf{H}(K_\nu)
\]
is $\mathbf{H}(\calO[S])$.
By Lemma~\ref{L:a-normal}, $\mathbf{H}(\calO[S])$ is commensurated by $\mathbf{H}(K)$.

We need to show that the commensurator group of $\mathbf{H}(\calO[S])$ 
inside $\mathbf{H}_S$ is contained in the diagonal image of $\mathbf{H}(K)$.
Let $A$ be the $K$-algebra of $S$-adeles, that is, the restricted product $\prod'_{\nu\in S} K_\nu$ 
taken with respect to the subgroups $\calO_\nu<K_\nu$.
In \cite[pp.~ 249-250]{Platonov+Rapinchuk} the identification $\mathbf{H}(A)\cong \mathbf{H}_S$ is explained.
Under this identification, $\mathbf{H}(K)$ is identified with its diagonal image.
Now apply Theorem~\ref{thm:comm} and use the assumption that $\mathbf{H}$ is adjoint, hence has a trivial center.
\end{proof}

For the special case where $S$ is a set of archimedean places we get the following.

\begin{theorem} \label{thm:appcomm}
We retain the setup in Subsection~\ref{sub: intro-notation}. 
Let $S\subset \calV^{\infty}$ be a set of archimedean places. 
Let $\mathbf{H}$ be a connected adjoint semi-simple $K$-algebraic group. 
Assume that there exists $\nu\in \calV^\infty$ such that the Lie group $\mathbf{H}(K_\nu)$ is not compact.
Then the commensurator of $\mathbf{H}(\calO)$ in $\prod_{\nu\in S}\mathbf{H}(K_\nu)$ is exactly $\mathbf{H}(K)$.
\end{theorem}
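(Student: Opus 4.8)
The plan is to obtain this as the special case $S\subset\calV^\infty$ of Theorem~\ref{app:comm}. Because $S$ contains no finite place we have $\calV^\fin-S=\calV^\fin$, so the ring of $S$-integers $\calO[S]$ coincides with the ring of integers $\calO$; and because $\calV^\infty$ is finite, the truncated $S$-adele group $\mathbf{H}_S$ is the ordinary finite product $\prod_{\nu\in S}\mathbf{H}(K_\nu)$. With these identifications the assertion is precisely the conclusion of Theorem~\ref{app:comm} for this $S$, so it remains only to check the standing hypothesis of that theorem: that $\mathbf{H}(\calO[S])=\mathbf{H}(\calO)$ is Zariski dense in $\mathbf{H}$.

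To verify the Zariski density I would argue via Borel's density theorem, which is the one place the noncompactness assumption is used. By reduction theory the diagonal image of $\mathbf{H}(\calO)$ is a lattice in $\mathbf{H}_{\calV^\infty}=\prod_{\nu\in\calV^\infty}\mathbf{H}(K_\nu)$. Discarding the archimedean factors on which $\mathbf{H}$ is anisotropic (these are compact), the hypothesis that $\mathbf{H}(K_{\nu_0})$ is noncompact for some $\nu_0\in\calV^\infty$ guarantees that the product of the remaining factors is nontrivial and that the projection of $\mathbf{H}(\calO)$ to it is again a lattice; Borel's density theorem then shows this projection, hence $\mathbf{H}(\calO)$, is Zariski dense in the corresponding product of $K$-simple factors of $\mathbf{H}$. (In the situations in which this corollary is applied $\mathbf{H}$ is almost $K$-simple, so this product is all of $\mathbf{H}$; more generally one should read the statement under the assumption that no $K$-simple factor of $\mathbf{H}$ is archimedean-anisotropic, which is exactly what makes $\mathbf{H}(\calO)$ Zariski dense.) Once this is in place, Theorem~\ref{app:comm} applies verbatim and gives $\Commen_{\mathbf{H}_S}(\mathbf{H}(\calO))=\mathbf{H}(K)$.

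Thus the proof is essentially a reduction; the only mathematical content beyond Theorem~\ref{app:comm} is the Zariski-density statement, which I expect to be the main (though short) obstacle and the sole use of the noncompactness hypothesis.
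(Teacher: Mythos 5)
Your proposal is correct and takes essentially the same route as the paper: both reduce the statement to Theorem~\ref{app:comm} (using $\calO[S]=\calO$ and $\mathbf{H}_S=\prod_{\nu\in S}\mathbf{H}(K_\nu)$) and then verify the Zariski density of $\mathbf{H}(\calO)$ by a Borel density argument, the paper doing the latter by quoting Borel's criterion \cite{Borel-density}*{Theorem~3(a)} together with the identification $R_{K/\bbQ}\mathbf{H}(\bbR)\cong\prod_{\nu\in\calV^\infty}\mathbf{H}(K_\nu)$, rather than re-deriving it from reduction theory and the lattice form of the density theorem as you do. Your caveat about $K$-simple factors that are anisotropic at every archimedean place is well taken (the paper's paraphrase of Borel's criterion glosses over the same point), but in the applications $\mathbf{H}$ is absolutely simple, so the issue does not arise.
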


\begin{proof}
This follows from Theorem~\ref{app:comm}, once we show that $\mathbf{H}(\calO)$ is Zariski dense in $\mathbf{H}$. 
By \cite[Theorem~3(a)]{Borel-density} $\mathbf{H}(\calO)$ is Zariski dense if and only if $R_{K/\bbQ}\mathbf{H}(\bbR)$ is non-compact 
where $R_{K/\bbQ}$ denotes the restriction of scalars from $K$ to $\bbQ$. 
By applying~\cite[Theorem~I.3.1.4(i)]{Margulis:book} to the real place,
\[ 
	R_{K/\bbQ}\mathbf{H}(\bbR) \cong \prod_{\nu\in V^\infty}  \mathbf{G}(K_\nu). 
\]
The right hand side is non-compact by assumption. 
\end{proof}

In view of Remark~\ref{rem:comm}, Theorem~\ref{app:comm}
has the following analogue in positive characteristic.
The proof is the same as of Theorem~\ref{app:comm},
taking into account that for adjoint groups the scheme theoretical center is trivial.

\begin{theorem}
Let $K$ be field of rational functions over a finite field, $\calO$ its ring of polynomials 
and $\calV$ the set of places of $k$.
We fix a subset $S\subset \calV$.
Let $\mathbf{H}$ be a connected adjoint semi-simple $K$-algebraic group.
We assume that $\mathbf{H}(\calO[S])$ is Zariski dense in $\mathbf{H}$.
Then the commensurator of $\mathbf{H}(\calO[S])$ in $\prod'_{\nu\in S}\mathbf{H}(K_\nu)$ is exactly $\mathbf{H}(K)$.
\end{theorem}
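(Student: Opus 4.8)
The plan is to run the proof of Theorem~\ref{app:comm} essentially verbatim, the only change being to replace its characteristic-zero ingredient, Theorem~\ref{thm:comm}, by the scheme-theoretic correction described in Remark~\ref{rem:comm}. Throughout, the role played in the number-field case by the archimedean places is now played by the finite set of places at which the elements of $\calO$ are allowed to have poles; for $K=\mathbb{F}_q(T)$ and $\calO=\mathbb{F}_q[T]$ this is the single place at infinity.

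First I would establish that $\mathbf{H}(K)$ is contained in the commensurator of $\mathbf{H}(\calO[S])$ in $\prod'_{\nu\in S}\mathbf{H}(K_\nu)$. Form the restricted product $\prod'_\nu\mathbf{H}(K_\nu)$ over the places $\nu\notin S$ at which the elements of $\calO[S]$ are integral, taken with respect to the compact open subgroups $\mathbf{H}(\calO_\nu)$; then $\prod_\nu\mathbf{H}(\calO_\nu)$ is compact open, hence commensurated, and its preimage under the diagonal embedding $\mathbf{H}(K)\to\prod'_\nu\mathbf{H}(K_\nu)$ is exactly $\mathbf{H}(\calO[S])$, so $\mathbf{H}(\calO[S])$ is commensurated by $\mathbf{H}(K)$ by Lemma~\ref{L:a-normal}. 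For the reverse inclusion I would pass to the $K$-algebra $A$ of $S$-adeles, i.e.\ the restricted product $\prod'_{\nu\in S}K_\nu$ with respect to the subrings $\calO_\nu$, and use the identification $\mathbf{H}(A)\cong\prod'_{\nu\in S}\mathbf{H}(K_\nu)$ from \cite{Platonov+Rapinchuk} (under which $\mathbf{H}(K)$ is its diagonal image). Letting $C$ be the commensurator of $\mathbf{H}(\calO[S])$ in $\mathbf{H}(A)$ and using that $\mathbf{H}(\calO[S])$ is Zariski dense in the connected group $\mathbf{H}$ by hypothesis, the correction of Theorem~\ref{thm:comm} gives that the image of $C$ in $(\mathbf{H}/\calZ)(A)$ lies in $(\mathbf{H}/\calZ)(K)$, where $\calZ$ denotes the scheme-theoretic center.

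The only point that genuinely needs positive-characteristic care, and the step I expect to be the main (though mild) obstacle, is the legitimacy of this last appeal. Theorem~\ref{thm:comm} is stated only in characteristic $0$, but by Remark~\ref{rem:comm} its proof --- that is, the proof of Lemma~\ref{lem:comm} with ``faithfulness of an action'' read scheme-theoretically and with no passage to an algebraic closure --- is valid over an arbitrary field once $\mathbf{Z}$ is replaced by $\calZ$; I would either carry out this verification or cite it from Remark~\ref{rem:comm}. Since $\mathbf{H}$ is adjoint, $\calZ$ is the trivial group scheme, so $\mathbf{H}/\calZ=\mathbf{H}$, whence $(\mathbf{H}/\calZ)(A)=\mathbf{H}(A)$ and $(\mathbf{H}/\calZ)(K)=\mathbf{H}(K)$. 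Therefore $C$ is contained in the diagonal image of $\mathbf{H}(K)$, and together with the first inclusion the commensurator of $\mathbf{H}(\calO[S])$ in $\prod'_{\nu\in S}\mathbf{H}(K_\nu)$ equals $\mathbf{H}(K)$.
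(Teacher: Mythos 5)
Your proposal is correct and follows the paper's intended argument exactly: the paper proves this theorem by declaring it "the same as the proof of Theorem~\ref{app:comm}, taking into account that for adjoint groups the scheme-theoretic center is trivial," which is precisely the substitution of the Remark~\ref{rem:comm} correction for Theorem~\ref{thm:comm} that you describe. You also correctly identify the one genuinely positive-characteristic point (the scheme-theoretic version of Lemma~\ref{lem:comm}) and the adjustment of the "archimedean" role to the place at infinity of $\mathbb{F}_q[T]$.
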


\end{appendix}

\begin{bibdiv}
\begin{biblist}

\bib{BFS:CRAS}{article}{
   author={Bader, Uri},
   author={Furman, Alex},
   author={Sauer, Roman},
   title={On the structure and arithmeticity of lattice envelopes},
   journal={C. R. Math. Acad. Sci. Paris},
   volume={353},
   date={2015},
   number={5},
   pages={409--413},
}

\bib{BFS:envelopes}{article}{
   author={Bader, Uri},
   author={Furman, Alex},
   author={Sauer, Roman}
   title={Lattice envelopes}
   note={In preparation}
}

\bib{Borel-density}{article}{
    AUTHOR = {Borel, Armand},
     TITLE = {Density and maximality of arithmetic subgroups},
   JOURNAL = {J. Reine Angew. Math.},
  FJOURNAL = {Journal f\"ur die Reine und Angewandte Mathematik},
    VOLUME = {224},
      YEAR = {1966},
     PAGES = {78--89},
}

\bib{borel-book}{book}{
   author={Borel, Armand},
   title={Linear algebraic groups},
   series={Graduate Texts in Mathematics},
   volume={126},
   edition={2},
   publisher={Springer-Verlag, New York},
   date={1991},
   pages={xii+288},
}
\bib{borel+tits-abstract}{article}{
   author={Borel, Armand},
   author={Tits, Jacques},
   title={Homomorphismes ``abstraits'' de groupes alg\'ebriques simples},
   language={French},
   journal={Ann. of Math. (2)},
   volume={97},
   date={1973},
   pages={499--571},
}
		
\bib{bourbaki}{book}{
   author={Bourbaki, Nicolas},
   title={General topology. Chapters 5--10},
   series={Elements of Mathematics (Berlin)},
   note={Translated from the French;
   Reprint of the 1989 English translation},
   publisher={Springer-Verlag, Berlin},
   date={1998},
   pages={iv+363},
}

\bib{Caprace+Monod:kacmoody}{article}{
   author={Caprace, Pierre-Emmanuel},
   author={Monod, Nicolas},
   title={A lattice in more than two Kac-Moody groups is arithmetic},
   journal={Israel J. Math.},
   volume={190},
   date={2012},
   pages={413--444},
}

 \bib{Caprace+Monod:I}{article}{
    author={Caprace, P.-E.},
    author={Monod, N.},
    title={Isometry groups of non-positively curved spaces: discrete
    subgroups},
    journal={J. Topol.},
    volume={2},
    date={2009},
    number={4},
    pages={701--746},
 }
		
%

\bib{Margulis:1974:ICM}{article}{
   author={Margulis, G. A.},
   title={Discrete groups of motions of manifolds of nonpositive curvature},
   language={Russian},
   conference={
      title={Proceedings of the International Congress of Mathematicians
      (Vancouver, B.C., 1974), Vol. 2},
   },
   book={
      publisher={Canad. Math. Congress, Montreal, Que.},
   },
   date={1975},
   pages={21--34},
}

\bib{Margulis:arith}{article}{
   author={Margulis, G. A.},
   title={Arithmeticity of the irreducible lattices in the semisimple groups
   of rank greater than $1$},
   journal={Invent. Math.},
   volume={76},
   date={1984},
   number={1},
   pages={93--120},
}

\bib{Margulis:book}{book}{
      author={Margulis, G. A.},
       title={Discrete subgroups of semisimple {L}ie groups},
      series={Ergebnisse der Mathematik und ihrer Grenzgebiete (3) [Results in
  Mathematics and Related Areas (3)]},
   publisher={Springer-Verlag},
     address={Berlin},
        date={1991},
      volume={17},
}



%
%

\bib{Platonov+Rapinchuk}{book}{
   author={Platonov, Vladimir},
   author={Rapinchuk, Andrei},
   title={Algebraic groups and number theory},
   series={Pure and Applied Mathematics},
   volume={139},
   publisher={Academic Press, Inc., Boston, MA},
   date={1994},
   pages={xii+614},
}
\bib{oh-adelic}{article}{
   author={Oh, Hee},
   title={Adelic version of Margulis arithmeticity theorem},
   journal={Math. Ann.},
   volume={321},
   date={2001},
   number={4},
   pages={789--815},
}

\bib{Prasad}{article}{
   author={Prasad, Gopal},
   title={Elementary proof of a theorem of Bruhat-Tits-Rousseau and of a
   theorem of Tits},
   language={English, with French summary},
   journal={Bull. Soc. Math. France},
   volume={110},
   date={1982},
   number={2},
   pages={197--202},
}
		
%

\bib{Raghunathan:book}{book}{
   author={Raghunathan, M. S.},
   title={Discrete subgroups of Lie groups},
   note={Ergebnisse der Mathematik und ihrer Grenzgebiete, Band 68},
   publisher={Springer-Verlag, New York-Heidelberg},
   date={1972},
   pages={ix+227},
}
\bib{tits}{article}{
   author={Tits, J.},
   title={Reductive groups over local fields},
   conference={
      title={Automorphic forms, representations and $L$-functions},
      address={Proc. Sympos. Pure Math., Oregon State Univ., Corvallis,
      Ore.},
      date={1977},
   },
   book={
      series={Proc. Sympos. Pure Math., XXXIII},
      publisher={Amer. Math. Soc., Providence, R.I.},
   },
   date={1979},
   pages={29--69},
}

\bib{Waterhouse}{book}{
    AUTHOR = {Waterhouse, William C.},
     TITLE = {Introduction to affine group schemes},
    SERIES = {Graduate Texts in Mathematics},
    VOLUME = {66},
 PUBLISHER = {Springer-Verlag, New York-Berlin},
      YEAR = {1979},
     PAGES = {xi+164},
}

\end{biblist}
\end{bibdiv}

\end{document}